\numberwithin{equation}{section}
\newtheorem{theoremcounter}{theoremcounter}[section]
\newtheorem{corollary}[theoremcounter]{Corollary}
\newtheorem{definition}[theoremcounter]{Definition}
\newtheorem{example}[theoremcounter]{Example}
\newtheorem{lemma}[theoremcounter]{Lemma}
\newtheorem{proposition}[theoremcounter]{Proposition}
\newtheorem{remark}[theoremcounter]{Remark}
\newtheorem{theorem}[theoremcounter]{Theorem}
\newcommand{\tit}{\itshape}
\newcommand{\nbd}{\nobreakdash-\hspace{0pt}}
\newcommand{\ZZ}{\ensuremath{\mathbb{Z}}}
\newcommand{\QQ}{\ensuremath{\mathbb{Q}}}
\newcommand{\RR}{\ensuremath{\mathbb{R}}}
\newcommand{\CC}{\ensuremath{\mathbb{C}}}
\newcommand{\sgn}{\ensuremath{\mathrm{sgn}}}
\newcommand{\SL}[1]{\ensuremath{\mathrm{SL}_{#1}}}
\newcommand{\slashdiv}{\ensuremath{\mathop{/}}}
\newcommand{\HS}{\mathbb{H}}
\renewcommand{\frak}{\ensuremath{\mathfrak}}
\newcommand{\cal}{\ensuremath{\mathcal}}
\newcommand{\bboard}{\ensuremath{\mathbb}}
\newcommand{\frake}{\ensuremath{\frak{e}}}
\newcommand{\cC}{\ensuremath{\cal{C}}}
\newcommand{\cE}{\ensuremath{\cal{E}}}
\newcommand{\cF}{\ensuremath{\cal{F}}}
\newcommand{\cM}{\ensuremath{\cal{M}}}
\newcommand{\cN}{\ensuremath{\cal{N}}}
\newcommand{\cP}{\ensuremath{\cal{P}}}
\newcommand{\bbM}{\ensuremath{\bboard M}}
\newcommand{\rmJ}{\ensuremath{\mathrm{J}}}
\newcommand{\rmh}{\ensuremath{\mathrm{h}}}
\newcommand{\td}{\tilde}
\newcommand{\ov}{\overline}
\newcommand{\Z}{\ensuremath{\mathbb{Z}}}
\newcommand{\R}{\ensuremath{\mathbb{R}}}
\newcommand{\C}{\ensuremath{\mathbb{C}}}
\newcommand{\HH}{\ensuremath{\mathbb{H}}}
\newcommand{\Mp}[1]{\ensuremath{\mathrm{Mp}_{#1}}}
\newcommand{\JacF}{\ensuremath{\Gamma^{\rm J}}}
\newcommand{\sk}{{\rm sk}}
\newcommand{\Hone}{{\rm H}}
\newcommand{\holJ}{\rmJ^{\delta, \rmh}}
\newcommand{\holJJ}{\rmJ^{\Delta, \rmh}}
\newcommand{\holJH}{\rmJ^{\delta, \Hone}}
\newcommand{\holJJH}{\rmJ^{\Delta, \Hone}}
\newcommand{\holJkm}{\holJ_{k,m}}
\newcommand{\holJJkm}{\holJJ_{k,m}}
\newcommand{\holJHkm}{\holJH_{k,m}}
\newcommand{\holJJHkm}{\holJJH_{k,m}}
\newcommand{\J}{\cM \holJ}
\newcommand{\JJ}{\cM \holJJ}
\newcommand{\JH}{\cM \holJH}
\newcommand{\JJH}{\cM \holJJH}
\newcommand{\Jkm}{\J_{k,m}}
\newcommand{\JJkm}{\JJ_{k,m}}
\newcommand{\JHkm}{\JH_{k,m}}
\newcommand{\JJHkm}{\JJH_{k,m}}
\newcommand{\holJsk}{\rmJ^{\sk, \delta, \rmh}}
\newcommand{\holJJsk}{\rmJ^{\sk, \Delta, \rmh}}
\newcommand{\holJHsk}{\rmJ^{\sk, \delta, \Hone}}
\newcommand{\holJJHsk}{\rmJ^{\sk, \Delta, \Hone}}
\newcommand{\holJskkm}{\holJsk_{k,m}}
\newcommand{\holJJskkm}{\holJJsk_{k,m}}
\newcommand{\Jsk}{\cM \holJsk}
\newcommand{\JJsk}{\cM \holJJsk}
\newcommand{\JHsk}{\cM \holJHsk}
\newcommand{\JJHsk}{\cM \holJJHsk}
\newcommand{\Jskkm}{\Jsk_{k,m}}
\newcommand{\JJskkm}{\JJsk_{k,m}}
\newcommand{\JHskkm}{\JHsk_{k,m}}
\newcommand{\JJHskkm}{\JJHsk_{k,m}}
\newcommand{\lapH}{\Delta^{\rm H}}
\newcommand{\xiJ}{\xi}
\newcommand{\xiJsk}{\xi^{\sk}}
\newcommand{\xiJH}{\xi^{\rm H}}
\newcommand{\xiJHsk}{\xi^{\sk,{\rm H}}}
\newcommand{\xiJkm}{\xi_{k,m}}
\newcommand{\xiJskkm}{\xi^{\sk}_{k,m}}
\newcommand{\xiJHkm}{\xi^{\rm H}_{k,m}}
\newcommand{\xiJHskkm}{\xi^{\sk, {\rm H}}_{k,m}}
\newcommand{\Hharmonic}{{${\rm H}$\nbd harmonic}}
\newcommand{\Hquasi}{{${\rm H}$\nbd quasi}}
\renewcommand{\pmod}[1]{\;({\rm mod}\,{#1})}
\begin{document}

\title[Harmonic Maass-Jacobi forms]{Harmonic Maass-Jacobi forms with singularities\\ and a theta-like decomposition}

\author{Kathrin Bringmann}
\address{Mathematisches Institut, Universit\"at zu K\"oln\\ Weyertal 86-90, D-50931 K\"oln\\Germany}
\email{kbringma@math.uni-koeln.de} 

\author{Martin Raum}
\address{ETH Zurich, Dept. Mathematics, R\"amistrasse~101, CH-8092 Z\"urich, Switzerland}
\email{martin@raum-brothers.eu} 
\urladdr{http://www.raum-brothers.eu/martin}

\author{Olav K. Richter}
\address{Department of Mathematics\\University of North Texas\\ Denton, TX 76203\\USA}
\email{richter@unt.edu}

\thanks{The first author was partially supported by the Alfried Krupp Prize for Young University Teachers of the Krupp Foundation and by NSF grant DMS-$0757907$. The second author held a scholarship from the Max Planck society, and is supported by the ETH Zurich Postdoctoral Fellowship Program and by the Marie Curie Actions for People COFUND Program.  The third author was partially supported by Simons Foundation Grant $\#200765$}




\begin{abstract}

Real-analytic Jacobi forms play key roles in different areas of mathematics and physics, but a satisfactory theory of such Jacobi forms has been lacking.  In this paper, we fill this gap by introducing a space of harmonic Maass-Jacobi forms with singularities which includes the real-analytic Jacobi forms from Zwegers's PhD thesis.  We provide several structure results for the space of such Jacobi forms, and we employ Zwegers's $\widehat{\mu}$-functions to establish a theta-like decomposition.
\end{abstract}

\maketitle

\section{Introduction}

Jacobi forms have a long history, and they provide deep links between different types of automorphic objects.  An extraordinary Jacobi form is Zwegers's real-analytic function $\widehat{\mu}$, which is a crucial tool in his PhD thesis~\cite{Zwe-thesis} on mock theta functions.  This $\widehat{\mu}$\nbd function and similar real-analytic Jacobi forms also play a role in the theory of Donaldson invariants of $\C\mathbb{P}^2$ that are related to gauge theory (see for example G{\"o}ttsche and Zagier \cite{GZ-Selecta98},  G{\"o}ttsche, Nakajima, Yoshioka~\cite{GNY-DiffGeom08}, and Malmendier and Ono~\cite{Ono-Mal-GeomTop12}), and they also appear in the Mathieu moonshine (see for example Eguchi, Ooguri, and Tachikawa \cite{Egu-Oog-Tac}).  Naturally, one wishes to better understand real-analytic Jacobi forms.  In~\cite{B-R-Maass-Jacobi}, the first and third author suggest a definition of harmonic Maass-Jacobi forms, which up to singularities includes Zwegers's $\widehat{\mu}$\nbd function.  However, the definition in~\cite{B-R-Maass-Jacobi} only allows Jacobi forms without singularities, and hence the $\widehat{\mu}$-function itself does not belong to the space of such forms.  Another drawback is that the entire space of Jacobi forms in \cite{B-R-Maass-Jacobi} is too large, and it seems impossible to analyze the structure of that space as a whole.

In this paper, we improve the definition in~\cite{B-R-Maass-Jacobi} by introducing the space $\JJHkm$ of Heisenberg~harmonic (\Hharmonic) Maass-Jacobi forms (see Definition~\ref{def:maassjacobiforms}).  This is a space of real-analytic Jacobi forms with singularities that are annihilated by the Casimir operator~$\cC_{k,m}$ in~\eqref{Casimir} and also by the Heisenberg Laplace operator~$\lapH_m$ (a Jacobi form analogue of the usual Laplace operator) in~\eqref{eq:heisenberg-laplace}. This new space of Jacobi forms contains Zwegers's $\widehat{\mu}$-function.  We are able to describe this space explicitly, and we give a series of structure results for it.  We now explain our main results in more detail.

Recall that the Fourier series expansion of a harmonic weak Maass forms consists of a holomorphic part and a non-holomorphic part.  The holomorphic part has the shape of a weakly-holomorphic modular form, while the non-holomorphic part is more complicated and also features the special function $H$ in~\eqref{eq:H}. Bruinier and Funke's \cite{B-F-Duke04} operator $\xi_k$ maps harmonic weak Maass forms of weight $k$ to weakly-holomorphic modular forms of weight $2-k$.  Hence, one may view $\xi_k$ as a differential operator that ``simplifies'' the space of harmonic weak Maass forms.  We encounter similar phenomena in our situation.  We consider the differential operators $\xiJHkm$ (defined in \eqref{eq:xiJH}) and $\xiJ_{k,m}$ (defined in \eqref{eq:xiJ}), which are analogs of $\xi_k$.  \Hharmonic\ Maass-Jacobi forms that are annihilated by these operators are Jacobi forms with an easier structure.  For example, if a \Hharmonic\ Maass-Jacobi form without singularities is annihilated by $\xiJHkm$, then it is {\it semi-holomorphic}, i.e., holomorphic in the Jacobi variable $z$.

We introduce the following spaces of Jacobi forms of weight $k$ and index $m$, where here and throughout the paper we always assume that $k$ and $m$ are half-integers and that $m\not= 0$: The subspaces of forms in $\JJHkm$ that are annihilated by $\xiJ_{k,m}$ and $\xiJHkm$ are denoted by $\JHkm$ and $\JJkm$, respectively, and $\Jkm:=\JHkm\cap\JJkm$.  We write $\holJkm \subseteq \Jkm$, $\holJJkm \subseteq \JJkm$, $\holJHkm\subseteq\JHkm$, and $\holJJHkm\subseteq\JJHkm$ for the subspaces of forms without singularities.  Note that we suppress the superscript $!$ that some authors would use to distinguish the space of holomorphic and weakly holomorphic Jacobi forms.  Table~\ref{tab:maass_jacobi_spaces} lists key characteristics of the above spaces. The first four spaces consist of smooth functions, while the last four spaces include Jacobi forms with singularities.  The prefix ``$\cM$'' stands for ``meromorphic singularities''; see Corollary~\ref{cor:singularities_of_maass_jacobi_forms} in this context.  
\begin{table}[H]
\framebox[30em][c]{
\begin{tabular}{r@{\hspace{3ex}}ccc}
\ \\
\toprule
         & \multicolumn{3}{c}{Elements are} \\
         \cline{2-4}\noalign{\smallskip}
         & smooth                    & annihilated by & annihilated by \\
  Space  & \hphantom{annihilated by} & $\xiJHkm$      & $\xiJkm$ \\[3pt]
\midrule
$\holJkm$  & \checkmark & \checkmark & \checkmark \\[3.5pt]
$\holJJkm$ & \checkmark & \checkmark & ---  \\[3.5pt]
$\holJHkm$  & \checkmark & --- & \checkmark \\[3.5pt]
$\holJJHkm$ & \checkmark & --- & --- \\[3.5pt]
$\Jkm$ & --- & \checkmark & \checkmark \\[3.5pt]
$\JJkm$ & --- & \checkmark & --- \\[3.5pt]
$\JHkm$ & --- & --- & \checkmark \\[3.5pt]
$\JJHkm$ & --- & --- & --- \\[2pt]
\bottomrule
\\
\end{tabular}
}
\vspace{1ex}
\caption{Spaces of ${\rm H}$-harmonic Maass-Jacobi forms}
\label{tab:maass_jacobi_spaces}
\end{table}

\vspace{1ex}

In Sections~\ref{sec:harmonicjacobiforms} and~\ref{sec:singluarjacobiforms} we also study skew-Maass-Jacobi forms, but here we only give structure results for the spaces in Table~\ref{tab:maass_jacobi_spaces}.

\begin{theorem}
\label{thm:structure_maass_jacobi_forms}
\ 
\begin{enumerate}[(1)]
\item
\label{thm:it:singularity_free_hharmonic}
We have $\holJJHkm = \holJJkm$ and $\holJHkm = \holJkm$, i.e., any \Hharmonic\ Maass-Jacobi form without singularities is semi-holomorphic.  If $m < 0$, then $\holJJkm = \holJkm = \{ 0 \}$.
\item 
\label{thm:it:semimeromorphic_maass_jacobi_forms}
We have $\JJ_{k,m}= \holJJ_{k,m} + \J_{k,m}$, i.e., any $\phi \in \JJkm$ can be written as the sum of a semi-holomorphic Jacobi form and a meromorphic Jacobi form. In particular, if $0\not=\phi \in \JJkm$ is not meromorphic, then $m > 0$.
\item
\label{thm:it:decomposition_of_maass_jacobi_forms}
We have $\JJH_{k,m} = \holJJ_{k,m} + \JH_{k,m}$, i.e., any $\phi \in \JJHkm$ can be written as the sum of a semi-holomorphic Jacobi form and a Jacobi form that is annihilated by $\xiJkm$. In particular, if $0\not=\phi \in \JJHkm$ does not vanish under $\xiJkm$, then $m > 0$.
\item
\label{thm:it:positive_index_implies_semi_meromorphic}
If $m > 0$, then $\JJHkm = \JJkm$ and $\JHkm = \Jkm$, i.e., every \Hharmonic\ Maass-Jacobi form of positive index is semi-holomorphic.
\end{enumerate} 
\end{theorem}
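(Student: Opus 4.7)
The plan is to prove the four parts in the stated order, with parts (2) and (3) providing the decomposition technology that, when combined with (1), forces part (4).

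For part (\ref{thm:it:singularity_free_hharmonic}), I would start from a form $\phi\in\holJJHkm$ and analyze its partial Fourier expansion in the elliptic variable~$z$. Because $\phi$ is smooth and transforms like a Jacobi form of index~$m$, this expansion can be organized into a theta expansion with coefficients $h_\mu(\tau,\bar\tau,y,v)$ depending on $\tau$ and on the imaginary parts $y=\Im\tau$, $v=\Im z$ in a controlled way. The Heisenberg Laplace operator $\lapH_m$ from \eqref{eq:heisenberg-laplace} acts, up to a weight factor of $y$, like $\partial_z\partial_{\bar z}$ on each theta component; smoothness on the $z$-torus therefore forces each component to be $\bar z$-independent, which gives $\xiJHkm\phi=0$ and proves $\holJJHkm=\holJJkm$. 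For $m<0$ I would then plug the surviving theta expansion into the index~$m$ transformation law $z\mapsto z+\lambda\tau+\mu$: the factor $e^{-2\pi im(\lambda^2\tau+2\lambda z)}$ forces the $h_\mu$ to grow in a way that is incompatible with the (weak) Jacobi form moderate-growth condition, yielding triviality. The two identities $\holJHkm=\holJkm$ and the vanishing for $m<0$ follow by specializing this argument.

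For part (\ref{thm:it:semimeromorphic_maass_jacobi_forms}), given $\phi\in\JJkm$, I would again pass to a theta-like decomposition, now allowing meromorphic singularities as permitted by Corollary~\ref{cor:singularities_of_maass_jacobi_forms}. The vanishing of $\xiJHkm\phi$ (the $\xi$-operator dual to $\lapH_m$) controls the $\bar z$-dependence of each theta component, while the Casimir condition links the $\bar\tau$-behavior to the $\bar z$-behavior. Writing each theta component as the sum of a term that is holomorphic in $z$ (but possibly non-holomorphic in $\tau$) plus a term that is meromorphic in $(\tau,z)$ -- this split being the content of the $\widehat\mu$-style decomposition that the paper has already set up -- produces $\phi=\phi^{\mathrm{sh}}+\phi^{\mathrm{mer}}$ with $\phi^{\mathrm{sh}}\in\holJJkm$ and $\phi^{\mathrm{mer}}\in\Jkm$. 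Since the meromorphic summand is automatically a Jacobi form for all indices but the semi-holomorphic summand is nonzero only when the theta series $\theta_{m,\mu}$ converge, the ``non-meromorphic forces $m>0$'' statement drops out.

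Part (\ref{thm:it:decomposition_of_maass_jacobi_forms}) is an almost mechanical variant: starting from $\phi\in\JJHkm$ I would extract precisely the same semi-holomorphic piece $\phi^{\mathrm{sh}}\in\holJJkm$ as in part (\ref{thm:it:semimeromorphic_maass_jacobi_forms}) by isolating the $z$-holomorphic part of each theta component. The key verification is that the extraction commutes with $\xiJkm$ (both operators act componentwise in the theta decomposition), so that the remainder $\phi-\phi^{\mathrm{sh}}$ lies in the kernel of $\xiJkm$, i.e.\ in $\JHkm$. The index positivity assertion then follows by the same convergence argument as in (\ref{thm:it:semimeromorphic_maass_jacobi_forms}).

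Finally, for part (\ref{thm:it:positive_index_implies_semi_meromorphic}), I would combine (\ref{thm:it:decomposition_of_maass_jacobi_forms}) with (\ref{thm:it:singularity_free_hharmonic}): given $\phi\in\JJHkm$ with $m>0$, write $\phi=\phi^{\mathrm{sh}}+\psi$ with $\phi^{\mathrm{sh}}\in\holJJkm\subseteq\JJkm$ and $\psi\in\JHkm$. It remains to show $\xiJHkm\psi=0$, and here $m>0$ enters: for positive index, the theta components of a form in $\JHkm$ satisfy an elliptic equation whose only $L^2$-bounded solutions on the $z$-torus are $\bar z$-independent, which forces $\lapH_m\psi=0$ and hence $\xiJHkm\psi=0$. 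The corresponding identity $\JHkm=\Jkm$ is immediate from $\Jkm=\JHkm\cap\JJkm$ once $\JHkm\subseteq\JJkm$ is established. The main obstacle I expect is the careful bookkeeping in parts (\ref{thm:it:semimeromorphic_maass_jacobi_forms}) and (\ref{thm:it:decomposition_of_maass_jacobi_forms}) required to show that the extracted semi-holomorphic summand genuinely respects the singular set provided by Corollary~\ref{cor:singularities_of_maass_jacobi_forms}, so that the decomposition is internal to the relevant subspaces rather than merely formal.
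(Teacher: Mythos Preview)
Your proposal has a genuine gap in Part~(\ref{thm:it:singularity_free_hharmonic}), and this propagates through the rest of the argument.

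The claim that ``smoothness on the $z$-torus therefore forces each component to be $\bar z$-independent'' is not correct as stated. The operator $\lapH_m = Y_+Y_-$ is not $y\,\partial_z\partial_{\bar z}$; it equals $y\,\partial_z\partial_{\bar z} + 4\pi i m\,v\,\partial_{\bar z}$, and for $m<0$ it does have smooth local solutions that genuinely depend on $\bar z$. Proposition~\ref{prop:hharmonic_fourierexpansion} writes them down explicitly: the functions $c_3(n,r;y,v)$ and $c_4(n,r;y,v)$ involve $\sgn(r+2mv/y)\,\gamma\big(\tfrac12,\tfrac{-\pi y}{m}(r+2mv/y)^2\big)$, which is a perfectly smooth function of $v$ but is not holomorphic in $z$. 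So the $\lapH_m$-harmonicity plus smoothness hypothesis does not force $\xiJHkm\phi=0$ by local analysis; the obstruction is global (modularity in~$\tau$), and you have not explained how to exploit it. The paper's argument is indirect: one shows that $\xiJHkm(\phi)$, if nonzero, lies in $\holJJsk_{k,-m}$ with $-m>0$ (via Corollary~\ref{cor:semimeromorphic_skew_maass_jacobi_forms}), then uses its theta decomposition to build a non-modular comparison function out of the non-holomorphic parts $\widehat{R}_{m,l}$ of Zwegers's completions, whose subtraction leaves a nonzero smooth semi-holomorphic elliptic function of index~$m$, forcing $m>0$ --- a contradiction.

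For Parts~(\ref{thm:it:semimeromorphic_maass_jacobi_forms}) and~(\ref{thm:it:decomposition_of_maass_jacobi_forms}) your ``theta-like decomposition'' is not available: the forms in $\JJkm$ and $\JJHkm$ have honest poles in~$z$, so there is no global expansion in the $\theta_{m,l}$, and you give no mechanism to produce the semi-holomorphic summand. The paper proceeds through the $\xi$-operators instead. The crucial input is Proposition~\ref{prop:singular_skew_maass_jacobi_forms} (no semi-meromorphic skew form has a nonzero principal part), which forces $\xiJkm(\phi)$ to land in the \emph{smooth} space $\holJsk_{3-k,m}$; then surjectivity of $\xiJkm:\holJJkm\to\holJsk_{3-k,m}$, established via Jacobi--Poincar\'e series, supplies the semi-holomorphic piece~$\psi$, and $\phi-\psi$ is the meromorphic (resp.\ $\xiJkm$-annihilated) remainder. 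Your outline never invokes either ingredient. Part~(\ref{thm:it:positive_index_implies_semi_meromorphic}) in the paper is then a one-line application of Corollary~\ref{cor:semimeromorphic_skew_maass_jacobi_forms}: $\xiJHkm(\phi)\in\holJJsk_{k,-m}=\{0\}$ when $m>0$. Your route via an ``$L^2$-bounded solutions on the $z$-torus'' argument is problematic because $\psi$ may have singularities and is not a function on a compact torus.
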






\Needspace*{5em}
Before we continue, we give examples of the spaces given in Table~\ref{tab:maass_jacobi_spaces}.
\begin{example}
\label{ex:maass_jacobi_spaces}
\ 
\begin{enumerate}[(1)]
\item The usual Jacobi forms and weak Jacobi forms in \cite{EZ} belong to $\holJ_{k, m}$ (which is $\holJHkm$ by Theorem~\ref{thm:structure_maass_jacobi_forms}~(\ref{thm:it:singularity_free_hharmonic})).

\item The semi-holomorphic Jacobi-Poincar\'e series~$\cP_{k,m}^{(n,r)}$ in~\cite{B-R-Maass-Jacobi} are examples of $\holJJkm$ (which is $\holJJHkm$ by Theorem~\ref{thm:structure_maass_jacobi_forms}~(\ref{thm:it:singularity_free_hharmonic})).

\item If $0\not=\phi\in\holJkm$, then $\frac{1}{\phi}\in\J_{-k,-m}$.  

\item Theorem~\ref{thm:structure_maass_jacobi_forms}~(\ref{thm:it:semimeromorphic_maass_jacobi_forms}) asserts that a typical element in $\JJkm$ is a sum of a semi-holomorphic Maass-Jacobi form and a meromorphic Jacobi form.  For example, let $\cP_{k,m}^{(n,r)}$ be the semi-holomorphic Jacobi-Poincar\'e series in~\cite{B-R-Maass-Jacobi}, and let $\phi_{10,1}$ and $\phi_{12,1}$ be the usual Jacobi cusp forms of index $1$ and weights $10$ and $12$, respectively. Then $\cP_{14, 1}^{(12, 0)} + \frac{( \phi_{12,1})^2}{\phi_{10,1}}\in\JJ_{14, 1}$.

\item Let $\phi \in \holJskkm$ (defined in Section~\ref{sec:harmonicjacobiforms}) with theta decomposition $\phi= \sum_{l \pmod{2 m}} h_l\, \theta_{m,l}$ and $\widehat{\mu}_{m,l}$ as in (\ref{eq:mu_hat_ml_definition}). Theorem~\ref{thm:hharmonic_maass_jacobi_forms_thetadecomposition} implies that $\sum_{l \pmod{2 m}} h_l \, \widehat{\mu}_{m,l}\in\JH_{k,-m}$.

\item
Theorem~\ref{thm:structure_maass_jacobi_forms}~(\ref{thm:it:decomposition_of_maass_jacobi_forms}) gives $\JJH_{k,-m} = \JJ_{k,-m} + \JH_{k,-m}$, which shows how to construct examples of forms in $\JJH_{k,-m}$.

\item
\label{ex:maass_jacobi_spaces:it:mu_function}
Zwegers's \cite{Zwe-thesis} real-analytic Jacobi form $\widehat{\mu}$ has a decomposition of the form $\widehat{\mu} = \mu_1 + \widehat{\mu}_2$, where $\mu_1$ is a meromorphic Jacobi form on $\HH\times\C^2$ and where $\widehat{\mu}_2$ is a real analytic Jacobi form on $\HH\times\C$ (see the footnote (1) on page 7 of \cite{Z-Bourbaki} and also \eqref{eq:mu_onehalf_def}).  Note that the image of $\widehat{\mu}_2$ under $\xi_{\frac12,-\frac12}$ was given incorrectly in \cite{B-R-Maass-Jacobi}, and it should have been $\xi_{\frac12,-\frac12}(\widehat{\mu}_2)=0$. One finds that $\widehat{\mu}_2 \in \JH_{\frac{1}{2}, -\frac{1}{2}}$.
\end{enumerate}
\end{example}

Recall that harmonic weak Maass forms are uniquely determined by their singularities at the cusps up to holomorphic modular forms, which are zero for negative weight. Theorem~\ref{thm:structure_maass_jacobi_forms}~(\ref{thm:it:singularity_free_hharmonic}) provides the corresponding result for \Hharmonic\ Maass-Jacobi forms. Specifically, \Hharmonic\ Maass-Jacobi forms are uniquely determined by their singularities up to semi-holomorphic Maass-Jacobi forms, which are zero for negative index.  Note that the recent work of Dabholkar, Murthy, and Zagier \cite{DMZ} on quantum black holes and mock modular forms features {\it mock Jacobi forms}, which may be viewed as the holomorphic parts of semi-holomorphic Maass-Jacobi forms.  Theorem~\ref{thm:structure_maass_jacobi_forms}~(\ref{thm:it:singularity_free_hharmonic}) implies that \cite{DMZ} investigates precisely the holomorphic parts of \Hharmonic\ Maass-Jacobi forms without singularities. Such Jacobi forms play also an important role in fully understanding modularity properties of Kac-Wakimoto characters (see the first author and Olivetto \cite{B-Oliv}).



%

We now turn our attention to another main result.  The classical Jacobi forms in Eichler and Zagier \cite{EZ} have a {\it theta decomposition}, which can be phrased as in \eqref{eq:theta_expansion_map}.  It is easy to see that the semi-holomorphic Maass-Jacobi forms in \cite{B-R-Maass-Jacobi} also have such a theta decomposition.  In this paper, we employ the $\widehat{\mu}$\nbd functions from Zwegers \cite{Zwe-thesis, Zwe-multivar-Appell} to establish a {\it theta-like decomposition} for \Hharmonic\ Maass-Jacobi forms.  More precisely, let $M^!_{k - \smash{\frac{1}{2}},\check \rho_m}$ be the space of weakly holomorphic vector-valued modular forms of weight $k-\frac{1}{2}$ and type $\check \rho_m$ (see Section~\ref{sec:thetadecompositions} for details), and let $\widehat{\mu}_{m,l}$ be the (completed) vector-valued Jacobi form defined in (\ref{eq:mu_hat_ml_definition}), and which is a specialization of Zwegers's function in \cite{Zwe-multivar-Appell}. Theorem~\ref{thm:hharmonic_maass_jacobi_forms_thetadecomposition} gives the theta-like decomposition for \Hharmonic\ Maass-Jacobi forms,  which can also be stated as follows:
\begin{theorem}
\label{thm:intro_thetadecomposition}
Let $m > 0$. The spaces $M^!_{k - \frac{1}{2}, \check \rho_m}$ and $\JH_{k,-m} \slashdiv \J_{k,-m}$ are isomorphic via
\begin{gather*}
  \big( h_l \big)_l
\longmapsto
  \sum_{l \pmod{2 m}} h_l \, \widehat{\mu}_{m,l}
\text{.}
\end{gather*}
\end{theorem}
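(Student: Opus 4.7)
The plan is to establish that the map
\begin{gather*}
  \Phi\colon M^!_{k-\frac12,\check\rho_m}\longrightarrow \JH_{k,-m}\slashdiv\J_{k,-m},\quad (h_l)_l\longmapsto \sum_{l\pmod{2m}}h_l\,\widehat{\mu}_{m,l}
\end{gather*}
is well-defined, injective modulo $\J_{k,-m}$, and surjective onto the quotient.

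\textbf{Well-definedness.} Following Zwegers, $\widehat{\mu}_{m,l}$ transforms as a vector-valued Jacobi form of weight $\frac12$, index $-m$, and representation type $\rho_m$, and is annihilated by $\xiJ_{\frac12,-m}$; this is precisely the content of the non-holomorphic completion of Zwegers's $\mu$. Pairing with $(h_l)_l$ of the dual type $\check\rho_m$ causes the representations to cancel, yielding a scalar-valued Jacobi form of weight $k$ and index $-m$. Since $h_l$ is holomorphic in $\tau$ and constant in $z$, multiplication commutes with the Casimir operator $\cC_{k,-m}$, with $\lapH_{-m}$, and with $\xiJ$, so the sum is still annihilated by $\cC$, $\lapH$, and $\xiJ$. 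The admissible singularities follow from the weak holomorphy of $h_l$ and Corollary~\ref{cor:singularities_of_maass_jacobi_forms}. Hence $\Phi(h)\in\JH_{k,-m}$.

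\textbf{Injectivity.} The key computation is
\begin{gather*}
  \xiJH_{k,-m}\!\bigl(\widehat{\mu}_{m,l}\bigr)\;=\;c\cdot\theta_{m,l}
\end{gather*}
for a nonzero universal constant $c$, obtained by applying the $\bar z$-derivative to the non-holomorphic part of $\widehat{\mu}$: that derivative exactly reproduces the Gaussian integrand, producing the unary theta $\theta_{m,l}$. If $\Phi(h)\in\J_{k,-m}$, then $\xiJH_{k,-m}\Phi(h)=c\sum_l h_l\,\theta_{m,l}=0$. Because the $\theta_{m,l}$ have $z$-Fourier supports lying in distinct residues mod $2m$, they are linearly independent over the ring of functions in $\tau$ alone, which forces every $h_l=0$.

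\textbf{Surjectivity.} Given $\phi\in\JH_{k,-m}$, set $\psi:=\xiJH_{k,-m}(\phi)$. Tracking weights, indices, and representation types through $\xiJH$ places $\psi$ in a space of (skew-)holomorphic Jacobi forms of positive index $m$; Theorem~\ref{thm:structure_maass_jacobi_forms}~(\ref{thm:it:positive_index_implies_semi_meromorphic}) guarantees that such forms are semi-holomorphic, so the classical theta decomposition applies:
\begin{gather*}
  \psi\;=\;c\sum_{l\pmod{2m}} h_l\,\theta_{m,l}
\end{gather*}
for a unique vector $(h_l)_l\in M^!_{k-\frac12,\check\rho_m}$. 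The explicit formula for $\xiJH(\widehat{\mu}_{m,l})$ established above then yields $\xiJH_{k,-m}\Phi((h_l)_l)=\psi$, so $\phi-\Phi((h_l)_l)\in\JH_{k,-m}\cap\ker\xiJH_{k,-m}=\J_{k,-m}$, which proves that the class of $\phi$ lies in the image of $\Phi$.

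\textbf{Main obstacle.} The delicate point is the surjectivity step: one must verify that $\xiJH_{k,-m}$ sends $\phi$ into exactly the space of (skew-)holomorphic Jacobi forms of positive index $m$ whose theta coefficients are weakly holomorphic vector-valued modular forms of weight $k-\frac12$ and type $\check\rho_m$, and that the normalization constant appearing in $\xiJH_{k,-m}(\widehat{\mu}_{m,l})$ matches the one in the classical theta expansion so the subtraction $\phi-\Phi((h_l)_l)$ cancels exactly under $\xiJH$. This requires careful bookkeeping of growth conditions, of sign conventions arising from the negative index, and of the multiplier/representation type as it passes through both differential operators $\xiJ$ and $\xiJH$.
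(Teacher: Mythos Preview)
Your approach is essentially the same as the paper's: compute $\xiJH_{k,-m}(\widehat{\mu}_{m,l})=\theta_{m,l}$, use linear independence of the $\theta_{m,l}$ for injectivity, and use the theta decomposition of $\xiJH_{k,-m}(\phi)\in\holJsk_{k,m}$ for surjectivity.

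A few technical slips to fix. First, $\xiJH$ involves complex conjugation, so $\xiJH_{k,-m}\bigl(\sum_l h_l\,\widehat{\mu}_{m,l}\bigr)=\sum_l \overline{h_l}\,\theta_{m,l}$, not $\sum_l h_l\,\theta_{m,l}$; this matches the skew theta decomposition~\eqref{eq:theta_expansion_map_skew} and does not affect the argument. Second, multiplication by $h_l(\tau)$ does \emph{not} commute with $\cC_{k,-m}$ (which contains $\partial_\tau$); however $\cC_{k,-m}$ vanishes automatically once $\xiJ_{k,-m}$ and $\lapH_{-m}$ do, so the conclusion survives. Third, your citation of Theorem~\ref{thm:structure_maass_jacobi_forms}~(\ref{thm:it:positive_index_implies_semi_meromorphic}) is the wrong result: that statement concerns $\JJHkm$, not skew forms. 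What you need is that $\xiJH_{k,-m}(\phi)$ lands in $\holJsk_{k,m}$, which follows from \eqref{eq:xi_equiv}, \eqref{eq:lapH_factor}, Corollary~\ref{cor:semimeromorphic_skew_maass_jacobi_forms}, and Corollary~\ref{cor:commuting_xi_operators}.

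Finally, the ``main obstacle'' you flag about growth conditions is real and is not resolved by bookkeeping alone. The paper handles it by first proving the bijection with the growth condition removed on both sides, and then invoking a separate result (Proposition~\ref{prop:singularities_of_maass_jacobi_forms}) to show that the growth condition on $\phi$ forces the $h_l$ to be weakly holomorphic. You should make that two-step structure explicit rather than leave it as a caveat.
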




The theta decomposition of classical Jacobi forms in \cite{EZ} has a natural explanation in terms of representation theory, which is discussed in detail in Berndt and Schmidt~\cite{BS}.  Specifically, let $\pi_{\rm SW}^m$ be the Schr\"odinger-Weil representation of the real Jacobi group with a certain central character.  Then the map
$$
\widetilde{\pi}\longmapsto\pi:=\widetilde{\pi} \otimes \pi_{\rm SW}^m
$$
gives a one-to-one correspondence between genuine automorphic representations $\widetilde{\pi}$ of the metaplectic double cover of $\SL{2}(\RR)$ and automorphic representations $\pi$ of the real Jacobi group.  It would be interesting to find such a representation theoretic interpretation of the theta-like decomposition in Theorem~\ref{thm:intro_thetadecomposition}.  Note that there is no such immediate analog in representation theoretic language, since nontrivial elements of $\JH_{k,-m} \slashdiv \J_{k,-m}$ correspond to functions that are not in $L^2$.  We expect that a combination of Theorems~\ref{thm:structure_maass_jacobi_forms} and~\ref{thm:intro_thetadecomposition} will yield new relations of certain quantities that, so far, have been treated by means of mixed mock modular forms (for example, ``contributions of bounded states of two primitive constitutents with primitive $D4$-brane charges to the full $\cN = 2$ supergravity partition function''; see Section~4 and Appendix~A in Manschot~\cite{Man-ComMatPhy10}).

The paper is organized as follows.  In Section~\ref{sec:differentialoperators}, we review differential operators for the Jacobi group.  In Section~\ref{sec:harmonicjacobiforms}, we define \Hharmonic\ Maass-Jacobi forms, and we explore their Fourier series expansions.  In Section~\ref{sec:singluarjacobiforms}, we apply tools from complex analysis of several variables to study Maass-Jacobi forms with singularities, and we prove Theorem~\ref{thm:structure_maass_jacobi_forms}~(\ref{thm:it:semimeromorphic_maass_jacobi_forms}).  In Section~\ref{sec:thetadecompositions}, we determine the structure of \Hharmonic\ Maass-Jacobi forms, and we prove Theorem~\ref{thm:structure_maass_jacobi_forms}~(\ref{thm:it:singularity_free_hharmonic}), (\ref{thm:it:decomposition_of_maass_jacobi_forms}), and~(\ref{thm:it:positive_index_implies_semi_meromorphic}), and Theorem~\ref{thm:intro_thetadecomposition}. Finally, in Section~\ref{sec:quasijacobiforms} we discuss \Hquasi\ Maass-Jacobi forms, which are real-analytic Jacobi form analogs of the usual quasimodular forms.

\section{Differential operators for the Jacobi group}
\label{sec:differentialoperators}

In this section, we briefly review differential operators for the Jacobi group.  Such operators have been studied in detail in the integral weight case (see \cite{BS, Ameya, Raum-thesis}), but it is easy to see that the results carry over to the half-integral weight setting.  We will summarize these results, after introducing necessary notation.  Throughout, we write $\tau=x+iy\in\HH$ (the usual complex upper half plane) and  $z=u+iv\in\C$.  Recall that the metaplectic cover $\Mp{2}(\RR)$ of $\SL{2}(\RR)$ is the group of pairs $(g, \omega)$, where $g = \left(\begin{smallmatrix}a & b \\ c & d\end{smallmatrix}\right)\in \SL{2}(\RR)$ and $\omega \,:\, \HS \rightarrow \C,\, \tau \mapsto \sqrt{c \tau + d}$ for a holomorphic choice of the square root, with group law
\begin{gather*}
  (g, \omega) (g', \omega')
=
  (g g', \omega \circ g' \cdot \omega')
\text{.}
\end{gather*}
Let $G^{\rm J} := G^{\rm J}(\R) : = \Mp{2}(\R) \ltimes (\R^2 \td\times \R)$ be the {\em metaplectic real Jacobi group} with group law
$$ \bigl(M,X,\kappa\bigr) \bigl(M',X',\kappa'\bigr) := 
   \bigl(MM', XM'+X', \det{ \genfrac(){0pt}{}{XM'}{X'}} 
   + \kappa + \kappa'\bigr)
$$
and let $\Gamma^{\rm J}:=\Mp{2}(\Z)\ltimes \Z^2$ be the full Jacobi group, where $\Mp{2}(\Z)$ is the preimage of $\SL{2}(\Z)$ in $\Mp{2}(\R)$. 
For fixed half-integers $k$ and $m$, and for all
$A=\left[\big(\left(\begin{smallmatrix}a & b\\c & d\end{smallmatrix}\right), \, \sqrt{c \tau + d} \,\big), (\lambda, \mu), \kappa\right]\in G^J$, define the following slash operators on functions $\phi:\HH \times\C\rightarrow\C$\,: 
\begin{align}
\label{Jacobi-slash}
&
  \Big(\phi\,\big|_{k,m} A \Big)(\tau,z)
:=
\\[4pt]
\nonumber 
&
\quad
  \phi \Big(\frac{a\tau+b}{c\tau+d},\frac{z+\lambda\tau+\mu}{c\tau+d} \Big) \,
  (\sqrt{c\tau+d})^{-2k}\, e^{2\pi im\bigl(-\frac{c(z+\lambda\tau+\mu)^2}{c\tau+d}+\lambda^2\tau+2\lambda z+\lambda\mu+\kappa\bigr)}
\end{align}
and
\begin{align}
\label{skew-slash}
&
  \Big(\phi\,\big|_{k,m}^{sk} A \Big)(\tau,z)
:=
\\[4pt]
\nonumber
&
\quad
  \phi \Big( \frac{a\tau+b}{c\tau+d}, \frac{z+\lambda\tau+\mu}{c\tau+d} \Big) \,
  (\sqrt{c\overline{\tau}+d})^{2-2k}\,|c\tau+d|^{-1} \,
  e^{2\pi im\bigl(-\frac{c(z+\lambda\tau+\mu)^2}{c\tau+d}+\lambda^2\tau+2\lambda z+\lambda\mu+\kappa\bigr)}
\text{.}
\end{align}
\vspace{1ex}

If $\kappa=0$, then by a slight abuse of notation we write $\left[\big(\left(\begin{smallmatrix}a & b\\c & d\end{smallmatrix}\right), \sqrt{c \tau + d} \,\big), (\lambda, \mu)\right]$ instead of 
$\left[\big(\left(\begin{smallmatrix}a & b\\c & d\end{smallmatrix}\right), \sqrt{c \tau + d} \,\big), (\lambda, \mu), 0\right]\in G^J$.  For convenience, we define
\begin{align*}
  \partial_{\tau}
&:=
  \frac{\partial}{\partial \tau}
=
  \frac12\left(\frac{\partial}{\partial x}-i\frac{\partial}{\partial y}\right)
\text{,}
&
  \partial_{\overline{\tau}}
&:=
  \frac{\partial}{\partial \overline{\tau}}
=
  \frac12\left(\frac{\partial}{\partial x}+i\frac{\partial}{\partial y}\right)
\text{,}
\\[6pt]
  \partial_{z}
&:=
  \frac{\partial}{\partial z}
=
  \frac12\left(\frac{\partial}{\partial u}-i\frac{\partial}{\partial v}\right)
\text{,}
&
  \partial_{\overline{z}}
&:=
  \frac{\partial}{\partial \overline{z}}
=
  \frac12\left(\frac{\partial}{\partial u}+i\frac{\partial}{\partial v}\right)
\text{.}
\end{align*}
The {\em raising operators} and {\em lowering operators} with respect to actions in (\ref{Jacobi-slash}) and (\ref{skew-slash}) are given by the differential operators,

\begin{align*}
   X^{k,m}_+ &:= 2i \Big(\partial_\tau + \frac{v}{y}\partial_z + 2\pi i m \frac{v^2}{y^2}\Big)
   + \frac{k}{y}
\text{,}
&
   X^{k,m}_- &:= -2iy \bigl(y \partial_{\ov\tau} + v \partial_{\ov z} \bigr)
\text{,}
\\[6pt]
   Y^{k,m}_+ &:= i \partial_z -4\pi m \frac{v}{y} 
\text{,}&
   Y^{k,m}_- &:= -iy \partial_{\ov z}
\text{,}
\\[6pt]
   X^{\sk;\, k,m}_+ &:= 2i \bigl(y^2 \partial_\tau + y v \partial_z + 2\pi i m v^2\bigr)
   + \tfrac{1}{2} y 
\text{,} & X^{\sk;\, k,m}_- &:= -2i \Big(\partial_{\ov\tau} + \frac{v}{y} \partial_{\ov z} \Big) + \bigl(k - \tfrac{1}{2}\bigr) \frac{1}{y}
\\[6pt]
   Y^{\sk;\, k,m}_+ &:= i y \partial_z -4\pi m v 
\text{,}&
   Y^{\sk;\, k,m}_- &:= -i \partial_{\ov z}
\text{.}
\end{align*}

\vspace{1ex}

The following proposition summarizes their properties.
\begin{proposition}
(\cite{BS, Raum-thesis}) If $A\in G^J$ and $\phi\in \C^{\infty}(\HH \times\C)$, then
\begin{align*}
   X^{k,m}_{\pm}\left(\phi \big|_{k,m}\, A\right) &= \left(X^{k,m}_{\pm}\phi\right) \big|_{k\pm 2,m}\, A 
\text{,}
&
   Y^{k,m}_{\pm}\left(\phi \big|_{k,m}\, A\right) &= \left(Y^{k,m}_{\pm}\phi\right) \big|_{k\pm 1,m}\, A 
\text{,}
\\[6pt]
   X^{\sk;\,k,m}_{\pm}\left(\phi \big|_{k,m}^{sk}\, A\right) &= \left(X^{\sk;\,k,m}_{\pm}\phi\right) \big|_{k\mp 2,m}^{sk}\, A 
\text{,}
&
   Y^{\sk;\,k,m}_{\pm}\left(\phi \big|_{k,m}^{sk}\, A\right)&= \left(Y^{\sk;\,k,m}_{\pm}\phi\right) \big|_{k\mp 1,m}^{sk}\, A
\text{.}
\end{align*}
\end{proposition}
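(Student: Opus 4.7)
The plan is to verify each of the eight intertwining relations by direct computation on a set of generators of $G^{\rmJ}$, exploiting two elementary observations. First, both sides of each identity are $\C$-linear in $\phi$ and compose multiplicatively in $A$: if an identity such as $X_+^{k,m}(\phi|_{k,m} A) = (X_+^{k,m}\phi)|_{k+2,m} A$ holds for $A=A_1$ and $A=A_2$ at every intermediate weight, then it holds for $A_1 A_2$. Second, since $G^{\rmJ}$ is a connected Lie group and both sides depend smoothly on $A$, it suffices to verify each identity on a subset $\cS\subseteq G^{\rmJ}$ whose one-parameter envelope generates a dense subgroup. A convenient choice is $\cS = \{T,\,S\} \cup \bigl\{((I,1),(\lambda,\mu),\kappa) : (\lambda,\mu,\kappa) \in \R^3\bigr\}$, where $T$ and $S$ denote the usual generators of $\Mp{2}(\Z)$ regarded inside $G^{\rmJ}$.

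For central and Heisenberg elements the verification is routine. The action of $((I,1),(\lambda,\mu),\kappa)$ sends $(\tau,z)$ to $(\tau,\,z+\lambda\tau+\mu)$ and multiplies by the explicit exponential in~\eqref{Jacobi-slash}. Applying the chain rule produces extra terms from derivatives of this exponential, and a short calculation shows that these are cancelled by the $v$-dependent summands (such as $2\pi i m v^2/y^2$ in $X_+^{k,m}$ or $-4\pi m v/y$ in $Y_+^{k,m}$) once one accounts for the shift $v \mapsto v + \lambda y$ induced by passing from $z$ to $z+\lambda\tau+\mu$. For $T$ the slash action reduces to $\tau \mapsto \tau + 1$ and the identity is immediate, since every operator in sight is translation-invariant in $x$.

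The main obstacle is the verification for $S$, which requires combining the chain rule on $\tau \mapsto -\tau^{-1}$, $z \mapsto z/\tau$ with differentiation of the cocycle $(\sqrt\tau)^{-2k}\exp(-2\pi i m z^2/\tau)$, while carefully tracking $y \mapsto y/|\tau|^2$, $v \mapsto (vx - uy)/|\tau|^2$, and the induced transformations of $\partial_\tau,\partial_{\bar\tau},\partial_z,\partial_{\bar z}$. Once these substitutions are in place, the eight identities follow by collecting terms according to monomial degree in $y^{-1}$ and $v$, exactly as in the integral-weight computation of~\cite{BS, Raum-thesis}; the half-integral case only changes the branch choice of $\sqrt{c\tau+d}$ and introduces no new difficulty. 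The analogous computation for $|_{k,m}^{\sk}$ proceeds identically with $(\sqrt{c\bar\tau+d})^{2-2k}\,|c\tau+d|^{-1}$ in place of $(\sqrt{c\tau+d})^{-2k}$, which is responsible for the reversed weight shift $k \mp 2$ (respectively $k \mp 1$) appearing in $X_\pm^{\sk;\,k,m}$ and $Y_\pm^{\sk;\,k,m}$. A conceptual alternative which organizes all eight identities simultaneously is to realize the raising and lowering operators as images under $d\pi_{k,m}$ of weight-eigenvectors in the complexified Lie algebra $\frak{g}_\C^{\rmJ} = \frak{sl}_2(\C) \ltimes \frak{h}_\C$, whereupon the intertwining relations reduce to the standard identity $\pi(A) \circ d\pi(X) = d\pi(\mathrm{Ad}(A)\,X) \circ \pi(A)$ for smooth representations of Lie groups.
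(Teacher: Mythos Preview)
The paper itself does not prove this proposition: it is stated with a citation to \cite{BS, Raum-thesis} and no argument is given. So your proposal is not to be compared against a proof in the paper but rather against the standard verification found in those references, which is indeed a direct computation (or, equivalently, the Lie-algebra argument you mention at the end).

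Your outline is correct in spirit, but there is a small imprecision in the reduction step. You write that it suffices to check on $\cS=\{T,S\}\cup\{((I,1),(\lambda,\mu),\kappa)\}$ because $G^{\rmJ}$ is connected and both sides depend smoothly on~$A$. Smoothness alone does not let you pass from a discrete set to all of $\Mp{2}(\RR)$, and $T,S$ only generate $\Mp{2}(\ZZ)$, which is not dense in $\Mp{2}(\RR)$ in the usual topology. What actually works is either (i) to observe that both sides are real-analytic in the matrix entries of $A$ (up to the branch of the square root) and that $\Mp{2}(\ZZ)$ is Zariski-dense, or (ii) more simply, to replace $T$ by the full one-parameter upper-triangular subgroup $\left(\begin{smallmatrix}1&b\\0&1\end{smallmatrix}\right)$ and the diagonal subgroup, which together with $S$ generate $\Mp{2}(\RR)$ as a group via the Bruhat decomposition; the computation for these is no harder than for $T$ alone. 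Alternatively, the Lie-algebra argument you sketch in your final sentence handles all of this at once and is the cleanest route; this is essentially what is done in \cite{BS}.
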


\vspace{1ex}

The Casimir operator with respect to the action in (\ref{Jacobi-slash}) is given by
\begin{align}
\label{Casimir}
  \cC_{k,m}
:=
{} &
  2 X_+^{k-2,m} X_-^{k,m} - \tfrac{1}{2\pi m}\bigl( X_+^{k-2,m} Y_-^{k-1,m}Y_-^{k,m} - Y_+^{k-1,m}Y_+^{k-2,m} X_-^{k,m} \bigr)
\\[4pt]
\nonumber
{} &
  + \tfrac{1}{2\pi m}(k - 2)\, Y_+^{k-1,m}Y_-^{k,m}
\text{,}
\end{align}
and the Casimir operator with respect to the action in (\ref{skew-slash}) is given by (normalized as in \cite{BRR-Kohnen-limit}) 
$$
\cC_{k,m}^{\sk}:=8\pi i m\left(y^{\frac{1}{2}-k}  \cC_{1-k,m}y^{k-\frac{1}{2}}\right)+2k-1
$$
(see also \cite{BS, Ameya, B-R-Maass-Jacobi, Con-Raum, Raum-thesis}).  

Throughout, we adopt the following terminology.  A real-analytic $\phi:\HH\times\C\rightarrow\C$ is {\it semi-meromorphic} if $\phi(\tau, \,\cdot\,)$ is meromorphic with isolated singularities for all $\tau \in \HS$.  In this case $\phi$ is annihilated by $Y_-^{k,m}$ or by $Y_-^{\sk;k,m}$.   Moreover, we call a semi-meromorphic function that has no singularities \emph{semi-holomorphic}.  Finally, if $\phi$ is annihilated by the {\it Heisenberg Laplace operator}
\begin{gather}
\label{eq:heisenberg-laplace}
\lapH_m := Y_+^{k-1,m} Y_-^{k,m} = Y^{\sk;\, k+1,m}_+ Y^{\sk;\, k,m}_-,
\end{gather}
then $\phi$ is {\it Heisenberg~harmonic (\Hharmonic)}.  Note that the differential operator $\Delta_0$ in~\cite[p.~38]{BS} is very similar to $\lapH_m$.

\section{${\rm H}$-harmonic Maass-Jacobi forms}
\label{sec:harmonicjacobiforms}

The understanding of Maass-Jacobi forms is evolving with connections to different areas of mathematics and physics.  Maass-Jacobi forms were first introduced by Berndt and Schmidt \cite{BS}, and then more thoroughly investigated by Pitale \cite{Ameya}.  The first and third author \cite{B-R-Maass-Jacobi} extended Pitale's approach even further to include weak Maass-Jacobi forms.  The theory in \cite{B-R-Maass-Jacobi} includes new examples in the form of semi-holomorphic Poincar\'{e} series, but lacked new examples that are not holomorphic in $z$.  The notion of harmonic Maass-Jacobi forms in \cite{B-R-Maass-Jacobi} is also quite general, and refinements of the definition of Maass-Jacobi forms are needed.  In this section, we introduce the space $\JJHkm$ of Heisenberg~harmonic (\Hharmonic) Maass-Jacobi forms, and we give the differential operators that are needed to define its subspaces in Table~\ref{tab:maass_jacobi_spaces}.  These subspaces provide the desired refinements of Maass-Jacobi forms.

First we recall the weight $k$ hyperbolic Laplacian
\begin{gather*}
  \Delta_k
:=
  -4 y^2 \partial_{\tau} \partial_{\ov \tau} + 2 k i y\partial_{\ov \tau}
\text{,}
\end{gather*}
which is needed for the definition of harmonic weak Maass forms:
\begin{definition}[Harmonic weak Maass forms]
A harmonic weak Maass form of weight $k$ on a congruence subgroup $\Gamma \subset \Mp{2}(\ZZ)$ is a real-analytic function $f :\, \HS \rightarrow \CC$ satisfying the following conditions:
\begin{enumerate}[(1)]
\item For all $\big(\left(\begin{smallmatrix} a & b \\ c & d \end{smallmatrix}\right), \sqrt{c \tau + d} \, \big) \in \Gamma$, we have $f\big( \frac{a \tau + b}{c \tau + d} \big) = \sqrt{c \tau + d}^{2 k} \, f(\tau)$.\vspace{1ex}
\item We have that $\Delta_k(f) = 0$.\vspace{1ex}
\item The function $f$ has at most linear exponential growth at all cusps of $\Gamma$.
\end{enumerate} 
Let $\bbM_k$ denote the space of harmonic Maass forms of weight $k$, and denote its subspace of weakly holomorphic modular forms by ${\rm M}_{k}^! \subset \bbM_k$.
\end{definition}

The next definition allows us to define Jacobi forms with singularities in Definition~\ref{def:maassjacobiforms}.

\begin{definition}
\label{def:real_analytic_poles}
We say that a function $\phi :\, \RR^n \rightarrow \CC$ has a \emph{singularity of type $f g^{-1}$ at $x \in \RR^n$} if there are non-zero real-analytic functions $f$ and $g$ on a neighborhood $U \subset \RR^n$ of $x$ such that $\phi - f g^{-1}$ can be extended to a real-analytic function on $U$.  In addition, if $\phi$ is defined on a space with a complex structure and if $f$ and $g$ are holomorphic, then we say that $\phi$ has a singularity of meromorphic type.
\end{definition}
\begin{remark}
Functions whose natural domain of definition (see~\cite{Nishino} for details) are multi sheeted lead to singularities that are not as in Definition~\ref{def:real_analytic_poles}.  Prominent examples are logarithmic singularities and roots.
\end{remark}

We now improve the definition of harmonic Maass-Jacobi forms in \cite{B-R-Maass-Jacobi}.

\begin{definition}[${\rm H}$-harmonic Maass-Jacobi forms]
\label{def:maassjacobiforms}
Let $\phi :\, \HS \times \CC \rightarrow \CC$ be a real-analytic function except for possible singularities of type $f g^{-1}$, where $f$ and $g$ are real-analytic, such that the singularities of $\phi(\tau, \,\cdot\,)$ are isolated for every $\tau \in \HS$.  Then $\phi$ is an \Hharmonic\ Maass-Jacobi form of weight $k$ and index $m$ if the following conditions are satisfied:

\begin{enumerate}[(1)]
\item For all $A \in \JacF$, we have $\phi \big|_{k, m} A = \phi$.\vspace{1ex}
\item \label{it:harmonic_condition_Casimir} We have that $\cC_{k,m}(\phi)=0$.\vspace{1ex}
\item \label{it:hharmonic_condition_Laplace} We have that $\lapH_m(\phi)=0$.\vspace{1ex}
\item \label{it:growth_condition} For every $\alpha, \beta \in \QQ$ such that $\{(\tau, \alpha\tau + \beta) \,:\, \tau \in \HS\}$ is not a polar divisor of $\phi$, we have that $\phi (\tau, \alpha\tau+\beta) = O\big( e^{a y} \big)$ as $y \rightarrow \infty$ for some $a>0$.
\end{enumerate}
We write $\JJHkm$ for the space of such forms.
\end{definition}

\begin{remark}
We call condition~(\ref{it:growth_condition}) in the previous definition the growth condition.  A priori, it is not clear if there are $\alpha, \beta$ such that the function $\phi(\tau, \alpha\tau + \beta)$ has singularities for arbitrary large~$y$.  However, Proposition~\ref{prop:singularities_of_maass_jacobi_forms} shows that this is not the case.  Note that we need the growth condition~(\ref{it:growth_condition}) only in the proof of Theorems~\ref{thm:intro_thetadecomposition} and~\ref{thm:hharmonic_maass_jacobi_forms_thetadecomposition}, in order to establish a relation to harmonic weak Maass forms, which also satisfy a certain growth condition.
\end{remark}

In the following we define analogs of Bruinier and Funke's \cite{B-F-Duke04} operator $\xi_k$, which are needed to characterize the spaces of Jacobi forms in Table~\ref{tab:maass_jacobi_spaces}.  Set 
\begin{align}
\label{eq:xiJH}
  \xiJHkm (\phi)
&:=
  \sqrt{-my}^{\,-1} \exp\big( -4 \pi m \tfrac{v^2}{y} \big)\; \ov{Y_-^{k,m} (\phi)}
\quad\text{and}
\\[6pt]
\label{eq:xiJHsk}
  \xiJHskkm(\phi)
&:=
  \sqrt{-my} \exp\big( -4 \pi m \tfrac{v^2}{y} \big)\; \ov{Y_-^{\sk;k,m} (\phi)}
\text{.}
\end{align}
The operators $\xiJHkm$ and $\xiJHskkm$ are covariant with respect to the actions in (\ref{Jacobi-slash}) and (\ref{skew-slash}): If $\phi$ is a smooth function on $\HH\times\C$ and $A\in G^J$, then
\begin{align}
\label{eq:xi_equiv}
 \big(\xiJHkm (\phi) \big)\big|^{\sk}_{k,-m}A
&=\xiJHkm \Big( \phi\big|_{k,m}A \Big)
  \quad\text{and}
\\[6pt]
 \big(\xiJHskkm (\phi) \big)\big|_{k,-m}A
&=\xiJHskkm \Big( \phi\big|^{\sk}_{k,m}A \Big)\text{.}
\end{align}
Recall that the weight $k$ hyperbolic Laplacian factors as $\Delta_k=-\xi_{2-k} \circ \xi_k$.  Similarly, one finds that
\begin{gather}
\label{eq:lapH_factor}  
  \lapH_m
=
  \xiJHsk_{k,-m} \circ \xiJHkm=\xiJH_{k,-m}\circ \xiJHsk_{k,m}
\text{.}
\end{gather}

From \cite{B-R-Maass-Jacobi} and  \cite{BRR-Kohnen-limit} recall the definitions
\begin{align}
\label{eq:xiJ}
  \xiJkm (\phi)
&:=
  y^{k-5/2} \Big( X_-^{k,m} (\phi) - \tfrac{1}{4\pi m} Y_-^{k-1,m}Y_-^{k,m} (\phi) \Big)
\quad\text{and}
\\[6pt]
\label{eq:xiJsk}
  \xiJskkm \big(\phi\big)
&:=
  y^{k-5/2} \Big( X_+^{\sk;k,m} (\phi) - \tfrac{1}{4\pi m} Y_+^{\sk;k+1,m} Y_+^{\sk;k,m} (\phi) \Big)
=
  \tfrac{1}{4\pi m} y^{k-\frac{1}{2}} \,L_m (\phi)
\text{,}
\end{align}
where $L_m:=8 \pi im \partial_{\tau}-\partial_{z}^2$ is the usual heat-operator.  
The operators $\xiJkm$ and $\xiJskkm$ are also covariant with respect to the actions in (\ref{Jacobi-slash}) and (\ref{skew-slash}): If $\phi$ is a smooth function on $\HH\times\C$ and $A\in G^J$, then
\begin{align}
\label{eq:xiJ_equiv}
  \big(\xiJkm (\phi) \big)\big|^{\sk}_{3-k,m}A
&=
  \xiJkm \Big(\phi\big|_{k,m}A \Big)
  \quad\text{and}
\\[6pt]
  \big(\xiJskkm (\phi) \big)\big|_{3-k,m}A
&=
  \xiJskkm \Big(\phi\big|^{\sk}_{k,m}A\Big)
\text{.}
\end{align}
The actions of the Casimir operators simplify when applied to semi-meromorphic functions.  Precisely, if $\phi$ is semi-meromorphic, then one verifies that
\begin{gather}
\label{eq:semimeromorphic_casimir}
  \cC_{k,m} (\phi)
=
   2 \, \xiJsk_{3 - k, m} \circ \xiJkm (\phi)
\quad\text{and}\quad
  \cC^\sk_{k,m} (\phi)
=
   2 \, \xiJ_{3 - k, m} \circ \xiJskkm (\phi)
\text{.}
\end{gather}

We also consider the space $\JJHskkm$ of \Hharmonic\ skew-Maass-Jacobi forms of weight $k$ and index $m$.  This space consists of functions $\phi$ as in Definition \ref{def:maassjacobiforms}, where conditions $(1)$ and $(2)$ are replaced by
\begin{enumerate}[(1')]
\item For all $A \in \JacF$, we have $\phi \big|_{k,m}^{sk} A = \phi$.\vspace{1ex}
\item \label{it:harmonic_condition_Casimir_skew} We have that $\cC^\sk_{k,m}(\phi)=0$.\vspace{1ex}
\end{enumerate}
The operators $\xiJHskkm$ and $\xiJskkm$ allow us to define skew-Maass-Jacobi versions of the spaces in Table~\ref{tab:maass_jacobi_spaces}.  Specifically, the forms in $\JJHskkm$ that are annihilated by $\xiJskkm$ and $\xiJHskkm$ are denoted by $\JHskkm$ and $\JJskkm$, respectively, and $\Jskkm:=\JHskkm\cap\JJskkm$.  In this paper, we will encounter only the following two subspaces of \Hharmonic\ skew-Maass-Jacobi forms without singularities:  The space $\holJskkm \subseteq \Jskkm$, which contains Skoruppa's skew-holomorphic Jacobi forms (see~\cite{Sko-mpim, Sko-Invent90}), and $\holJJskkm \subseteq \JJskkm$.  Note that Corollary~\ref{cor:semimeromorphic_skew_maass_jacobi_forms} will show that $\JJskkm = \holJJskkm$, and that $\holJJskkm = \{0\}$ if $m < 0$.

Our next task is to describe the Fourier series expansions of \Hharmonic\ Maass-Jacobi forms.  For this purpose we will need the lower incomplete Gamma-function $\gamma(s,x):=\int_0^xt^{s-1}e^{-t}\ dt$ and the function 
\begin{gather}
\label{eq:H}
H(w):=e^{-w}\int_{-2w}^{\infty}t^{\frac{1}{2}-k}e^{-t}\ dt
\text{.}
\end{gather} 
Observe that $H(w)$ converges for $k<\frac{3}{2}$ and has a holomorphic continuation in $k$ if $w \ne 0$.  If $w<0$, then $H(w)=e^{-w}\, \Gamma(\frac{3}{2}-k,-2w)$ (see also page 55 of \cite{B-F-Duke04}), where  $\Gamma(s,x):=\int_x^{\infty}t^{s-1}e^{-t}\ dt$ is the upper incomplete Gamma-function. Throughout, we write $q:=e^{2 \pi i \tau}$ and $\zeta:=e^{2\pi iz}$. 

\begin{proposition}
\label{prop:hharmonic_fourierexpansion}
Suppose $\phi\in\JJHkm$ has a local Fourier series expansion of the form
\begin{gather}
\label{eq:H-Harmonic_Fourier_coefficients}
\sum_{\substack{n, r \in\Z \\D=4mn-r^2  } } c(n,r; \,y,v)q^n\zeta^r
\text{.}
\end{gather}
If $m>0$, then $c(n,r; \,y,v)$ lies in the $2$-dimensional vector space spanned by $c_1(n,r;\, y, v)$ and $c_2(n,r;\, y, v)$ below.  If $m < 0$, then $c(n,r; \,y,v)$ lies in the $4$-dimensional vector space spanned by $c_1(n,r;\, y, v), \ldots, c_4(n,r;\, y, v)$ below. If $D \ne 0$, then
\begin{align*}
  c_1(n, r;\, y, v)
&=
  1
\text{,}
\qquad\quad
  c_2(n, r;\, y, v)
=
  H\Big( \frac{\pi D y}{2 m } \Big) \exp\Big( \frac{\pi D y}{2 m} \Big)
\text{,}
\\[6pt]
  c_3(n, r;\, y, v)
&=
  \sgn\big( r + 2 m \tfrac{v}{y} \big) \,
  \gamma\Big( \tfrac{1}{2}, \tfrac{- \pi y}{m} \big( r + 2 m \tfrac{v}{y} \big)^2 \Big)
\text{,}
\\[6pt]
  c_4(n, r;\, y, v)
&=
  H\Big( \frac{\pi D y}{2 m } \Big) \exp\Big( \frac{\pi D y}{2 m} \Big) \;
  \sgn\big( r + 2 m \tfrac{v}{y} \big) \,
  \gamma\Big( \tfrac{1}{2}, \tfrac{- \pi y}{m} \big( r + 2 m \tfrac{v}{y} \big)^2 \Big) 
\text{.}
\end{align*}
If $D = 0$, then
\begin{align*}
  c_1(n, r;\, y, v)
&=
  1
\text{,}
\qquad\quad
  c_2(n, r;\, y, v)
=
   y^{\frac{3}{2} - k}
\text{,}
\\[6pt]
  c_3(n, r;\, y, v)
&=
  \sgn\big( r + 2 m \tfrac{v}{y} \big) \,
  \gamma\Big( \tfrac{1}{2}, \tfrac{- \pi y}{m} \big( r + 2 m \tfrac{v}{y} \big)^2 \Big)
\text{,}
\\[6pt]
  c_4(n, r;\, y, v)
&=
  y^{\frac{3}{2} - k} \;
  \sgn\big( r + 2 m \tfrac{v}{y} \big) \,
  \gamma\Big( \tfrac{1}{2}, \tfrac{- \pi y}{m} \big( r + 2 m \tfrac{v}{y} \big)^2 \Big)
\text{.}
\end{align*}
\end{proposition}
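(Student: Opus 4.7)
The plan is to substitute the Fourier expansion~\eqref{eq:H-Harmonic_Fourier_coefficients} into the two defining PDEs $\lapH_m(\phi)=0$ and $\cC_{k,m}(\phi)=0$, solve the resulting ODE system for $c(n,r;y,v)$, and then use the growth condition from Definition~\ref{def:maassjacobiforms} to rule out the extraneous solutions when $m>0$.

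First, I tackle the $v$-equation coming from $\lapH_m$. Since $\partial_{\ov z}(q^n\zeta^r)=0$ and $c$ is independent of $x,u$, a direct computation using $Y_-^{k,m}=-iy\,\partial_{\ov z}$ and $Y_+^{k-1,m}=i\partial_z-4\pi m v/y$ gives
\[
\lapH_m(c\,q^n\zeta^r)=\tfrac{y}{4}\bigl(c_{vv}-4\pi w\,c_v\bigr)\,q^n\zeta^r,\qquad w:=r+\tfrac{2mv}{y},
\]
so for each $(n,r)$ we get $c_{vv}=4\pi w\,c_v$. Integrating once gives $c_v=A(y)\exp(4\pi(rv+mv^2/y))$, and completing the square $rv+mv^2/y=(m/y)(v+ry/(2m))^2-r^2y/(4m)$ together with the substitution $u=v'+ry/(2m)$, $t^2=-4\pi m u^2/y$, exhibits the second fundamental solution of this ODE (after absorbing $y$-dependent factors into $A$) as $\sgn(w)\,\gamma(\tfrac12,-\pi y w^2/m)$. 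Thus
\[
c(n,r;y,v)=B(y)+A(y)\,\sgn(w)\,\gamma\!\left(\tfrac12,-\tfrac{\pi y w^2}{m}\right).
\]

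Next I apply $\cC_{k,m}$. Substituting $\phi=c\,q^n\zeta^r$ turns $\partial_\tau,\partial_z$ acting on $q^n\zeta^r$ into the algebraic multiplications $2\pi in,\,2\pi ir$ while derivatives on $c$ remain differential; using the identity $n+rv/y+mv^2/y^2=(D+w^2)/(4m)$ and the $v$-ODE from the previous step to eliminate higher $v$-derivatives, one reduces $\cC_{k,m}(\phi)=0$ to a second-order linear ODE in $y$ (with parameter~$D$) imposed separately on $B(y)$ and on $A(y)$. For $D\ne 0$ this ODE is of incomplete-gamma type, whose fundamental solutions are $\{1,\,H(\pi Dy/(2m))\,e^{\pi Dy/(2m)}\}$; for $D=0$ the characteristic exponents collide and the second solution is $y^{3/2-k}$. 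Pairing the two $v$-modes with the two $y$-modes produces the four candidates $c_1,\ldots,c_4$, which is the full statement for $m<0$.

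Finally, for $m>0$ I rule out $c_3,c_4$ using the growth condition. The integrand $e^{4\pi(rv+mv^2/y)}$ is anti-Gaussian in $v$ when $m>0$, so $c_3$ (and hence $c_4$) grows like $e^{4\pi m v^2/y}$ as $|v|\to\infty$. Along a rational line $z=\alpha\tau+\beta$ one has $v=\alpha y$, and a term $A(y)\,c_3\,q^n\zeta^r$ thus contributes a factor of size $|A(y)|\,e^{(4\pi m\alpha^2-2\pi(n+r\alpha))y}$; since the exponent can be made arbitrarily large positive by choosing $\alpha$ sufficiently large (and $(n,r)$ accordingly), this is incompatible with the growth condition unless all such $A$-coefficients vanish. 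Hence $A(y)\equiv 0$, and only $\lspan\{c_1,c_2\}$ remains.

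I expect the bookkeeping of the Casimir step to be the main obstacle: $\cC_{k,m}$ mixes all of $X_\pm^{k,m}$ and $Y_\pm^{k,m}$, so one has to carefully track the cancellations among the algebraic $(n,r)$-pieces, the $D$ and $w^2$ terms, and the $(k-2)/y$ contribution of $X_+^{k-2,m}$ in order to recognize the incomplete-gamma ODE in $y$; additionally, the degenerate case $D=0$ must be handled separately, where the second solution of that ODE is $y^{3/2-k}$ rather than the confluent-hypergeometric one.
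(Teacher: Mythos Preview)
Your treatment of the $v$- and $y$-ODEs is essentially the same as the paper's (the paper just runs it backward: verify that $c_1,\ldots,c_4$ are annihilated by $\lapH_m$ and $\cC_{k,m}$, then do a dimension count to see that these four span). So the $m<0$ part is fine, if sketchy in the Casimir bookkeeping.

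The real gap is your argument for $m>0$. The growth condition in Definition~\ref{def:maassjacobiforms}(4) reads: for \emph{each} admissible pair $(\alpha,\beta)$ there exists \emph{some} $a>0$ with $\phi(\tau,\alpha\tau+\beta)=O(e^{ay})$. The constant $a$ is allowed to depend on $\alpha$ and $\beta$. Your computation shows that along $z=\alpha\tau+\beta$ a single $c_3$-term has modulus $\asymp e^{C(\alpha,n,r)\,y}$ with $C(\alpha,n,r)$ quadratic in $\alpha$; but for fixed $\alpha$ this is still linear exponential in $y$, hence perfectly compatible with condition~(4). Making $\alpha$ large only forces you to take a larger $a$, which the definition permits. (The paper even remarks, right after Definition~\ref{def:maassjacobiforms}, that the growth condition is used \emph{only} for Theorems~\ref{thm:intro_thetadecomposition} and~\ref{thm:hharmonic_maass_jacobi_forms_thetadecomposition}, not here.) Moreover, the growth condition constrains the full sum $\phi$, and isolating a single Fourier coefficient by integrating in $x,u$ does not directly control growth in~$v$; so even passing from $\phi$ to one term is not justified.

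The paper's route is different and does not touch growth at all: apply $\xiJHkm$ to $\phi$. By~\eqref{eq:xi_equiv} and~\eqref{eq:lapH_factor} the image is a semi-meromorphic skew-Maass-Jacobi form of index~$-m$, and Corollary~\ref{cor:singularities_of_maass_jacobi_forms} (proved in Section~\ref{sec:singluarjacobiforms} by Weierstrass preparation / several-complex-variables arguments, independently of the present proposition) shows that this image in fact has no singularities. A nonzero semi-holomorphic form of index~$-m$ forces $-m>0$ (residue theorem, as in \cite[Theorem~1.2]{EZ}). Hence for $m>0$ one has $\xiJHkm(\phi)=0$, which on the level of Fourier coefficients kills exactly the $c_3,c_4$ components. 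To fix your proof you should replace the growth argument by this $\xiJH$-argument.
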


\begin{proof}
It is easy to verify that all $c_i(n,r;\, y, v)q^n\zeta^r$ are in the kernels of $\cC_{k,m}$ and $\lapH_m$. Moreover, for each $(n,r)$ the differential equation $\cC_{k,m}\bigl(c(n,r; \,y,v)q^n\zeta^r\bigr)=0$ has at most four linear independent solutions, which can be seen as follows:  For fixed $n, r$, and $y$ the differential equation for $c(n,r; \,y,v)$ arising from $\lapH_m\big( c(n,r; \,y,v) \, q^n\zeta^r \big) = 0$ has order~$2$, hence leading to at most two linear independent solutions $f_i(n, r;\, y,v)$ with coefficients $d_i(n, r;\, y)$ ($i = 1, 2$).  Imposing $\cC_{k,m}\big( d_i(n, r;\, y) f_i(n, r;\, y,v) \, q^n \zeta^r \big) = 0$ yields a differential equation of order~$2$ for each $d_i(n, r;\, y)$.  Thus, there are at most two linear independent solutions for each $d_i(n, r;\, y)$, and hence at most four linear independent solutions for $c(n,r; \,y,v)$.

In Section \ref{sec:singluarjacobiforms} we will prove Corollary~\ref{cor:singularities_of_maass_jacobi_forms}, which implies that $\psi:=\xiJHkm(\phi)$ has no singularities.  In particular, $\psi$ is a semi-holomorphic skew-Maass-Jacobi form of index $-m$ (observe (\ref{eq:xi_equiv}) and (\ref{eq:lapH_factor})), and if $\psi\not=0$, then $-m > 0$.  Thus, if $m>0$, then $c(n,r; \,y,v)$ in (\ref{eq:H-Harmonic_Fourier_coefficients}) is a linear combination of the semi-holomorphic solutions $c_1(n,r;\, y, v)$ and $c_2(n,r;\, y, v)$.
\end{proof}

The situation for \Hharmonic\ skew-Maass-Jacobi forms is very similar.  We only record the result on their Fourier coefficients and omit the proof.

\begin{proposition}
\label{prop:skew-hharmonic_fourierexpansion}
Let $\phi\in\JJHskkm$ such that $\xiJHskkm(\phi)$ has no singularities and suppose that $\phi$ has a local Fourier series expansion of the form
\begin{gather}
\sum_{\substack{n, r \in\Z \\D=4mn-r^2  } } c^\sk(n,r; \,y,v)q^n\zeta^r
\text{.}
\end{gather}
If $m>0$, then $c^\sk(n,r; \,y,v)$ lies in the $2$-dimensional vector space spanned by $c^\sk_1(n,r;\, y, v)$ and $c^\sk_2(n,r;\, y, v)$ below.  If $m < 0$, then $c^\sk(n,r; \,y,v)$ lies in the $4$-dimensional vector space spanned by $c^\sk_1(n,r;\, y, v), \ldots, c^\sk_4(n,r;\, y, v)$ below. If $D \ne 0$, then
\begin{align*}
  c_1^\sk (n,r; \,y,v)
&=
  \exp\Big( \frac{\pi D y}{m} \Big)
\text{,}
\qquad\quad
  c_2^\sk (n,r; \,y,v)
=
  H\Big( \frac{- \pi D y}{2 m } \Big) 
  \exp\Big( \frac{\pi D y}{2 m} \Big)
\text{,}
\\[6pt]
  c_3^\sk (n,r; \,y,v)
&=
  \exp\Big( \frac{\pi D y}{m} \Big) \;
  \sgn\big( r + 2 m \tfrac{v}{y} \big) \,
  \gamma\Big( \tfrac{1}{2}, \tfrac{- \pi y}{m} \big( r + 2 m \tfrac{v}{y} \big)^2 \Big)
\text{,}
\\[6pt]
  c_4^\sk (n,r; \,y,v)
&=
  H\Big( \frac{- \pi D y}{2 m } \Big)
  \exp\Big( \frac{\pi D y}{2 m} \Big) \;
  \sgn\big( r + 2 m \tfrac{v}{y} \big) \,
  \gamma\Big( \tfrac{1}{2}, \tfrac{- \pi y}{m} \big( r + 2 m \tfrac{v}{y} \big)^2 \Big)
\text{.}
\end{align*}
If $D = 0$, then
\begin{align*}
  c_1^\sk (n,r; \,y,v)
&=
  1
\text{,}
\qquad\quad
  c_2^\sk (n,r; \,y,v)
=
  y^{\frac{3}{2} - k}
\text{,}
\\[6pt]
  c_3^\sk (n,r; \,y,v)
&=
  \sgn\big( r + 2 m \tfrac{v}{y} \big) \,
  \gamma\Big( \tfrac{1}{2}, \tfrac{- \pi y}{m} \big( r + 2 m \tfrac{v}{y} \big)^2 \Big)
\text{,}
\\[6pt]
  c_4^\sk (n,r; \,y,v)
&=
  y^{\frac{3}{2} - k} \,
  \sgn\big( r + 2 m \tfrac{v}{y} \big) \,
  \gamma\Big( \tfrac{1}{2}, \tfrac{- \pi y}{m} \big( r + 2 m \tfrac{v}{y} \big)^2 \Big)
\text{.}
\end{align*}
\end{proposition}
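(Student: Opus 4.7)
The plan is to mirror the three-step strategy of Proposition~\ref{prop:hharmonic_fourierexpansion}, adapted from the non-skew to the skew setting: (i) verify by direct substitution that each candidate $c_i^\sk(n,r;\,y,v)\,q^n\zeta^r$ lies in the common kernel of $\lapH_m$ and $\cC^\sk_{k,m}$; (ii) count dimensions to show that the four candidates span the full space of solutions; and (iii) use the hypothesis on $\xiJHskkm(\phi)$ to eliminate $c_3^\sk$ and $c_4^\sk$ in the case $m>0$.

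For step (i), I would substitute each $c_i^\sk\,q^n\zeta^r$ into $\lapH_m = Y^{\sk;\,k+1,m}_+ Y^{\sk;\,k,m}_-$ and into $\cC^\sk_{k,m}$. The $v$-dependence of the candidates enters only through the combination $r + 2m v/y$, so the Heisenberg-harmonic equation becomes an ODE in that variable with fundamental system $\{1,\ \sgn(r+2mv/y)\,\gamma(\tfrac12, -\tfrac{\pi y}{m}(r+2mv/y)^2)\}$. The skew-Casimir equation then reduces to a second-order ODE in $y$ for the remaining $y$-profile; a routine calculation, analogous to the one that underlies Proposition~\ref{prop:hharmonic_fourierexpansion} and which uses the conjugation relation $\cC^\sk_{k,m} = 8\pi i m\,y^{1/2-k}\cC_{1-k,m}\,y^{k-1/2}+2k-1$, yields the two $y$-profiles $\exp(\pi D y/m)$ and $H(-\pi D y/(2m))\exp(\pi D y/(2m))$ when $D\ne 0$. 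The sign flip inside $H$ (compared with the non-skew case) is exactly what the shift $k\mapsto 1-k$ in the Casimir conjugation produces. The $D=0$ case requires a separate verification; the ansatz $y^{3/2-k}$ is easily checked to satisfy the skew-Casimir equation when $D=0$.

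For step (ii), as in the non-skew proof I would exploit the factorisation $\lapH_m = Y^{\sk;\,k+1,m}_+ Y^{\sk;\,k,m}_-$ to observe that, at fixed $n$, $r$ and $y$, the equation $\lapH_m\bigl(c^\sk\,q^n\zeta^r\bigr)=0$ is a second-order linear ODE in $v$, admitting at most two linearly independent $v$-profiles $f_i^\sk(n,r;\,y,v)$. Since $\lapH_m$ and $\cC^\sk_{k,m}$ commute, the skew-Casimir preserves this two-dimensional space of $v$-profiles; writing a general solution as $d_1(y)f_1^\sk + d_2(y)f_2^\sk$ and imposing $\cC^\sk_{k,m}(\cdots)=0$ yields a second-order ODE in $y$ for each $d_i$, for a total of at most four linearly independent solutions. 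Together with step (i), this forces the solution space to coincide with $\lspan\{c_1^\sk,\dots,c_4^\sk\}$.

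For step (iii), observe that among the four candidates only $c_1^\sk$ and $c_2^\sk$ are independent of $v$, i.e., only those give summands annihilated by $Y_-^{\sk;\,k,m}=-i\partial_{\ov z}$. The equivariance~\eqref{eq:xi_equiv} for the skew $\xi$-operator together with the factorisation $\lapH_m = \xiJH_{k,-m}\circ\xiJHskkm$ in~\eqref{eq:lapH_factor} shows that $\xiJHskkm(\phi)$ is a semi-holomorphic Maass-Jacobi form without singularities of weight $k$ and index $-m$, hence an element of $\holJJ_{k,-m}$. For $m>0$ the index $-m$ is negative, so Theorem~\ref{thm:structure_maass_jacobi_forms}~(\ref{thm:it:singularity_free_hharmonic}) forces $\xiJHskkm(\phi)=0$, i.e., $Y_-^{\sk;\,k,m}(\phi)=0$; the coefficients $c^\sk(n,r;\,y,v)$ therefore have no $v$-dependence and must lie in $\lspan\{c_1^\sk,c_2^\sk\}$. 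The main obstacle I anticipate is the bookkeeping in step (i), in particular matching the $D\to 0$ degeneration of the non-holomorphic mode with the $y^{3/2-k}$ ansatz, since the prefactors $H(-\pi D y/(2m))\exp(\pi D y/(2m))$ degenerate differently for different values of $k$ and must be rescaled before taking the limit.
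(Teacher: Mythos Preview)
Your proposal is correct and is precisely the argument the paper has in mind: the authors omit the proof of this proposition, stating only that ``the situation for \Hharmonic\ skew-Maass-Jacobi forms is very similar,'' so the intended proof is exactly the three-step mirror of Proposition~\ref{prop:hharmonic_fourierexpansion} that you outline. One small refinement for step~(iii): rather than appealing to Theorem~\ref{thm:structure_maass_jacobi_forms}~(\ref{thm:it:singularity_free_hharmonic}) (which is proved later and whose full statement needs the Casimir condition on $\xiJHskkm(\phi)$ that you have not verified), it is cleaner to invoke directly the elementary fact---used repeatedly in the paper and going back to \cite[Theorem~1.2]{EZ}---that a nonzero semi-holomorphic function satisfying the elliptic transformation law of index $-m$ forces $-m>0$; this uses only what you have already established and avoids any appearance of circularity.
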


The $\xi$-operators in (\ref{eq:xiJH}), (\ref{eq:xiJHsk}), (\ref{eq:xiJ}), and (\ref{eq:xiJsk}) provide the following interplay between the Fourier coefficients in Proposition~\ref{prop:hharmonic_fourierexpansion} and Proposition~\ref{prop:skew-hharmonic_fourierexpansion}.

\begin{proposition}
\label{prop:images_of_xi}
Let $c_i(n, r;\, y, v)$ and $c^\sk_i(n, r;\, y, v)$ be the Fourier coefficients in Proposition~\ref{prop:hharmonic_fourierexpansion} and Proposition~\ref{prop:skew-hharmonic_fourierexpansion}, respectively.  With an abuse of notation we write $\widetilde{c}_i:=\widetilde{c}_i[k, m, n, r]:= c_i(n,r; \,y,v)q^n\zeta^r$ and $\widetilde{c}_i^\sk:=\widetilde{c}_i^\sk[k,m,n,r]:= c_i^\sk(n,r; \,y,v)q^n\zeta^r$.  If $D \not= 0$, then 
\begin{align*}
 \xiJkm \bigl(\widetilde{c}_1\bigr) &= 0
\text{,}
&
  \xiJkm \bigl(\widetilde{c}_2\bigr) &= -\left(\tfrac{-\pi D}{m}\right)^{\frac{3}{2}-k} \; \widetilde{c}_1^\sk[3-k, m,n,r]
\text{,}
\\[3pt] 
  \xiJkm \bigl(\widetilde{c}_3\bigr) &= 0
\text{,}
&
  \xiJkm \bigl(\widetilde{c}_4\bigr) &= -\left(-\tfrac{\pi D}{m}\right)^{\frac{3}{2}-k} \; \widetilde{c}_3^\sk[3-k,m,n,r]
\text{,}
\\[15pt]
  \xiJHkm \bigl(\widetilde{c}_1\bigr) &= 0
\text{,}
&
  \xiJHkm \bigl(\widetilde{c}_3\bigr) &= -2\sqrt{\pi} \; \widetilde{c}_1^\sk[k, -m,-n,-r]
\text{,}
\\[3pt]
  \xiJHkm \bigl(\widetilde{c}_2\bigr) &= 0
\text{,}
&
  \xiJHkm \bigl(\widetilde{c}_4\bigr) &= -2\sqrt{\pi} \; \widetilde{c}_2^\sk[k, -m,-n,-r]
\text{,}
\\[15pt]
  \xiJskkm\bigl (\widetilde{c}_1^\sk\bigr) &= 0
\text{,}
&
  \xiJskkm (\widetilde{c}_2^\sk) &= -\left(\tfrac{\pi D}{m}\right)^{\frac{3}{2}-k} \; \widetilde{c}_1[3-k, m,n,r]
\text{,}
\\[3pt]
  \xiJskkm\bigl (\widetilde{c}_3^\sk\bigr) &= 0
\text{,}
&
  \xiJskkm (\widetilde{c}_4^\sk) &= -\left(\tfrac{\pi D}{m}\right)^{\frac{3}{2}-k}  \; \widetilde{c}_3[3-k, m,n,r]
\text{,}
\\[15pt]
 \xiJHskkm \bigl(\widetilde{c}_1^\sk\bigr) &= 0
\text{,}
&
  \xiJHskkm \bigl(\widetilde{c}_3^\sk\bigr) &= -2\sqrt{\pi} \; \widetilde{c}_1[k,-m, -n,-r] 
\text{,}
\\[3pt]
 \xiJHskkm \bigl(\widetilde{c}_2^\sk\bigr) &= 0
\text{,}
&
 \xiJHskkm \bigl(\widetilde{c}_4^\sk\bigr) &= -2\sqrt{\pi} \; \widetilde{c}_2[k, -m,-n,-r]
 \text{.}
\end{align*}

\vspace{1ex}

If $D= 0$, then 
\begin{align*}
 \xiJkm \bigl(\widetilde{c}_1\bigr) &= 0
\text{,}
&
  \xiJkm \bigl(\widetilde{c}_2\bigr) &= (\tfrac{3}{2}-k) \; \widetilde{c}_1^\sk[3-k, m,n,r]
\text{,}
\\[3pt] 
  \xiJkm \bigl(\widetilde{c}_3\bigr) &= 0
\text{,}
&
  \xiJkm \bigl(\widetilde{c}_4\bigr) &= (\tfrac{3}{2}-k) \; \widetilde{c}_3^\sk[3-k,m,n,r]
\text{,}
\\[15pt]
  \xiJHkm \bigl(\widetilde{c}_1\bigr) &= 0
\text{,}
&
  \xiJHkm \bigl(\widetilde{c}_3\bigr) &= -2\sqrt{\pi} \; \widetilde{c}_1^\sk[k, -m, -n,-r]
\text{,}
\\[3pt]
  \xiJHkm \bigl(\widetilde{c}_2\bigr) &= 0
\text{,}
&
  \xiJHkm \bigl(\widetilde{c}_4\bigr) &= -2\sqrt{\pi} \; \widetilde{c}_2^\sk[k, -m,-n,-r]
\text{,}
\\[15pt]
  \xiJskkm\bigl (\widetilde{c}_1^\sk\bigr) &= 0
\text{,}
&
  \xiJskkm (\widetilde{c}_2^\sk) &= (\tfrac{3}{2}-k) \; \widetilde{c}_1[3-k, m,n,r]
\text{,}
\\[3pt]
  \xiJskkm\bigl (\widetilde{c}_3^\sk\bigr) &= 0
\text{,}
&
  \xiJskkm (\widetilde{c}_4^\sk) &= (\tfrac{3}{2}-k) \; \widetilde{c}_3[3-k, m,n,r]
\text{,}
\\[15pt]
 \xiJHskkm \bigl(\widetilde{c}_1^\sk\bigr) &= 0
\text{,}
&
  \xiJHskkm \bigl(\widetilde{c}_3^\sk\bigr) &= -2\sqrt{\pi} \; \widetilde{c}_1[k,-m, -n,-r]
\text{,}
\\[3pt]
 \xiJHskkm \bigl(\widetilde{c}_2^\sk\bigr) &= 0
\text{,}
&
 \xiJHskkm \bigl(\widetilde{c}_4^\sk\bigr) &=-2\sqrt{\pi} \; \widetilde{c}_2[k, -m,-n,-r]
 \text{.}
\end{align*}

\end{proposition}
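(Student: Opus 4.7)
The plan is to verify all formulas by a direct, case-by-case computation using the explicit definitions \eqref{eq:xiJ}--\eqref{eq:xiJHsk} of the four $\xi$\nbd operators and the explicit shapes of the basis functions $\widetilde{c}_i$, $\widetilde{c}_i^\sk$ from Propositions~\ref{prop:hharmonic_fourierexpansion} and~\ref{prop:skew-hharmonic_fourierexpansion}. Throughout I would set $A := r + 2mv/y$ and use $\partial_{\ov z} v = i/2$ together with $y = (\tau-\ov\tau)/(2i)$ to get $\partial_{\ov z} A = im/y$ and $\partial_{\ov\tau} A = -imv/y^2$, which are the only chain-rule inputs needed.

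I would first dispatch the trivial annihilations. Since $\widetilde{c}_1 = q^n\zeta^r$ is holomorphic in $\tau$ and in $z$, every $\xi$\nbd operator (each containing $\partial_{\ov\tau}$ or $\partial_{\ov z}$) sends it to $0$. The factors beyond $q^n\zeta^r$ in $\widetilde{c}_2$ and $\widetilde{c}_2^\sk$ depend only on $y$, hence are holomorphic in $z$; so $\xiJHkm$ and $\xiJHskkm$, which are built from $\partial_{\ov z}$, annihilate them. The remaining annihilations, for example $\xiJkm(\widetilde{c}_3) = 0$, require a genuine cancellation, namely
\begin{gather*}
  X_-^{k,m}(\widetilde{c}_3) \;=\; \tfrac{1}{4\pi m} Y_-^{k-1,m} Y_-^{k,m}(\widetilde{c}_3) \;=\; -\tfrac{yA}{m}\sqrt{-\pi m y}\,\exp(\pi y A^2/m)\,q^n\zeta^r,
\end{gather*}
which follows from the algebraic identity $ry/m + 2v = Ay/m$ used to collect like terms after applying each operator.

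For the nonzero images under $\xiJHkm$, the chain rule gives $\partial_{\ov z}\gamma\bigl(\tfrac{1}{2}, -\pi y A^2/m\bigr) = -2\pi i(-\pi y/m)^{-1/2}\sgn(A)\exp(\pi y A^2/m)$, and therefore $Y_-^{k,m}(\widetilde{c}_3) = -2\sqrt{-\pi m y}\,\exp(\pi y A^2/m)\,q^n\zeta^r$. Taking the complex conjugate via $\ov{q^n\zeta^r} = q^{-n}\zeta^{-r}\exp(-4\pi(ny+rv))$, multiplying by $\sqrt{-my}^{\,-1}\exp(-4\pi m v^2/y)$, and simplifying
\begin{gather*}
  -4\pi m v^2/y + \pi y A^2/m - 4\pi(ny+rv) \;=\; -\pi D y/m,
\end{gather*}
we arrive at $\xiJHkm(\widetilde{c}_3) = -2\sqrt{\pi}\,\widetilde{c}_1^\sk[k,-m,-n,-r]$. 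The formula for $\widetilde{c}_4$ follows identically since its extra factor $H(\pi D y/(2m))\exp(\pi D y/(2m))$ depends only on $y$, hence commutes through both $\partial_{\ov z}$ and complex conjugation; the same exponent telescoping (with one further use of $\exp(\pi D y/(2m))\exp(-\pi D y/m) = \exp(-\pi D y/(2m))$) yields $-2\sqrt{\pi}\,\widetilde{c}_2^\sk[k,-m,-n,-r]$. For $\xiJkm(\widetilde{c}_2)$, the semi-holomorphy of $\widetilde{c}_2$ kills the $Y_-$\nbd piece, and $X_-^{k,m}$ acts by $X_-^{k,m}(F\cdot q^n\zeta^r) = y^2 F'(y) q^n\zeta^r$ for $F(y) := H(\pi Dy/(2m))\exp(\pi Dy/(2m))$. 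The derivative identity $H'(w) = -H(w) + 2 e^w(-2w)^{1/2-k}$, obtained by differentiating $H(w) = e^{-w}\int_{-2w}^\infty t^{1/2-k}e^{-t}\,dt$, forces the $H$\nbd part to cancel, leaving $F'(y) = (\pi D/m)(-\pi D y/m)^{1/2-k}\exp(\pi D y/m)$ and therefore the claimed constant $-(-\pi D/m)^{3/2-k}$; the $\widetilde{c}_4$ case combines this calculation with $Y_-^{k,m}(\widetilde{c}_4) = F\cdot Y_-^{k,m}(\widetilde{c}_3)$. The skew-operator cases are parallel, and the identity $\xiJskkm = \tfrac{1}{4\pi m} y^{k-1/2} L_m$ with $L_m = 8\pi i m\partial_\tau - \partial_z^2$ makes several of them immediate. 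The $D=0$ formulas follow by direct computation after replacing $H\cdot\exp$ by $y^{3/2-k}$; for instance $\xiJkm(y^{3/2-k}q^n\zeta^r) = (3/2 - k) q^n\zeta^r$, which coincides with $(3/2 - k)\,\widetilde{c}_1^\sk[3-k,m,n,r]$ since $c_1^\sk = 1$ when $D = 0$.

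The main obstacle is bookkeeping: eight basis functions times four operators times the two regimes $D\ne 0$ and $D = 0$ yield many individual verifications, each requiring care with square-root branches, signs, and complex conjugations. Fortunately the factorizations $\lapH_m = \xiJHsk_{k,-m}\circ\xiJHkm$ from~\eqref{eq:lapH_factor} and $\cC_{k,m}(\phi) = 2\,\xiJsk_{3-k,m}\circ\xiJkm(\phi)$ on semi-meromorphic $\phi$ from~\eqref{eq:semimeromorphic_casimir} provide a strong consistency check: they force the image of each $\widetilde{c}_i$ under any of our operators to lie in the span of basis functions of the target space, reducing the remaining work to pinning down the scalar coefficients.
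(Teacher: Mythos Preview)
Your proposal is correct and follows the same approach as the paper: the paper's proof consists of the single sentence ``All identities of the proposition follow from straightforward computations'' (after noting the covariance properties \eqref{eq:xi_equiv} and \eqref{eq:xiJ_equiv}), and you have simply carried out a representative sample of those computations explicitly, with the correct auxiliary identities and simplifications.
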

\begin{proof}
Observe the covariance properties of the $\xi$-operators in (\ref{eq:xi_equiv}) and (\ref{eq:xiJ_equiv}). All identities of the proposition follow from straightforward computations.
\end{proof}

Let $\cF\cE^{\rm J}$ denote the space of real-analytic functions $\HS \times \CC \rightarrow \CC$ that admit a local Fourier series expansion at some point.   \Hharmonic\ Maass-Jacobi forms and \Hharmonic\ skew-Maass-Jacobi forms that have local Fourier series expansions are connected in a natural way via the $\xi$-operators in (\ref{eq:xiJH}), (\ref{eq:xiJHsk}), (\ref{eq:xiJ}), and (\ref{eq:xiJsk}), and the following corollary is a direct consequence of Proposition~\ref{prop:hharmonic_fourierexpansion}, Proposition~\ref{prop:skew-hharmonic_fourierexpansion}, and Proposition~\ref{prop:images_of_xi}.

\begin{corollary}
\label{cor:commuting_xi_operators}
The following diagrams are commutative:
\begin{gather*}
\xymatrix{
  \cF\cE^{\rm J} \cap \JHsk_{3 - k, m}  \ar[d]^{\xiJHsk_{3 - k, m}} & \cF\cE^{\rm J} \cap \JJHkm        \ar[l]_{\xiJkm} \ar[d]^{\xiJHkm} \\
  \cF\cE^{\rm J} \cap \J_{3 - k, -m}                            & \cF\cE^{\rm J} \cap \JJsk_{k, -m}  \ar[l]_{\xiJsk_{k, -m}}
}
\qquad
\xymatrix{
  \cF\cE^{\rm J} \cap \JH_{3 - k, m}  \ar[d]^{\xiJH_{3 - k, m}} & \cF\cE^{\rm J} \cap \JJHskkm   \ar[l]_{\xiJskkm} \ar[d]^{\xiJHskkm} \\
  \cF\cE^{\rm J} \cap \Jsk_{3 - k, -m}                      & \cF\cE^{\rm J} \cap \JJ_{k, -m}  \ar[l]_{\xiJ_{k, -m}}
}
\end{gather*}
\end{corollary}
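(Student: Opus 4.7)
The plan is to verify both commutative squares by reducing to the explicit Fourier data gathered in Propositions~\ref{prop:hharmonic_fourierexpansion}, \ref{prop:skew-hharmonic_fourierexpansion}, and~\ref{prop:images_of_xi}. By definition, any $\phi \in \cF\cE^{\rm J} \cap \JJHkm$ admits a local Fourier series expansion whose coefficients lie in the span of the basis functions $\widetilde{c}_i[k,m,n,r]$, and similarly in the skew case. Since the four $\xi$-operators act term-by-term on such expansions, it suffices to check the mapping properties of the arrows and the commutativity of the squares on each basis element $\widetilde{c}_i[k,m,n,r]$ (or $\widetilde{c}_i^\sk[k,m,n,r]$) individually.

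For the mapping properties, I would read off from Proposition~\ref{prop:images_of_xi} that each of $\xiJkm(\widetilde{c}_i)$, $\xiJHkm(\widetilde{c}_i)$, $\xiJskkm(\widetilde{c}_i^\sk)$, and $\xiJHskkm(\widetilde{c}_i^\sk)$ is either zero or a scalar multiple of some $\widetilde{c}_j^\sk$ (respectively $\widetilde{c}_j$) with shifted parameters. These images automatically satisfy the required Casimir and Heisenberg-Laplace annihilations, since the basis functions themselves lie in the kernels of $\cC$ and $\lapH_m$ (as observed in the proof of Proposition~\ref{prop:hharmonic_fourierexpansion}), and the equivariance relations~\eqref{eq:xi_equiv} and~\eqref{eq:xiJ_equiv} take care of the modular transformation behavior. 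The additional kernel conditions defining the target spaces $\JHsk$, $\JJsk$, $\JH$, $\JJ$, and $\J$ are obtained from a second application of the same table in Proposition~\ref{prop:images_of_xi}.

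For commutativity of the left-hand square, I would verify that $\xiJHsk_{3-k,m} \circ \xiJkm = \xiJsk_{k,-m} \circ \xiJHkm$ on each $\widetilde{c}_i[k,m,n,r]$. For $i = 1, 2, 3$ both sides vanish directly from the zero entries in the table. For $i = 4$ and $D \ne 0$, one route yields $\xiJHsk_{3-k,m}(\xiJkm(\widetilde{c}_4)) = 2\sqrt{\pi}(-\pi D / m)^{3/2 - k}\, \widetilde{c}_1[3-k,-m,-n,-r]$, while the other gives $\xiJsk_{k,-m}(\xiJHkm(\widetilde{c}_4))$, which on substituting $(k,-m,-n,-r)$ into the formula for $\xiJskkm$ applied to $\widetilde{c}_2^\sk$ (and using that $4(-m)(-n) - (-r)^2 = D$) yields exactly the same expression. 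The $D = 0$ case is handled identically, with the constant factor $\tfrac{3}{2} - k$ replacing the power of $-\pi D/m$. The right-hand square is verified by the same bookkeeping with the roles of $\widetilde{c}_i$ and $\widetilde{c}_i^\sk$ exchanged and the skew and non-skew $\xi$-operators swapped.

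The main obstacle will be the parameter bookkeeping. The operators $\xiJH$ and $\xiJHsk$ send the label $(k, m, n, r)$ to $(k, -m, -n, -r)$, and one must propagate this substitution correctly through the scalar prefactors such as $(-\pi D / m)^{3/2 - k}$, using the invariance of $D = 4mn - r^2$ under the simultaneous sign change of $m$, $n$, and $r$. Once these substitutions are tracked carefully, the commutativity follows by direct inspection of Proposition~\ref{prop:images_of_xi}.
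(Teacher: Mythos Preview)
Your proposal is correct and follows exactly the paper's approach: the paper simply states that the corollary is a direct consequence of Propositions~\ref{prop:hharmonic_fourierexpansion}, \ref{prop:skew-hharmonic_fourierexpansion}, and~\ref{prop:images_of_xi}, and you have spelled out the term-by-term verification on the Fourier basis that this entails. Your bookkeeping for the $i=4$ case (tracking the substitution $(k,m,n,r)\mapsto(k,-m,-n,-r)$ and the invariance of $D$) is accurate and matches what the paper leaves implicit.
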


\begin{remark}
\ 
\begin{enumerate}[(1)]
\item Any \Hharmonic\ Maass-Jacobi form that has non-moving singularities admits a local Fourier series expansion.
\item It will follow from Theorem~\ref{thm:structure_maass_jacobi_forms} and Proposition~\ref{prop:singular_skew_maass_jacobi_forms} that the left diagram in~Corollary \ref{cor:commuting_xi_operators} is already commutative when omitting the intersections with~$\cF\cE^{\rm J}$.
\end{enumerate}
\end{remark}

\section{Maass-Jacobi forms with singularities}
\label{sec:singluarjacobiforms}

In this section, we investigate the singularities of \Hharmonic\ (skew)-Maass-Jacobi forms and prove Theorem~\ref{thm:structure_maass_jacobi_forms}~(\ref{thm:it:semimeromorphic_maass_jacobi_forms}). A key ingredient is the next proposition, which relies on the theory of several complex variables.
\begin{proposition}
\label{prop:laurent_expansion}
Let $\phi :\, \HS \times \CC \rightarrow \CC$ be a real-analytic function except for possible singularities of type $f g^{-1}$, where $f$ and $g$ are real-analytic, such that the singularities of $\phi(\tau, \,\cdot\,)$ are isolated for every $\tau \in \HS$. Suppose that $\lapH_m(\phi)=0$ for some half-integer $m$.  Then either $\phi$ has no singularities, or there exist $\tau_0 \in \HS$ and real-analytic $z_0 :\, \HS \rightarrow \CC$ such that $\phi$ has a Laurent series expansion of the form
\begin{gather}
\label{eq:laurent_expansion_general}
  \sum_{n > -N, n' > -N'} c_{n,n'}(\tau) \big(z - z_0(\tau)\big)^n \ov{\big(z - z_0(\tau)\big)}^{\, n'}
\end{gather}
around $\big( \tau_0, z_0(\tau_0) \big) \in \HS \times \CC$.  In particular, if $\phi$ is semi-meromorphic, then its Laurent series expansion around $\big( \tau_0, z_0(\tau_0) \big) \in \HS \times \CC$ equals
\begin{gather}
\label{eq:laurent_expansion_semimeromorph}
  \sum_{n > -N} c_n(\tau) \big(z - z_0(\tau)\big)^n
\text{.}
\end{gather}
\end{proposition}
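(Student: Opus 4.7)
The plan is to exploit the factorization
\[
  \lapH_m
=
  Y_+^{k-1,m} \circ Y_-^{k,m}
=
  \bigl( y\partial_z + 4\pi imv \bigr) \circ \partial_{\ov z},
\]
which is immediate from the definitions in Section~\ref{sec:differentialoperators}. Applied to $\phi$, this yields that $\psi := \partial_{\ov z}\phi$ satisfies the first-order linear PDE $\bigl( y\partial_z + 4\pi imv \bigr)\psi = 0$ on the regular set of $\phi$. A short integrating-factor computation (using $v\,dz = i\,dv^2$ at fixed $\ov z$) then shows that $\tilde\psi := e^{-4\pi m v^2 / y}\psi$ is annihilated by $\partial_z$, and hence depends only on $\tau$ and $\ov z$. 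Integrating once more in $\ov z$ gives a local decomposition
\[
  \phi(\tau, z, \ov z) = \Phi_1(\tau, z) + \Phi_2(\tau, z, \ov z),
\]
in which $\Phi_1(\tau, \,\cdot\,)$ is holomorphic in $z$ for each fixed $\tau$ and $\Phi_2$ is a $\ov z$-primitive of $e^{4\pi m v^2 / y}\tilde\psi$. Both pieces inherit from $\phi$ the property of having only isolated singularities of real-analytic type in each $\tau$-slice.

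Next, assuming $\phi$ is not everywhere regular, I would pick $\tau_0 \in \HS$ and $z_1 \in \CC$ with $\phi$ singular at $(\tau_0, z_1)$ and write $\phi = f g^{-1} + h$ locally with $f, g, h$ real-analytic and $g(\tau_0, z_1) = 0$. From the isolation of the singularity in the $\tau_0$-slice, together with the real-analytic Weierstrass preparation theorem and the implicit function theorem applied to $g$, one obtains a real-analytic function $z_0 :\, U \to \CC$ on some neighborhood $U$ of $\tau_0$, with $z_0(\tau_0) = z_1$, locally parametrizing the singular locus of $\phi$. Since the singular set of $\phi$ is a closed real-analytic subset of $\HS \times \CC$ with discrete fibers over $\HS$, and since $\HS$ is simply connected, the germ $z_0$ extends by real-analytic continuation to a globally real-analytic function $z_0 :\, \HS \to \CC$.

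Finally, in a neighborhood of $(\tau_0, z_0(\tau_0))$ the change of variables $w := z - z_0(\tau)$ turns $\phi$ into a real-analytic function of $(\tau, \ov\tau, w, \ov w)$ that is of type $fg^{-1}$ in $(w, \ov w)$, so expanding as a bivariate Laurent series in $w$ and $\ov w$ yields \eqref{eq:laurent_expansion_general} with finite orders $N, N'$. In the semi-meromorphic case one has $\partial_{\ov z}\phi = 0$ on the regular set, so $\psi = 0$ and $\tilde\psi = 0$; the piece $\Phi_2$ is then constant in $\ov z$ and absorbable into $\Phi_1$, hence $\phi$ is holomorphic in $z$ off its singular locus and the Laurent expansion loses its $\ov w$ factors, reducing to \eqref{eq:laurent_expansion_semimeromorph}. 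The main obstacle I expect is the global extension of $z_0$: one must argue that the local real-analytic branch picked out at $\tau_0$ cannot branch, collide with other sheets, or run off to infinity as $\tau$ varies over $\HS$, and this rests on the isolation hypothesis together with the real-analytic, simply-connected geometry of $\HS \times \CC$.
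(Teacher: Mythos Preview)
Your overall strategy---factor $\lapH_m$, integrate $\partial_{\ov z}\phi$, then parametrize the singular locus---shares its skeleton with the paper's argument, but there are two genuine gaps.

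The first is the parametrization $z_0(\tau)$ itself. You invoke a ``real-analytic Weierstrass preparation theorem and the implicit function theorem applied to $g$,'' but $z$ is a complex variable (two real coordinates), and real-analytic preparation with respect to one of them does not hand you a single real-analytic branch $z = z_0(\tau)$ of the zero set. The paper's key move, which you are missing, is to treat the \emph{semi-meromorphic} case first and to complexify the real variables $x = \Re\tau$, $y = \Im\tau$: then $\phi$ becomes a genuinely meromorphic function of three complex variables $(x,y,z)$, one can solve the Poincar\'e problem to write $\phi = f/g$ with $f,g$ coprime holomorphic, and the \emph{holomorphic} Weierstrass Preparation Theorem in the single complex variable $z$ gives a pseudo-polynomial whose roots furnish $z_0$. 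Only after this is established does the paper handle the general case, by observing that $\xiJHkm(\phi)$ is semi-meromorphic (hence has a Laurent expansion with some $z_0$), formally integrating in $\ov z$---which requires knowing the $(\ov z - \ov{z_0(\tau)})^{-1}$ coefficient of $\partial_{\ov z}\phi$ vanishes, a point you do not justify---and noting that the remainder $\phi - \phi^{\rm ra}$ is again semi-meromorphic. Your logical order (integrate first, parametrize second) leaves the existence of the $\ov z$-primitive and the assertion that ``both pieces inherit'' the $fg^{-1}$ singularity type unsupported.

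The second gap is the global extension of $z_0$ to all of $\HS$. This is neither needed nor available: despite the notation in the statement, the paper's proof is explicitly local (it shrinks $U \times V$ repeatedly and even moves $\tau_0$ to separate coalescing irreducible factors of the Weierstrass polynomial), and all downstream uses require only the local Laurent expansion. Simple connectedness of $\HS$ does not prevent branches of the singular locus from colliding or new components from appearing as $\tau$ varies, so the continuation argument you sketch cannot work in general---you correctly flag this as the main obstacle, and the resolution is simply not to claim it.
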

\begin{proof}
Suppose that $\phi$ has a singularity at $\big(\tau_0, z_0(\tau_0)\big)$, where $\tau_0 \in \HS$ and  $z_0(\tau_0) \in \CC$.  It suffices to show that there are open sets $\tau_0 \in U \subset \HS$ and $z_0(\tau_0) \in V \subset \CC$, and a real-analytic function $z_0 :\, U \rightarrow \CC$ such that for $\tau \in U$ the singularities of $\phi(\tau, \,\cdot\,)$ in a neighborhood of $z_0(\tau_0)$ lie exactly at $z_0(\tau)$ and have the same multiplicities for all $\tau \in U$.

We first assume that $\phi$ is semi-meromorphic.  Choose a neighborhood $U \times V$ of $\big(\tau_0, z_0(\tau_0)\big)$, small enough such that $\phi$ can be considered as a meromorphic function of three complex variables $x, y \in \CC_j := \RR + j \RR$ ($j^2 = -1$) and $z \in \CC$ restricted to $(\tau, z) = \big(x + i y, z \big) \in U \times V$ with $x, y \in \RR$.  We can write $\phi |_{U \times V}$ as a quotient of two holomorphic functions $f(x, y, z)$ and $g(x, y, z)$ in three variables, after possibly shrinking $U$ and $V$ (see the treatment of the Poincar\'e problem in~\cite[Proposition 3.1, Theorem 3.9]{Nishino}).  We may also assume that $f$ and $g$ are coprime, i.e., there is no open set~$W$ such that the sets of zeros of $f|_W$ and $g|_W$ are equal.

Since $\phi(\tau, \,\cdot\,)$ has isolated singularities, we can apply the Weierstrass Preparation Theorem (see~\cite[Theorem~2.1]{Nishino}) to $g$.  We find that the singularities of $\tau \mapsto \phi(\tau, z)$ are given by a product of powers of pairwise distinct irreducible pseudo polynomials (i.e., polynomials in $z$ with coefficients that are holomorphic functions of $x$ and $y$) $p_1(x,y;z),\ldots,p_l(x,y;z)$ for some $l$ after possibly shrinking $U$ and $V$ further.  Since these polynomials are coprime, one can move $\tau_0$ within $U$ (which may be needed if $l > 1$) and then shrink $U$ and $V$ even further such that finally $p_1(x,y;z)^r \phi(\tau, \,\cdot\,)$ has a holomorphic continuation on $U \times V$ for some $0 < r \in \ZZ$.  This proves the case when $\phi$ is semi-meromorphic.


If $\phi$ is not semi-meromorphic, then we will show that the locus of singularities of $\phi$ locally coincides with that of a semi-meromorphic function.  Write $\widetilde \phi$ for the image of $\phi$ under $\xiJHkm$ or $\xiJHskkm$.  Equation (\ref{eq:lapH_factor}) and the assumption that $\lapH_m(\phi)=0$ imply that $\widetilde \phi$ is semi-meromorphic, and $\widetilde \phi$ has singularities that can be described as above.  In particular, $\widetilde \phi$ has a local Laurent series expansion of the form
\begin{gather*}
  \sum_{n > -N} {\td c}_n(\tau) \big(z - z_0(\tau)\big)^n
\text{.}
\end{gather*}
For brevity we restrict to the case $\widetilde{\phi} = \xiJHkm (\phi)$; the case $\widetilde{\phi} = \xiJHskkm (\phi)$ is analogous.  Then $\partial_{\ov z} \, \phi$ has a local Laurent series expansion of the form
\begin{gather}
\label{eq:laurent_expansion_proof}
i\,\ov{\sqrt{\frac{-m}{y}}}\,\exp\Big(-\pi m \frac{(z - {\ov z})^2}{y}\Big) \sum_{n > -N}  \ov{{\td c}_n(\tau)} \big(\ov{z} - \ov{z_0(\tau)}\big)^n
\text{.}
\end{gather}
If $\tau\in\HS$ is fixed, then the assumptions on the singularities of $\phi$ guarantee that $\phi$ has a local Laurent series expansion in $z$ and ${\ov z}$. In particular, the coefficient of $\big(\ov{z} - \ov{z_0(\tau)}\big)^{-1}$ of the local Laurent series expansion of $\partial_{\ov z} \, \phi$ is zero, and one may formally integrate \eqref{eq:laurent_expansion_proof} with respect to $\ov{z}$.  This yields a real-analytic function $\phi^{\rm ra}$, which has a locally convergent Laurent series expansion as in \eqref{eq:laurent_expansion_general}, and which locally has the same locus of singularities as $\widetilde{\phi}$.  Moreover, $\phi - \phi^{\rm ra}$ is semi-meromorphic and by the above it has a local Laurent series expansion as in \eqref{eq:laurent_expansion_semimeromorph}. Thus $\phi$ has a local Laurent series expansion as in \eqref{eq:laurent_expansion_general}, which concludes the proof.
\end{proof}

Another crucial ingredient is the following proposition:
\begin{proposition}
\label{prop:singular_skew_maass_jacobi_forms}
There is no $\phi \in \JJHskkm$ that has a local Laurent series expansion with non-zero semi-meromorphic principal part
\begin{gather}
\label{eq:prop:singular_skew_maass_jacobi_forms}
\sum_{n = -N}^{-1} c_n(\tau) \big(z - z_0(\tau)\big)^n
\text{,}
\end{gather}
where $N > 0$ and $c_{-N} \ne 0$.
\end{proposition}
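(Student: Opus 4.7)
\textbf{Plan}: Argue by contradiction. Suppose $\phi\in\JJHskkm$ has a local Laurent expansion at $(\tau_0, z_0(\tau_0))$ with semi-meromorphic principal part $\phi_{\mathrm{sm}}=\sum_{n=-N}^{-1}c_n(\tau)(z-z_0(\tau))^n$, $c_{-N}\ne 0$, and write $\phi=\phi_{\mathrm{sm}}+\phi_{\mathrm{ra}}$ near the pole with $\phi_{\mathrm{ra}}$ real-analytic in a neighborhood of $(\tau_0,z_0(\tau_0))$. Since $\cC^{\sk}_{k,m}\phi=0$ and $\cC^{\sk}_{k,m}\phi_{\mathrm{ra}}$ is real-analytic, the goal is to force $c_{-N}=0$ by analyzing the singular Laurent coefficients of $\cC^{\sk}_{k,m}\phi_{\mathrm{sm}}$ in $(z-z_0(\tau))$, which must all vanish.

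The first step is to simplify $\cC^{\sk}_{k,m}\phi_{\mathrm{sm}}$. Since $\phi_{\mathrm{sm}}$ is annihilated by $Y^{\sk;k,m}_-=-i\partial_{\bar z}$, formula~\eqref{eq:semimeromorphic_casimir} yields $\cC^{\sk}_{k,m}\phi_{\mathrm{sm}}=2\,\xiJ_{3-k,m}\,\xi^{\sk}_{k,m}\phi_{\mathrm{sm}}$. Combined with $\xi^{\sk}_{k,m}=\tfrac{y^{k-1/2}}{4\pi m}L_m$ and the fact that every $Y_-$-piece in $\xiJ_{3-k,m}$ annihilates the semi-meromorphic factor $y^{k-1/2}L_m\phi_{\mathrm{sm}}$, this reduces the Casimir to a clean expression that I expect to collapse, after a short calculation, to a constant times $y^{5/2-k}\partial_{\bar\tau}\!\bigl[y^{k-1/2}L_m\phi_{\mathrm{sm}}\bigr]$. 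Then I would read off the singular Laurent coefficients of this expression order by order in $(z-z_0)$. I expect the most singular term to come from $\partial_{\bar\tau}$ hitting $z_0$ inside $(z-z_0(\tau))^{-(N+2)}$ and to force $\partial_{\bar\tau}z_0=0$, so $z_0$ is holomorphic; the next order to yield the $\bar\tau$-ODE $\partial_{\bar\tau}c_{-N}=\frac{k-1/2}{2iy}\,c_{-N}$, whose only solutions have the form $c_{-N}(\tau,\bar\tau)=y^{1/2-k}F_{-N}(\tau)$ with $F_{-N}$ holomorphic; and by iteration, $c_n=y^{1/2-k}F_n(\tau)$ for every $n$. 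Writing $\phi_{\mathrm{sm}}=y^{1/2-k}F(\tau,z)$ with $F=\sum F_n(\tau)(z-z_0(\tau))^n$ meromorphic in $(\tau,z)$, the identity $L_m(y^{1/2-k}F)=4\pi m(\tfrac{1}{2}-k)y^{-1/2-k}F+y^{1/2-k}L_mF$ together with the holomorphy of $F$ should reduce the singular part of $\cC^{\sk}_{k,m}\phi_{\mathrm{sm}}$ to a nonzero multiple of $(2k-1)F/y^2$. Real-analyticity at the pole (where $F$ is singular) then forces $(2k-1)F=0$, so $F=0$ and hence $c_{-N}=0$ whenever $k\ne\tfrac{1}{2}$. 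The degenerate case $k=\tfrac{1}{2}$ is handled via the coincidence of the skew and non-skew slash actions at this weight, which identifies $\JJHsk_{1/2,m}$ with $\JJH_{1/2,m}$ and reduces the claim to the corresponding non-skew statement.

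The main obstacle is the bookkeeping in the order-by-order Laurent analysis: the intermediate singular coefficients at orders $(z-z_0)^{-(N+1)}$ and $(z-z_0)^{-N}$ receive contributions not only from $c_{-N}$ but also from $c_{-N+1}$ and $c_{-N+2}$, and it is essential that these extra contributions cancel exactly once the $\bar\tau$-ODE $\partial_{\bar\tau}c_n=\frac{k-1/2}{2iy}c_n$ is propagated to every coefficient, leaving a single obstruction in terms of $c_{-N}$ (equivalently $F$) alone. The preliminary reduction via~\eqref{eq:semimeromorphic_casimir} is what makes this cancellation transparent; attacking the full definition of $\cC^{\sk}_{k,m}$ directly would produce cross-terms that obscure it.
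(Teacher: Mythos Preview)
Your reduction of $\cC^{\sk}_{k,m}\phi_{\mathrm{sm}}$ to a constant multiple of $y^{5/2-k}\partial_{\bar\tau}\bigl[y^{k-1/2}L_m\phi_{\mathrm{sm}}\bigr]$ is correct, and so are the first two consequences ($z_0$ holomorphic, $c_{-N}=y^{1/2-k}F_{-N}$). The fatal gap is the step ``by iteration, $c_n=y^{1/2-k}F_n(\tau)$ for every $n$.'' Look at the coefficient of $(z-z_0)^{-N}$ in $\partial_{\bar\tau}\xi^{\sk}_{k,m}(\cP)=0$: it reads
\[
\partial_{\bar\tau}\Bigl[y^{k-1/2}\Bigl(\partial_\tau c_{-N}-(1-N)(\partial_\tau z_0)c_{-N+1}-\tfrac{(-N+2)(-N+1)}{8\pi im}c_{-N+2}\Bigr)\Bigr]=0.
\]
With $c_{-N}=y^{1/2-k}F_{-N}$ one has $y^{k-1/2}\partial_\tau c_{-N}=\tfrac{1/2-k}{2i}y^{-1}F_{-N}+\partial_\tau F_{-N}$, whose $\partial_{\bar\tau}$ is $\tfrac{k-1/2}{4}y^{-2}F_{-N}$. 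Integrating, you obtain
\[
c_{-N+2}=\alpha\,y^{-1/2-k}F_{-N}+y^{1/2-k}G_{-N+2},\qquad \alpha=\tfrac{-4\pi m(k-1/2)}{(N-2)(N-1)},
\]
which is \emph{not} of the form $y^{1/2-k}\times(\text{holomorphic})$ unless $k=\tfrac12$ or $F_{-N}=0$. So the ansatz $\phi_{\mathrm{sm}}=y^{1/2-k}F$ with $F$ meromorphic is unjustified, and the eigenvalue calculation $\cC^{\sk}_{k,m}\phi_{\mathrm{sm}}=(2k-1)\phi_{\mathrm{sm}}$ that follows does not apply.

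This is exactly why the paper only proves that each $c_n$ is a \emph{finite} polynomial in $y$ with holomorphic coefficients, and then pivots to a completely different idea you are missing: it analyzes the \emph{top} coefficient $c_{n_0}$ (not $c_{-N}$), pins it down to $c\,y^{3/2-k}$, and then uses the modular $S$-transformation $A_S\in\Gamma^{\rm J}$ to compare the leading Laurent coefficients at $(\tau_0,z_0)$ and at the transformed pole. The resulting identity $\tau^{k-2-n_0}\bar\tau^{-1}=\text{const}$ is the contradiction. The differential equations alone do not force $c_{-N}=0$; modularity is essential.

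Finally, your treatment of $k=\tfrac12$ is a dead end: there is no ``corresponding non-skew statement'' to reduce to. Non-skew \Hharmonic\ Maass--Jacobi forms \emph{do} carry semi-meromorphic principal parts---that is the whole point of the $\widehat\mu_{m,l}\in\JH_{1/2,-m}$ constructed later in the paper.
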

\begin{proof}
Let $\phi \in \JJHskkm$ with singularities, and assume that $\phi$ has a local Laurent series expansion as in Proposition~\ref{prop:laurent_expansion} with non-zero semi-meromorphic principal part
\begin{gather*}
  \cP(\tau, z)
:=
  \sum_{n = -N}^{-1} c_n(\tau) \big(z - z_0(\tau)\big)^n
\text{,}
\end{gather*}
where $N > 0$ and $c_{-N} \ne 0$.  By assumption, $\cP$ is semi-meromorphic and $\cC^\sk_{k,m}(\cP) = 0$.  The factorization~\eqref{eq:semimeromorphic_casimir} of $\cC^\sk_{k,m}$ for semi-meromorphic forms implies that $\xi^{\sk}_{k, m}(\cP)$ is meromorphic. In particular, $\partial_{\ov \tau}\, \xiJsk_{k,m}(\cP)=0$. Explicitly, we have
\begin{gather}
\label{eq:skew_singularities_apply_xiJsk}
  \xi^\sk_{k,m}(\cP)
=
  2i\,y^{k - \frac{1}{2}}
  \sum_{n = -N}^{-1} \bigg(
              \partial_\tau \Big( c_n(\tau) \big(z - z_0(\tau)\big)^n \Big)
            - \frac{n (n - 1)}{8 \pi im} c_n(\tau) \big(z - z_0(\tau)\big)^{n - 2} \bigg)
\text{.}
\end{gather}
%
%
%
We inspect the coefficients of $\big(z - z_0(\tau)\big)^{-N - 3}$ and $\big(z - z_0(\tau)\big)^{-N - 2}$ in the Laurent series expansion of $\partial_{\ov \tau} \, \xiJsk_{k,m}(\cP)$) and see that $z_0$ and $y^{k - \frac{1}{2}} c_{-N} (\tau)$ are holomorphic.
If $n < 0$, then an induction argument shows that 
\begin{gather*}
c_n(\tau) = \sum_{l \in \ZZ + \frac{1}{2}} y^l c_{n, l}(\tau),
\end{gather*}
where $c_{n, l}(\tau)$ is holomorphic and the sum is finite.  If $n = -N$, then this is true by the above.  Assume that the claim is true for all $n < n_0 < 0$.  Apply $\partial_{\ov \tau}$ to~\eqref{eq:skew_singularities_apply_xiJsk} to obtain
\begin{gather*}
  \partial_{\ov \tau}
 \Bigg(  2 i
  y^{k - \frac{1}{2}}
  \Big(  \big(\partial_{\tau} c_{n_0 - 2}\big) (\tau)
      - (n_0 - 2) \big(\partial_{\tau} z_0\big) (\tau) c_{n_0 - 1}(\tau)
       - \frac{n_0(n_0 - 1)}{8 \pi i m} c_{n_0}(\tau)
  \Big)
  \Bigg)
=
  0
\text{,}
\end{gather*}
which proves that $c_{n_0}(\tau)$ is of the required form.

Let $n_0 < 0$ be maximal such that $c_{n_0} \ne 0$.  Expand the coefficient of \mbox{$\big(z - z_0(\tau)\big)^{n_0}$} in the Laurent series expansion of $\partial_{\ov \tau}\xi^\sk_{k,m}(\cP)$ to find that $y^{k - \frac{1}{2}} \sum_{l \in \ZZ + \frac{1}{2}} \big( \tfrac{-il}{2} y^{l - 1} c_{n_0, l}(\tau) + y^l \partial_\tau c_{n_0, l}(\tau)\big)$ is holomorphic.  It is easy to see that this is only possible if $c_{n_0}(\tau) = c \, y^{\frac{3}{2}-k}$ for some $0\not=c\in\C$.

Note that $n_0 \ne -N$, since $y^{k - \frac{1}{2}} c_{n_0} (\tau)$ is not holomorphic.   Consider the coefficient of \mbox{$\big(z - z_0(\tau)\big)^{n_0-1}$} in the Laurent series expansion of $\partial_{\ov \tau}\xi^\sk_{k,m}(\cP)$ to discover that $-n_0 cy\partial_\tau z_0(\tau) + y^{k - \frac{1}{2}}\sum_{l \in \ZZ + \frac{1}{2}} \big( \tfrac{-il}{2} y^{l - 1} c_{n_0-1, l}(\tau) + y^l \partial_\tau c_{n_0-1, l}(\tau)\big)$ is holomorphic, which is only possible if $\partial_\tau z_0(\tau)$ is a polynomial, since the sum over $l$ is finite.


Let $A_S:=\left[\big(\left(\begin{smallmatrix}0 & -1\\1 & 0\end{smallmatrix}\right), \sqrt{\tau} \,\big), (0, 0)\right]\in\Gamma^{\rm J}$.  If $\big(\tau_0,z_0(\tau_0)\big)$ is a singularity of $\phi=\phi\,\big|_{k,m}^{sk} A_S$, then so is $\big(\widetilde{\tau_0},\widetilde{z_0}(\widetilde{\tau_0})\big)$, where $\big(\widetilde{\tau_0},\widetilde{z_0}(\tau)\big):=\big(\frac{-1}{\tau_0},\tau z_0\big(\frac{-1}{\tau}\big)\big)$. Moreover, $\phi$ has a local Laurent series expansion with non-zero semi-meromorphic principal part
\begin{gather*}
  \widetilde{\cP}(\tau, z)
:=
  \sum_{n = -N}^{-1} \widetilde{c}_n(\tau) \big(z - \widetilde{z}_0(\tau)\big)^n
\text{,}
\end{gather*}
where $\widetilde{c}_n = 0$ for $n > n_0$ and $\widetilde{c}_{n_0} \ne 0$. By the above reasoning, $\widetilde{c}_{n_0}(\tau) = \widetilde{c} \, y^{\frac{3}{2} - k}$ for some $0 \ne \widetilde{c} \in \CC$, and $\widetilde{z_0}$ is a polynomial in $\tau$. Observe that $\widetilde{z_0}$ has an analytic continuation to $\HS$, and $\phi$ has singularities along $\widetilde{z_0}(\tau)$ for all $\tau$.
Compare the $n_0$-th Laurent series coefficients of $\cP$ and $\widetilde{\cP}$ at $(\tau_0, z_0) = A_S^{-1} (\widetilde{\tau}_0, \widetilde{z}_0)$:
\begin{gather*}
  \big(\widetilde{c}_{n_0} \big|_{k, m}^\sk A_S^{-1}\big)(\tau) \, \Big(\frac{-z}{\tau} - \widetilde{z}_0 \Big(\frac{-1}{\tau}\Big) \Big)^{n_0}
=
  \widetilde{c}_{n_0}\Big(\frac{-1}{\tau}\Big) |\tau|^{-1} \ov{\tau}^{1 - k} (- \tau)^{-n_0}\, \big(z - z_0(\tau) \big)^{n_0}
\text{.}
\end{gather*}
The fact that $\phi=\phi\,\big|_{k,m}^{sk} A_S$ implies that
\begin{gather*}
  \widetilde{c} \; {\rm Im}\big(\frac{-1}{\tau}\big)^{\frac{3}{2} - k} \, |\tau|^{-1} \ov{\tau}^{1 - k} (-\tau)^{-n_0} 
=
  (-1)^{n_0} \widetilde{c} \; y^{\frac{3}{2} - k} \, \tau^{k-2-n_0} \ov{\tau}^{- 1}
=
  c \, y^{\frac{3}{2} - k}
\text{,}
\end{gather*}
which is impossible, since $c, \widetilde{c} \ne 0$.  This contradiction completes the proof.
\end{proof}

\begin{corollary}
\label{cor:semimeromorphic_skew_maass_jacobi_forms}
We have $\JJskkm = \holJJskkm$, and $\holJJskkm = \{0\}$ if $m < 0$.
\end{corollary}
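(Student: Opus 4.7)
The plan is to establish the two claims in Corollary~\ref{cor:semimeromorphic_skew_maass_jacobi_forms} separately: the equality $\JJskkm=\holJJskkm$ using Proposition~\ref{prop:singular_skew_maass_jacobi_forms} as the main engine, and the triviality $\holJJskkm=\{0\}$ for $m<0$ from the Fourier-series classification of Propositions~\ref{prop:hharmonic_fourierexpansion}--\ref{prop:skew-hharmonic_fourierexpansion} combined with the growth condition in Definition~\ref{def:maassjacobiforms}.

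For $\JJskkm\subseteq \holJJskkm$, suppose $\phi\in\JJskkm$ has a singularity. By Proposition~\ref{prop:laurent_expansion}, $\phi$ admits a local Laurent expansion of the form~\eqref{eq:laurent_expansion_general} at some singular point. Set $\tilde\phi:=\xiJHskkm(\phi)$. The factorization $\lapH_m=\xiJH_{k,-m}\circ\xiJHskkm$ from~\eqref{eq:lapH_factor} together with $\lapH_m(\phi)=0$ gives $\xiJH_{k,-m}(\tilde\phi)=0$, so $\tilde\phi$ is semi-meromorphic. The commutation relation $\xiJ_{k,-m}\circ\xiJHskkm=\xiJH_{3-k,m}\circ\xiJskkm$, which is verifiable directly as a formal identity of differential operators (so it applies even without assuming Fourier expansions, as in the right diagram of Corollary~\ref{cor:commuting_xi_operators}), combined with $\xiJskkm(\phi)=0$, yields $\xiJ_{k,-m}(\tilde\phi)=0$. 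A short computation using $\partial_{\bar z}\tilde\phi=0$ reduces $\xiJ_{k,-m}(\tilde\phi)$ to a multiple of $\partial_{\bar\tau}\tilde\phi$, so $\tilde\phi$ is meromorphic in both variables and hence lies in $\J_{k,-m}$.

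If $\tilde\phi=0$, then $\partial_{\bar z}\phi\equiv 0$, so $\phi$ itself is semi-meromorphic with Laurent expansion of the form~\eqref{eq:laurent_expansion_semimeromorph}; its principal part is non-zero because $\phi$ is singular, which directly contradicts Proposition~\ref{prop:singular_skew_maass_jacobi_forms}. In the remaining case $\tilde\phi\ne0$, I would reduce to the previous case via the local decomposition $\phi=\phi^{\rm sm}+\phi^{\rm ra}$ implicit in the proof of Proposition~\ref{prop:laurent_expansion}, where $\phi^{\rm sm}$ is semi-meromorphic and $\phi^{\rm ra}$ is a $\bar z$-primitive of $\tilde\phi$ with the same singular locus as $\tilde\phi$. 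Proposition~\ref{prop:singular_skew_maass_jacobi_forms} applied to $\phi$ forces the principal part of $\phi^{\rm sm}$ to vanish, so every singularity of $\phi$ must be carried by $\phi^{\rm ra}$. The main obstacle is then to derive a contradiction from this, which I would attempt by feeding the Laurent expansion~\eqref{eq:laurent_expansion_general} with vanishing coefficients $c_{n,0}(\tau)$ for $n<0$ into the heat equation $L_m(\phi)=0$ coming from $\xiJskkm(\phi)=0$, and running an inductive argument on $(n,n')$ analogous to the one in the proof of Proposition~\ref{prop:singular_skew_maass_jacobi_forms}, propagating the vanishing to all $c_{n,n'}$ with $n<0$ or $n'<0$.

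For the triviality $\holJJskkm=\{0\}$ when $m<0$: any $\phi\in\holJJskkm$ is smooth, so $\xiJHskkm(\phi)$ is free of singularities and Proposition~\ref{prop:skew-hharmonic_fourierexpansion} applies, placing the Fourier coefficients $c^\sk(n,r;y,v)$ in the $4$-dimensional span of $c^\sk_1,\dots,c^\sk_4$. The constraint $\xiJskkm(\phi)=0$ together with Proposition~\ref{prop:images_of_xi} cuts this down to the span of $c^\sk_1$ and $c^\sk_3$, both of which carry the factor $\exp(\pi D y/m)$ with $D=4mn-r^2$. For $m<0$ and $z=\alpha\tau+\beta$, the exponential contribution of a single Fourier term is $\exp\bigl((2\pi n+\pi r^2/|m|-2\pi r\alpha)y\bigr)$; the finite-growth condition~(4) of Definition~\ref{def:maassjacobiforms} requires the exponents over non-zero Fourier coefficients to be bounded above for each $\alpha\in\QQ$, and playing $\alpha$ off against the Jacobi quasi-periodicity of coefficients in $r$ modulo~$2m$ forces every Fourier coefficient to vanish, giving $\phi=0$.
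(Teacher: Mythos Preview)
You have misread the definition of $\JJskkm$. By the paragraph following Definition~\ref{def:maassjacobiforms} (and consistently with Table~\ref{tab:maass_jacobi_spaces}), $\JJskkm$ is the subspace of $\JJHskkm$ annihilated by $\xiJHskkm$, not by $\xiJskkm$; the kernel of $\xiJskkm$ is $\JHskkm$. Since $\xiJHskkm$ is a nonvanishing factor times $\overline{\partial_{\bar z}\phi}$, every $\phi\in\JJskkm$ is semi-meromorphic by definition. Hence Proposition~\ref{prop:laurent_expansion} gives a Laurent expansion of the simple form~\eqref{eq:laurent_expansion_semimeromorph}, and Proposition~\ref{prop:singular_skew_maass_jacobi_forms} immediately rules out any nontrivial principal part. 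Your entire ``$\tilde\phi\ne 0$'' case therefore does not exist, and the sketchy inductive argument you propose for it is neither needed nor fleshed out. Your ``$\tilde\phi=0$'' case is the whole proof of the first claim, and it matches the paper's argument.

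The same definitional swap affects your second claim. You invoke $\xiJskkm(\phi)=0$ to restrict the Fourier coefficients to the span of $c_1^{\sk},c_3^{\sk}$, but for $\phi\in\holJJskkm$ the actual constraint is $\xiJHskkm(\phi)=0$, which by Proposition~\ref{prop:images_of_xi} selects $c_1^{\sk},c_2^{\sk}$; these do not share a common factor $\exp(\pi Dy/m)$, so your growth argument breaks down. The paper bypasses Fourier expansions entirely: since $\phi$ is semi-holomorphic, for each fixed $\tau$ the function $z\mapsto\phi(\tau,z)$ is entire and satisfies the elliptic transformation law of index $m$, and the classical residue/zero-counting argument of \cite[Theorem~1.2]{EZ} forces $\phi(\tau,\cdot)\equiv 0$ when $m<0$.
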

\begin{proof}
Let $\phi \in \JJskkm$, and assume that $\phi(\tau_0, \,\cdot\,)$ has singularities for some $\tau_0\in\HS$.  Consider the Laurent series expansion of $\phi$ around a singular point $\big(\tau_0, z_0(\tau_0)\big)$ (see Proposition~\ref{prop:laurent_expansion}):
\begin{gather*}
  \sum_{n \ge -N} c_n(\tau) \big(z - z_0(\tau)\big)^n
\end{gather*}
for some $N > 0$ and $c_{-N} \ne 0$.  The functions $c_n :\, \HS \rightarrow \CC$ are real-analytic and $z_0(\tau)$ parametrizes the singularities in a neighborhood of $\big(\tau_0, z_0(\tau_0)\big)$.  However, Proposition~\ref{prop:singular_skew_maass_jacobi_forms} implies the contradiction $c_n = 0$ for $n < 0$.  Hence $\phi$ has no singularities and $\JJskkm = \holJJskkm$.

The second part follows from the residue theorem as in the proof of \cite[Theorem~1.2]{EZ}.
\end{proof}

We are now in a position to prove Theorem~\ref{thm:structure_maass_jacobi_forms}~(\ref{thm:it:semimeromorphic_maass_jacobi_forms}).

\begin{proof}[Proof of Theorem~\ref{thm:structure_maass_jacobi_forms}~(\ref{thm:it:semimeromorphic_maass_jacobi_forms})]
If $\phi \in \JJkm$, then Corollary~\ref{cor:commuting_xi_operators} and Corollary~\ref{cor:semimeromorphic_skew_maass_jacobi_forms} imply that $\xiJkm (\phi) \in \holJsk_{3 - k, m}$. Moreover, if $\xiJkm (\phi) \ne 0$, then $m > 0$.  Note that $\xiJkm :\, \holJJkm \rightarrow \holJsk_{3 -k, m}$ is surjective.  For the subspace of cusp forms of $\holJsk_{3 - k, m}$, this observation is the remark after Theorem~2 of \cite{B-R-Maass-Jacobi}.  It is easy to see that the argument with Jacobi\nbd Poincar\'e series given there holds for all weak skew-holomorphic Jacobi forms of weight $3 - k$ and index $m$.  In particular, there exists $\psi \in \holJJkm$ such that $\xiJkm (\psi) = \xiJkm (\phi)$.  We find that $\phi - \psi$ is meromorphic, and $\phi = \psi + ( \phi - \psi)$ is the desired decomposition.
\end{proof}

We end this section with a corollary, whose proof does not rely on  Proposition~\ref{prop:hharmonic_fourierexpansion}, Proposition~\ref{prop:skew-hharmonic_fourierexpansion}, and Proposition~\ref{prop:images_of_xi}.

\begin{corollary}
\label{cor:singularities_of_maass_jacobi_forms}
Let $\phi \in \JJHkm$.  Then $\partial_{\ov z} (\phi)$ has no singularities.
\end{corollary}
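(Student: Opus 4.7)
My plan is to exhibit $\Psi := \xiJHkm(\phi)$ as an element of $\JJsk_{k,-m}$; Corollary~\ref{cor:semimeromorphic_skew_maass_jacobi_forms} will then force $\Psi$ to have no singularities. Since the defining prefactor $\sqrt{-m y}^{-1}\exp(-4\pi m v^2/y)$ of $\xiJHkm$ is nowhere zero on $\HH\times\CC$, and since $\overline{Y_-^{k,m}\phi} = iy\,\overline{\partial_{\overline z}\phi}$, the singular loci of $\Psi$ and of $\partial_{\overline z}\phi$ coincide, so this will complete the proof.

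First I will show that $\Psi$ is semi-meromorphic. By the factorization $\lapH_m = \xiJHsk_{k,-m}\circ\xiJHkm$ from~\eqref{eq:lapH_factor}, the hypothesis $\lapH_m(\phi)=0$ yields $\xiJHsk_{k,-m}(\Psi)=0$, and since $Y_-^{\sk;\, k,-m} = -i\partial_{\overline z}$ enters $\xiJHskkm$ with nowhere-vanishing prefactor, this forces $\partial_{\overline z}\Psi = 0$. Hence $\Psi(\tau,\,\cdot\,)$ is meromorphic with isolated singularities for each $\tau\in\HH$.

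Next I will verify that $\Psi$ satisfies the remaining conditions defining $\JJHsk_{k,-m}$. The transformation law $\Psi\big|_{k,-m}^{\sk}A = \Psi$ for $A\in\JacF$ follows from the covariance relation~\eqref{eq:xi_equiv}; the Heisenberg-Laplace equation $\lapH_{-m}(\Psi)=0$ follows from the second factorization $\lapH_{-m} = \xiJH_{k,m}\circ\xiJHsk_{k,-m}$ (which is~\eqref{eq:lapH_factor} applied at index $-m$) combined with $\xiJHsk_{k,-m}(\Psi)=0$; and the growth condition is inherited from $\phi$, because $\xiJHkm$ is a first-order differential operator with tame coefficients. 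Together with the already-established $\xiJHsk_{k,-m}(\Psi) = 0$, only the Casimir condition $\cC^\sk_{k,-m}(\Psi)=0$ will then remain to place $\Psi$ in $\JJsk_{k,-m}$.

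Verifying the Casimir condition is the main obstacle. Since $\Psi$ is semi-meromorphic, \eqref{eq:semimeromorphic_casimir} reduces it to $\xiJ_{3-k,-m}\circ\xiJsk_{k,-m}(\Psi)=0$. Morally this rests on an operator identity of the form $\cC^\sk_{k,-m}\circ\xiJHkm = \xiJHkm\circ\cC_{k,m}$ (up to a scalar), whose abstract explanation is the centrality of the Casimir in the enveloping algebra of the metaplectic Jacobi Lie algebra together with $[\cC,Y_-]=0$, so that $Y_-^{k,m}\phi \in \ker\cC_{k-1,m}$ already; tracking the complex conjugation and the prefactors $\sqrt{-m y}^{-1}\exp(-4\pi m v^2/y)$ that convert $Y_-^{k,m}$ to $\xiJHkm$ turns this into $\cC^\sk_{k,-m}(\Psi) = 0$. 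A concrete backup is to solve the equation $\lapH_m(\phi)=(y\partial_z+4\pi i m v)\partial_{\overline z}\phi=0$ explicitly, which gives $\partial_{\overline z}\phi = \exp(4\pi m v^2/y)\,h(\overline z,\tau,\overline\tau)$ and hence $\Psi = c\,y^{1/2}\,\overline h(z,\overline\tau,\tau)$ for some function $h$, and then to verify $\cC^\sk_{k,-m}(\Psi)=0$ by a direct (if tedious) computation using $\cC_{k,m}(\phi)=0$. Once $\Psi\in\JJsk_{k,-m}$ is in place, Corollary~\ref{cor:semimeromorphic_skew_maass_jacobi_forms} concludes.
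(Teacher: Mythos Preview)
Your approach is the same as the paper's: show that $\Psi = \xiJHkm(\phi)$ lies in $\JJsk_{k,-m}$ and invoke Corollary~\ref{cor:semimeromorphic_skew_maass_jacobi_forms}, then translate back to $\partial_{\overline z}\phi$ via the nowhere-vanishing prefactor. The paper's proof is extremely terse---it simply asserts $\xiJHkm(\phi)\in\holJJsk_{k,-m}$ citing~\eqref{eq:xi_equiv}, \eqref{eq:lapH_factor}, and Corollary~\ref{cor:semimeromorphic_skew_maass_jacobi_forms}---and does not spell out the individual conditions.

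You are right to flag the skew Casimir condition $\cC^\sk_{k,-m}(\Psi)=0$ as the step that actually needs justification; the paper suppresses it entirely. Your first sketch (centrality of the Casimir gives $[\cC,Y_-]=0$, so $Y_-^{k,m}\phi\in\ker\cC_{k-1,m}$, and then one tracks how complex conjugation and the weighting $\sqrt{-my}^{\,-1}\exp(-4\pi m v^2/y)$ convert $\cC_{k-1,m}$ into $\cC^\sk_{k,-m}$, which is essentially the content of the defining relation $\cC^\sk_{k,m}=8\pi i m\,y^{1/2-k}\cC_{1-k,m}\,y^{k-1/2}+2k-1$) is the correct mechanism.

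One minor correction: your claim that the growth condition is ``inherited from $\phi$ because $\xiJHkm$ is a first-order operator with tame coefficients'' is not valid as stated. The growth condition in Definition~\ref{def:maassjacobiforms} bounds $\phi(\tau,\alpha\tau+\beta)$, not $\partial_{\overline z}\phi$ evaluated along that line, and a bound $O(e^{ay})$ on a function need not transfer to its derivative. Fortunately this does not matter: as the remark after Definition~\ref{def:maassjacobiforms} states, the growth condition is used nowhere in Proposition~\ref{prop:singular_skew_maass_jacobi_forms} or Corollary~\ref{cor:semimeromorphic_skew_maass_jacobi_forms}, so you may simply drop that verification.
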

\begin{proof}
If $\partial_{\ov z} (\phi)$ had singularities, then so would $\xiJHkm (\phi)$. However, (\ref{eq:xi_equiv}), (\ref{eq:lapH_factor}), and Corollary~\ref{cor:semimeromorphic_skew_maass_jacobi_forms} yield that $\xiJHkm (\phi)\in\holJJsk_{k,-m}$ has no singularities.
\end{proof}

\section{Theta decompositions}
\label{sec:thetadecompositions}

It is well-known that holomorphic and skew-holomorphic Jacobi forms have a theta decomposition (see \cite{EZ, Sko-Invent90}).  This follows directly from the invariance under the Heisenberg part of $\JacF$, and hence semi-holomorphic forms in $\holJJkm$ and $\holJJskkm$ also have such a theta decomposition.  Specifically, if $0\not = \phi \in \holJJkm$ is semi-holomorphic, then $m > 0$ by Theorem~\ref{thm:structure_maass_jacobi_forms}~(\ref{thm:it:semimeromorphic_maass_jacobi_forms}), and
\begin{gather*}
  \phi(\tau, z)
=
  \sum_{l \pmod{2 m}} h_l(\tau) \, \theta_{m,l}(\tau, z)
\text{,}
\end{gather*}
where $h_l$ are harmonic weak Maass forms and
\begin{gather}
\label{eq:theta_series_def}
  \theta_{m,l}(\tau, z)
:=
  \sum_{r \equiv l \pmod{2 m}} q^{\frac{r^2}{4 m}} \zeta^r
\text{,}
\end{gather}
where we write again $q:=e^{2 \pi i \tau}$ and $\zeta:=e^{2\pi iz}$. For semi-holomorphic skew-Maass-Jacobi forms we have an analogous decomposition $\sum_l \ov{h_l}\, \theta_{m,l}$.

We now review a more precise viewpoint of the theta decomposition.  Recall that the metaplectic cover $\Mp{2}(\ZZ)$ of $\SL{2}(\ZZ)$ is generated by
$
  T
:=
  \big( \left(\begin{smallmatrix}1 & 1 \\ 0 & 1\end{smallmatrix}\right),\, 1 \big)
$ and
$
  S
:=
  \big( \left(\begin{smallmatrix}0 & -1 \\ 1 & 0\end{smallmatrix}\right),\, \sqrt{\tau} \big)
$, where the root is given by the principal branch.  The Weil representation $\rho_m$ of $\Mp{2}(\ZZ)$ associated to the Jacobi index $m > 0$ is defined as follows (for example, see~\cite{Sko-Weil} for details).  It is a representation of $\Mp{2}(\ZZ)$ on the group algebra $\CC \big[ \ZZ / 2 m \ZZ \big]$, which has canonical basis elements $\frake_l$ for $l \in \ZZ / 2 m \ZZ$:
\begin{align}
  \rho_m (T) \, \frake_l
&:=
  e_{4 m}(l^2) \, \frake_l
\text{,}
\\[6pt]
  \rho_m (S) \, \frake_l
&:=
  \frac{1}{\sqrt{2 i m}}
  \sum_{l' \pmod{2 m}} \hspace{-1em}
  e_{2m}(- l l') \frake_{l'}
\text{,}
\end{align}
where here and throughout this section, $e_m(w):=e^{\frac{2 \pi i w}{m}}$.  We denote the dual Weil representation by ${\check \rho}_{m}$.

The Weil representation factors over the congruence subgroup
\begin{gather*}
  \Mp{2}(\ZZ)[4m]
:=
  \Big\{ \left(\begin{matrix} a & b \\ c & d \end{matrix}\right) \,:\, a \equiv d \equiv 1 \pmod{4m} \text{ and } b \equiv c \equiv 0 \pmod{4m} \Big\}
\text{.}
\end{gather*}

Given $h \,:\, \HS \rightarrow \CC[\ZZ / 2 m \ZZ]$, we define a vector-valued slash action of $\Mp{2}(\ZZ)$:
\begin{gather*}
  h \big|_{k, \rho_m} \, g
:=
  \rho_m(g) \, h \big|_k \, g 
\text{}
\end{gather*}
for all $g\in\Mp{2}(\ZZ)$. We say that a map $h \,:\, \HS \rightarrow \CC[\ZZ / 2 m \ZZ]$ is a vector-valued modular form if every component is a modular form (for some congruence subgroup) and if $h$ is invariant under the $\big|_{k, \rho_m}$-action of $\Mp{2}(\ZZ)$.  This definition extends to vector-valued harmonic weak Maass forms of weight $k$ and type $\rho_m$.  We write $\bbM_{k, \rho_m}$ for the space of such forms, and ${\rm M}^!_{k, \rho_m} \subset \bbM_{k, \rho_m}$ for the subspace of weakly holomorphic vector-valued modular forms of weight $k$ and type $\rho_m$. Vector-valued Jacobi forms can be defined analogously.  The transformation laws of $\theta_{m,l}$ (see $\S 5$ of \cite{EZ}) yield that $(\theta_{m,l})_l$ is a vector-valued Jacobi form of weight $\frac{1}{2}$, index $m$, and type ${\check \rho}_m$.


The theta decomposition for Jacobi forms can be stated more precisely as an isomorphism between vector-valued modular forms and Jacobi forms (for example, see~\cite{Sko-Weil}).  It is easy to see that such isomorphisms hold also for semi-holomorphic forms in $\holJJkm$ and $\holJJskkm$.  Specifically,
\begin{alignat}{2}
\label{eq:theta_expansion_map}
  \bbM_{k - \frac{1}{2},{\rho}_m} &\longrightarrow \holJJkm, \;
&\quad
  (h_l)_l &\longmapsto \sum_l h_l \, \theta_{m,l}
\qquad\text{and}
\\[6pt]
\label{eq:theta_expansion_map_skew}
  \bbM_{k - \frac{1}{2},\check \rho_m} &\longrightarrow \holJJskkm, \;
&\quad
  (h_l)_l &\longmapsto \sum_l \ov{h_l} \, \theta_{m,l}
\end{alignat}
are bijective for $m > 0$.

We next recall a set of $\mu$-functions from Zwegers \cite{Zwe-thesis, Zwe-multivar-Appell} that will serve as a substitute for the theta series in (\ref{eq:theta_series_def}).  
Let $m > 0$.  For $n \in \ZZ^{2m}$, write $|n| := \sum_{i = 1}^{2 m} n_i$ and $\|n\| := \sum_{i = 1}^{2 m} n_i^2$.  Define
\begin{gather}
\mu_m(z_1, z_2; \tau):=
  \frac{e^{\pi i z_1}}{\theta(z_2; \tau)^{2 m}}
  \sum_{n \in \ZZ^{2 m}} \frac{(-1)^{|n|} q^{\frac{1}{2}\|n\|^2 + \frac{1}{2}|n|} e^{2\pi i|n|z_2}}
                          {1 - e^{2\pi iz_1} q^{|n|}},
\end{gather}
where 
\begin{gather}
  \theta(z; \tau)
:=
  \sum_{r \in \ZZ + \frac{1}{2}} (-1)^{r + \frac{1}{2}} q^{\frac{r^2}{2}} \zeta^r
\end{gather}
is a Jacobi theta function, and
\begin{gather*}
  R(z;\tau)
:=
  \sum_{n \in \ZZ + \frac{1}{2}} \!
  \Big(\, \sgn(n) - E\Big( \sqrt{2 y} \big(n + \tfrac{v}{y}\big) \Big)\, \Big)
  (-1)^{n - \frac{1}{2}} q^{-\frac{n^2}{2}} \zeta^{-n}
\text{,}
\end{gather*}
where
\begin{gather*}
  E(w)
:=
  2 \int_0^w \! e^{- \pi u^2} \;du
=
  \tfrac{\sgn(w)}{\sqrt{\pi}} \gamma\big( \tfrac{1}{2}, \pi w^2 \big)
\end{gather*}
is the error function. Set
\begin{align}
\label{eq:mu_hat_ml_definition}
  {\widehat \mu}_{m,l} (z; \tau)
& :=
  (-1)^m q^{\frac{-(l + m)^2}{4m}} \zeta^{-(l + m)}
\\[4pt]
\nonumber
& \qquad
  \cdot
  \Big( \mu_m\big(\tfrac{1}{2} + (l + m) \tau, \tfrac{1}{4m} - z; \tau\big)
        - \tfrac{i}{2} R\big( 2 m z + (l + m) \tau - \tfrac{2m + 1}{2}; 2 m \tau \big) \Big)
\text{.}
\end{align}


Note that ${\widehat \mu}_{m,l} (z; \tau)$ in (\ref{eq:mu_hat_ml_definition}) coincides with $(-1)^l\, {\widehat \mu}_{2 m, l + m}(u, v; \tau)$ of~\cite{Zwe-multivar-Appell} evaluated at $u = \frac{1}{2}$ and $v = \big( \frac{1}{4m} - z, \ldots, \frac{1}{4m} - z \big)$, and \cite[Theorem~4.5]{Zwe-multivar-Appell} immediately implies:


\begin{proposition}
\label{prop:vector_valued_mu_function}
The vector $(\widehat{\mu}_{m,l})_l$ is a vector-valued Jacobi form of weight $\frac{1}{2}$, index $-m$, and of type ${\rho}_{m}$.  More precisely,
\begin{align*}
  \widehat{\mu}_{m,l} \big|_{\frac{1}{2}, -m} \left[\big(\left(\begin{smallmatrix}1 & 1\\0 & 1\end{smallmatrix}\right), \sqrt{1} \,\big), (0, 0)\right]
& =
  e_{4 m} (-l^2)\, \widehat{\mu}_{m,l}
\qquad\text{and}
\\[6pt]
  \widehat{\mu}_{m,l} \big|_{\frac{1}{2}, -m} \left[\big(\left(\begin{smallmatrix}0 & -1\\1 & 0\end{smallmatrix}\right), \sqrt{\tau} \,\big), (0, 0)\right]
& =
  \frac{i}{\sqrt{2 i m}}
  \sum_{l' \pmod{2 m}} \hspace{-1em} e_{2 m}(l l')\, \widehat{\mu}_{m,l'}
\text{.}
\end{align*}
\end{proposition}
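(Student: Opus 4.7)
The plan is to deduce both transformation formulas directly from Zwegers's Theorem~4.5 in \cite{Zwe-multivar-Appell} by invoking the identity (stated just above the proposition)
\begin{gather*}
  \widehat{\mu}_{m,l}(z;\tau) = (-1)^l\, \widehat{\mu}_{2m,\,l+m}(u,v;\tau)\Big|_{u=\frac{1}{2},\; v=(\frac{1}{4m}-z,\ldots,\frac{1}{4m}-z)}
\text{.}
\end{gather*}
The modular transformations of the multivariable $\widehat{\mu}_{N,j}$ from \cite[Thm.~4.5]{Zwe-multivar-Appell} are stated in terms of an elliptic and a modular piece; substituting $N=2m$, $j=l+m$, and restricting to the diagonal $v=(\tfrac{1}{4m}-z,\ldots,\tfrac{1}{4m}-z)$ collapses the multivariable action to a single-variable Jacobi action in $z$. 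The main work is to identify this collapsed action with the slash action $\big|_{\frac{1}{2},-m}$ and to track the resulting roots of unity.

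First I would fix notation and write out Zwegers's transformation formulas for $\widehat{\mu}_{2m,j}(u,v;\tau)$ under $T$ and $S$, in the form given in \cite[Thm.~4.5]{Zwe-multivar-Appell}. Then I would perform the specialization $u=\tfrac{1}{2}$, $v_i=\tfrac{1}{4m}-z$ step by step. Under $T$, the arguments $u$ and $v$ are unchanged, so only the prefactor from the elliptic piece in $j$ contributes; a routine reindexing $j=l+m$ together with the prefactor $(-1)^l$ in the identity above converts Zwegers's exponential $e^{-\pi i (l+m)^2/(2m)}$ into $e_{4m}(-l^2)$, as required. Under $S$, the arguments become $u\mapsto u/\tau$ and $v\mapsto v/\tau$; expanding $v/\tau = \frac{1}{4m\tau}-\frac{z}{\tau}$ and comparing with the Jacobi slash $\big|_{\frac{1}{2},-m}S$ produces exactly the weight factor $\sqrt{\tau}^{\,-1}$ and the index exponential $e^{-2\pi i m \cdot (-c z^2/\tau)} = e^{2\pi i m z^2/\tau}$ that occur in (\ref{Jacobi-slash}) with $m$ replaced by $-m$; the minus sign on the index is forced precisely because $v$ depends linearly on $-z$. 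The remaining finite sum in Zwegers's formula over $j' \pmod{2m}$ becomes, after shifting $j'=l'+m$, the sum over $l' \pmod{2m}$ of $\widehat{\mu}_{m,l'}$ weighted by $e_{2m}(l l')$ (up to a global factor $\tfrac{i}{\sqrt{2im}}$).

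The Heisenberg invariance under $\big[(I_2,1),(\lambda,\mu)\big]$ follows similarly: translating $z$ by $\lambda\tau+\mu$ corresponds to translating the constant vector $v$ by $-\lambda\tau-\mu$, and Zwegers's elliptic transformation formula in the $v$-variable yields exactly the automorphy factor prescribed by $\big|_{\frac{1}{2},-m}$. Since the pairs $T,S$ together with the Heisenberg translations generate $\JacF$, this establishes the full vector-valued Jacobi transformation law.

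The hard part is purely bookkeeping: matching the multivariable elliptic/modular slash action of \cite{Zwe-multivar-Appell} with our single-variable Jacobi slash action $\big|_{\frac{1}{2},-m}$ after the collapse to the diagonal, and carefully tracking the signs $(-1)^l$, the index shift $l\leftrightarrow l+m$ (which converts $\check\rho_m$ into $\rho_m$ and accounts for the factor $(-1)^m$ in the definition \eqref{eq:mu_hat_ml_definition}), and the normalizing exponential $q^{-(l+m)^2/(4m)}\zeta^{-(l+m)}$ that appears explicitly in \eqref{eq:mu_hat_ml_definition}. No conceptually new input beyond \cite[Thm.~4.5]{Zwe-multivar-Appell} is needed, which is why the paper simply states that the result follows immediately.
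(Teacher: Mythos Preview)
Your proposal is correct and matches the paper's own treatment exactly: the paper gives no proof beyond the sentence ``\cite[Theorem~4.5]{Zwe-multivar-Appell} immediately implies,'' and your outline is precisely the specialization-and-bookkeeping argument that sentence is pointing to. Your closing remark that no new input beyond \cite[Thm.~4.5]{Zwe-multivar-Appell} is needed is on the nose.
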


The following theorem is one of our main results, which provides a theta-like decomposition for \Hharmonic\ Maass-Jacobi forms.

\begin{theorem}
\label{thm:hharmonic_maass_jacobi_forms_thetadecomposition}
Let $m > 0$.  The map
\begin{align}
\label{eq:mu_theta_expansion_map}
  {\rm M}^!_{k - \frac{1}{2},\check \rho_{m}} \times \J_{k,-m} &\longrightarrow \JH_{k,-m},
\\[6pt]
\nonumber
  \big( (h_l)_{l \pmod{2 m}},\, \varphi \big) &\longmapsto \sum_{l \pmod{2 m}} h_l \, \widehat{\mu}_{m,l} + \varphi
\end{align}
is bijective.
\end{theorem}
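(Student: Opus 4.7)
The plan is to construct an explicit two-sided inverse using the operator $\xiJH_{k,-m}$ combined with the theta isomorphism~\eqref{eq:theta_expansion_map_skew} restricted to weakly skew-holomorphic forms. First I would verify that $\Phi := \sum_l h_l \widehat\mu_{m,l} + \varphi$ lies in $\JH_{k,-m}$. Since $\varphi \in \J_{k,-m} \subseteq \JH_{k,-m}$, only the sum $\Psi := \sum_l h_l \widehat\mu_{m,l}$ demands attention. The $\JacF$-invariance of $\Psi$ splits into modular and Heisenberg parts: Proposition~\ref{prop:vector_valued_mu_function} supplies the modular transformation of $(\widehat\mu_{m,l})_l$, and combining with the $\check\rho_m$-modularity of $(h_l)_l$ cancels the $\rho_m$-twist, yielding the correct invariance of weight $k$ and index $-m$. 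The Heisenberg invariance is read off directly from~\eqref{eq:mu_hat_ml_definition} using the elliptic transformation laws of Zwegers's $\mu_m$ and $R$. For the conditions $\cC_{k,-m}(\Psi)=0$, $\lapH_{-m}(\Psi)=0$, and $\xi_{k,-m}(\Psi)=0$, a local Fourier analysis is the key: the meromorphic $\mu_m$-part of $\widehat\mu_{m,l}$ contributes only $c_1$-type coefficients, while the $R$-completion contributes only $c_3$-type coefficients (in the notation of Proposition~\ref{prop:hharmonic_fourierexpansion}), and Propositions~\ref{prop:hharmonic_fourierexpansion} and~\ref{prop:images_of_xi} guarantee that both types are annihilated by all three operators. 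The growth condition~(\ref{it:growth_condition}) of Definition~\ref{def:maassjacobiforms} on non-polar slices is then inherited from the polynomial growth of $\mu_m$ and the uniform boundedness of the error function inside $R$.

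The central computation will be the identity
\begin{gather*}
  \xiJH_{k,-m}\bigl(\widehat\mu_{m,l}\bigr) \;=\; -2\sqrt{\pi}\;\theta_{m,l}.
\end{gather*}
Tracking the Fourier expansion, the $R$-part of $\widehat\mu_{m,l}$ contributes precisely the $c_3$-type coefficients, and Proposition~\ref{prop:images_of_xi} immediately maps each such term to $-2\sqrt{\pi}$ times the corresponding $c_1^{\sk}$-type coefficient of $\theta_{m,l}$. Equivalently, using $E'(w) = 2 e^{-\pi w^2}$ to compute $\partial_{\overline z} R$ and noting that the Gaussian prefactor $\exp(4\pi m v^2 / y)$ in~\eqref{eq:xiJH} exactly cancels the $v$-dependence produced by the derivative leaves behind the theta series~\eqref{eq:theta_series_def}.

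With this identity in hand, surjectivity and injectivity follow formally. Given $\phi \in \JH_{k,-m}$, set $\psi := \xiJH_{k,-m}(\phi)$. By Corollaries~\ref{cor:commuting_xi_operators} and~\ref{cor:semimeromorphic_skew_maass_jacobi_forms}, $\psi \in \holJJsk_{k,m}$, and since $\phi$ is annihilated by $\xi_{k,-m}$ its Fourier coefficients are of types $c_1$ and $c_3$ only, so $\psi$ has only $c_1^{\sk}$-type coefficients and is weakly skew-holomorphic. The isomorphism~\eqref{eq:theta_expansion_map_skew} supplies a unique $(h_l)_l \in {\rm M}^!_{k - \frac{1}{2}, \check\rho_m}$ with $\psi = -2\sqrt{\pi}\sum_l \overline{h_l}\,\theta_{m,l}$; the central identity then yields $\xiJH_{k,-m}\bigl(\sum_l h_l \widehat\mu_{m,l}\bigr) = \psi$, so that $\varphi := \phi - \sum_l h_l \widehat\mu_{m,l} \in \ker(\xiJH_{k,-m}) \cap \JH_{k,-m} = \J_{k,-m}$, giving the preimage. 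Injectivity is immediate from the same chain: if $\Phi = 0$, then $\xiJH_{k,-m}$ annihilates $\varphi$ (via $\JJ_{k,-m}$), so the central identity together with~\eqref{eq:theta_expansion_map_skew} forces $(h_l)_l = 0$, whence $\varphi = 0$.

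The hardest step will be the central identity and the associated claim that $\widehat\mu_{m,l}$ admits a local Fourier expansion whose coefficients are only of types $c_1$ and $c_3$; both require careful bookkeeping of the chain rule for the argument $2mz + (l+m)\tau$ in~\eqref{eq:mu_hat_ml_definition}, together with systematic use of Zwegers's differential identities for $R$ to isolate which coefficient types from Proposition~\ref{prop:hharmonic_fourierexpansion} appear.
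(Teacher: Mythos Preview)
Your approach is essentially the paper's: establish the identity $\xiJH_{k,-m}(\widehat\mu_{m,l}) \propto \theta_{m,l}$, then use the theta decomposition~\eqref{eq:theta_expansion_map_skew} of skew-holomorphic Jacobi forms to build an inverse. The paper's injectivity argument (linear independence of the $\theta_{m,l}$ for fixed~$\tau$) and surjectivity argument (subtract a $\widehat\mu$-combination to land in $\ker\xiJH_{k,-m} = \J_{k,-m}$) are exactly what you describe.

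One point the paper handles with more care is the growth condition~(\ref{it:growth_condition}) of Definition~\ref{def:maassjacobiforms}. The paper first proves the bijection with \emph{all} growth conditions removed on both sides (denoted~\eqref{eq:mu_theta_expansion_map}${}'$), and only afterward invokes Proposition~\ref{prop:singularities_of_maass_jacobi_forms} (whose proof uses only the weaker~\eqref{eq:mu_theta_expansion_map}${}'$) to verify that the growth conditions match. Your surjectivity step appeals to~\eqref{eq:theta_expansion_map_skew} to place $(h_l)_l$ in $M^!_{k-\frac12,\check\rho_m}$, but this presupposes that $\psi = \xiJH_{k,-m}(\phi)$ already satisfies the growth condition defining $\holJsk_{k,m}$; that is not automatic from pointwise bounds on $\phi$ along torsion slices, since $\xiJH$ involves a $\overline z$-derivative. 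Similarly, you do not verify that the meromorphic remainder $\varphi$ satisfies~(\ref{it:growth_condition}). A small caution on your Fourier-coefficient reasoning: $\phi$ itself need not admit a local Fourier expansion (it has moving singularities), so the claim that ``its Fourier coefficients are of types $c_1$ and $c_3$'' is not literally available; the paper instead argues directly from~\eqref{eq:xi_equiv},~\eqref{eq:lapH_factor}, and Corollary~\ref{cor:semimeromorphic_skew_maass_jacobi_forms} that $\psi \in \holJsk_{k,m}$.
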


\begin{remark}
\ 
\begin{enumerate}[(1)]
\item 
If the ``meromorphic part'' $\varphi$ in Theorem~\ref{thm:hharmonic_maass_jacobi_forms_thetadecomposition} has poles only at torsion points, then it has a decomposition into a so-called polar part and a finite part, which admits a theta decomposition involving mock modular forms (for details see~\cite{Zwe-thesis} and~\cite{DMZ}).

\item
Note that Theorem \ref{thm:intro_thetadecomposition} is simply a reformulation of Theorem~\ref{thm:hharmonic_maass_jacobi_forms_thetadecomposition}.  We find that
\begin{gather*}
  \xiJHkm :\, \JHkm \slashdiv \Jkm \tilde\longrightarrow \holJsk_{k,-m}
\end{gather*}
is Hecke equivariant.  In particular, skew-holomorphic Jacobi Hecke eigenforms correspond to ``Hecke eigenforms'' in the subspace of moderate growth \Hharmonic\ Maass-Jacobi forms.
\end{enumerate}
\end{remark}
\begin{proof}[Proof of Theorem~\ref{thm:hharmonic_maass_jacobi_forms_thetadecomposition}]
The map~\eqref{eq:mu_theta_expansion_map} is well-defined by Proposition~\ref{prop:vector_valued_mu_function}.  Note that we have not yet used the growth condition~(\ref{it:growth_condition}) in Definition~\ref{def:maassjacobiforms}.  As a first step, we will establish a weaker version of Theorem~\ref{thm:hharmonic_maass_jacobi_forms_thetadecomposition}, where the growth conditions of the left and right hand sides of~\eqref{eq:mu_theta_expansion_map} are removed.  We will denote this weaker map by \eqref{eq:mu_theta_expansion_map}'.  The theorem then follows from Proposition~\ref{prop:singularities_of_maass_jacobi_forms}, whose proof only relies on the weaker version of Theorem~\ref{thm:hharmonic_maass_jacobi_forms_thetadecomposition}.  For the remainder of this proof we implicitly remove the growth condition for all spaces of modular forms and Jacobi forms that occur.

A direct computation shows that
\begin{gather*}
\xiJH_{\frac{1}{2}, -m} \big( {\widehat \mu}_{m,l}\big)=\theta_{m,l}
\text{,}
\end{gather*}
and the linear independence of $z \mapsto \xiJH_{\frac{1}{2}, -m} \big( {\widehat \mu}_{m,l}(z; \tau) \big)=\theta_{m,l}(\tau,z)$, for $l=1,\ldots,2m$, and for any fixed $\tau$ establishes the injectivity of~\eqref{eq:mu_theta_expansion_map}'.  It remains to prove that~\eqref{eq:mu_theta_expansion_map}' is surjective.  Let $\phi \in \JH_{k,-m}$.  Equations (\ref{eq:xi_equiv}) and (\ref{eq:lapH_factor}) and Corollary~\ref{cor:semimeromorphic_skew_maass_jacobi_forms} imply that $\xiJH_{k,-m} (\phi)\in\holJsk_{k,m}$.  In particular, $\xiJH_{k,-m} (\phi)$ has a theta decomposition of the form $\sum_l \ov{h_l} \, \theta_{m,l}$ (see \eqref{eq:theta_expansion_map}) and $\psi := \sum h_l \, {\widehat \mu}_{m,l} \in \JH_{k,-m}$ by Proposition~\ref{prop:vector_valued_mu_function}.  We have $\xiJH_{k,-m}(\psi) = \xiJH_{k,-m}(\phi)$, so that $\varphi:=\phi - \psi \in \J_{k,-m}$, which yields the surjectivity of~\eqref{eq:mu_theta_expansion_map}'.
\end{proof}

We now prove Theorem~\ref{thm:structure_maass_jacobi_forms}~(\ref{thm:it:singularity_free_hharmonic}), (\ref{thm:it:decomposition_of_maass_jacobi_forms}), and~(\ref{thm:it:positive_index_implies_semi_meromorphic}), where we will repeatedly employ the following fact already used in the proof of Corollary~\ref{cor:semimeromorphic_skew_maass_jacobi_forms}:  If a non-zero semi-holomorphic function $\phi$ satisfies the elliptic transformation property of a Jacobi form of index $m$  (i.e., $\phi$ is invariant under $|_{k,m}\left[\big(\left(\begin{smallmatrix}1 & 0\\0 & 1\end{smallmatrix}\right), \sqrt{1} \,\big), (\lambda, \mu)\right]$ for $\lambda, \mu \in \ZZ$), then $m>0$.  This follows exactly as in the proof of \cite[Theorem~1.2]{EZ}.

\begin{proof}[Proof of Theorem~\ref{thm:structure_maass_jacobi_forms}~(\ref{thm:it:singularity_free_hharmonic})]
If $m < 0$, then $\holJJkm = \holJkm = \{ 0 \}$ by the above fact.  The second equality in the theorem follows from the first, because $\holJHkm = \holJJHkm \cap \ker(\xiJkm)$ and $\holJkm = \holJJkm \cap \ker(\xiJkm)$.  We now show that $\holJJHkm = \holJJkm$. Suppose that $\phi \in \holJJHkm$, but $\phi \not\in \holJJkm$.  Then (\ref{eq:xi_equiv}), (\ref{eq:lapH_factor}), and Corollary~\ref{cor:semimeromorphic_skew_maass_jacobi_forms} imply that $0 \ne \xiJHkm(\phi) \in \holJJsk_{k, -m}$ is semi-holomorphic.  Hence $-m > 0$ and $\xiJHkm(\phi)$ has a theta decomposition of the form $\sum_l \ov{h_l} \, \theta_{m,l}$. We use the same idea as in the proof of Theorem~\ref{thm:hharmonic_maass_jacobi_forms_thetadecomposition}. Consider $\psi(\tau, z) = \sum h_l(\tau) \, {\widehat R}_{m,l}(z; \tau)$, where 
\begin{gather*}
 {\widehat R}_{m,l} (z; \tau)
:=
  (-1)^{m + 1} \tfrac{i}{2} q^{\frac{-(l + m)^2}{4m}} \zeta^{-(l + m)} \;
  R\big( 2 m z + (l + m) \tau - \tfrac{2m + 1}{2}; 2 m \tau \big)
\end{gather*}
is the ``non-holomorphic'' part of (\ref{eq:mu_hat_ml_definition}).  Then $\psi$ (not modular in~$\tau$) has no singularities, and $0 \ne \phi - \psi$ is semi-holomorphic and elliptic in $z$ (see~\cite{Zwe-multivar-Appell}). Thus, $m > 0$.  This contradiction completes the proof.
 



\end{proof}

\begin{proof}[Proof of Theorem~\ref{thm:structure_maass_jacobi_forms}~(\ref{thm:it:decomposition_of_maass_jacobi_forms}) ] Let $\phi \in \JJHkm$.  Then $\xiJHkm (\phi)\in\holJJsk_{k,-m}$ by (\ref{eq:xi_equiv}), (\ref{eq:lapH_factor}),  and Corollary~\ref{cor:semimeromorphic_skew_maass_jacobi_forms}.  In particular, all principal parts of $\phi$ are semi-meromorphic, and hence the same is true for $\xiJkm (\phi)$.  Corollary~\ref{cor:commuting_xi_operators}  and Proposition~\ref{prop:singular_skew_maass_jacobi_forms} imply that $\xiJkm (\phi) \in \holJHsk_{3-k,m}$.  Now, if $\xiJkm (\phi)$ were not annihilated by $\xiJHsk_{3 - k, m}$, then $m < 0$, since $\xiJHsk_{3 - k, m} \big( \xiJkm (\phi) \big)\in\holJ_{3-k,-m}$ is semi-holomorphic. As in the proof of Theorem~\ref{thm:structure_maass_jacobi_forms}~(\ref{thm:it:singularity_free_hharmonic}), we find some $\psi(\tau, z) = \sum h_l(\tau) \, \widehat{R}_{m,l}(z; \tau)$ (not modular in~$\tau$) without singularities such that $0 \ne \xiJkm (\phi) - \psi$ is semi-holomorphic and elliptic in $z$.  Then $m > 0$, which is a contradiction to our previous finding.

Thus, $\xiJkm (\phi) \in \holJsk_{3-k,m}$.  Recall from the proof of 
Theorem~\ref{thm:structure_maass_jacobi_forms}~(\ref{thm:it:semimeromorphic_maass_jacobi_forms}) that $\xiJkm :\, \holJJkm \rightarrow \holJsk_{3 -k, m}$ is surjective.  Hence there exists a $\widetilde{\phi}\in\holJJkm $ such that $\phi - \widetilde{\phi}$ vanishes under $\xiJkm$, which establishes the claim.
\end{proof}

\begin{proof}[Proof of Theorem~\ref{thm:structure_maass_jacobi_forms}~(\ref{thm:it:positive_index_implies_semi_meromorphic})]
If $\phi \in \JJHkm$, then $\xiJHkm (\phi)\in\holJJsk_{k,-m}$ by (\ref{eq:xi_equiv}), (\ref{eq:lapH_factor}),  and Corollary~\ref{cor:semimeromorphic_skew_maass_jacobi_forms}. Moreover, if $\xiJHkm \big(\phi\big) \ne 0$, then Corollary~\ref{cor:semimeromorphic_skew_maass_jacobi_forms} asserts that $-m > 0$, yielding the first equality. The second equality follows from the first, since $\JHkm = \JJHkm \cap \ker(\xiJkm)$ and $\Jkm = \JJkm \cap \ker(\xiJkm)$.
\end{proof}

We have now settled all analytic and structural properties of \Hharmonic\ Maass Jacobi forms.  We emphasize that we have not yet used the growth condition~(\ref{it:growth_condition}) of Definition~\ref{def:maassjacobiforms}. To complete the proof of Theorem~\ref{thm:hharmonic_maass_jacobi_forms_thetadecomposition}, we have to show that the growth condition~(\ref{it:growth_condition}) of Definition~\ref{def:maassjacobiforms} implies the growth condition for harmonic weak Maass forms on the left hand side of~\eqref{eq:mu_theta_expansion_map}.
\begin{proposition}
\label{prop:singularities_of_maass_jacobi_forms}
Fix $\phi \in \JJHkm$.  Then for all but finitely many $\alpha, \beta \in \QQ \pmod{\ZZ}$, the set $\{(\tau, \alpha\tau + \beta) \,:\, \tau \in \HS\}$ is not a polar divisor of $\phi$.  For every such $\alpha, \beta$, the function $\phi(\tau, \alpha \tau + \beta)$ has no singularities for sufficiently large~$y$.
\end{proposition}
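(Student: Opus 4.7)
The plan is to use the structural results of Sections~\ref{sec:singluarjacobiforms}~and~\ref{sec:thetadecompositions} to decompose $\phi$ into pieces whose polar loci can be described explicitly. By Theorem~\ref{thm:structure_maass_jacobi_forms}(\ref{thm:it:semimeromorphic_maass_jacobi_forms})--(\ref{thm:it:positive_index_implies_semi_meromorphic}) together with the growth-free form of Theorem~\ref{thm:hharmonic_maass_jacobi_forms_thetadecomposition} (the version established earlier in this section), I would write
\begin{gather*}
\phi \;=\; \phi_{\mathrm{sh}} \;+\; \phi_{\mathrm{mer}} \;+\; \sum_{l} h_l\, \widehat{\mu}_{|m|,l},
\end{gather*}
where $\phi_{\mathrm{sh}}$ is semi-holomorphic (and hence non-singular), $\phi_{\mathrm{mer}} \in \Jkm$ is a meromorphic Jacobi form, and the $\widehat{\mu}$\nbd sum appears only when $m<0$, with $(h_l)_l$ the coordinates of a weakly holomorphic vector-valued modular form. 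It then suffices to verify the two claims separately for $\phi_{\mathrm{mer}}$ and for each $\widehat{\mu}_{|m|,l}$.

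For the $\widehat{\mu}$\nbd summands I would read off the singular locus directly from~\eqref{eq:mu_hat_ml_definition}: the $R$\nbd function is real-analytic, and the denominators $1-e^{2\pi i z_1}q^{|n|}$ appearing in the series for $\mu_{|m|}$ cannot vanish for $\tau\in\HH$ (their vanishing would force $\tau$ to be real), so the full singular set comes from the factor $\theta(\tfrac{1}{4|m|}-z;\tau)^{2|m|}$ in the denominator, giving poles precisely along $z\in\tfrac{1}{4|m|}+\ZZ\tau+\ZZ$. A rational line $z=\alpha\tau+\beta$ meets this set only if $(\alpha,\beta)\equiv(0,\tfrac{1}{4|m|})\pmod{\ZZ^2}$; for every other choice the line avoids this polar set entirely, yielding both parts of the proposition for this contribution.

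For $\phi_{\mathrm{mer}}$ the polar set $D\subset\HH\times\CC$ is a closed analytic subvariety invariant under $\Gamma^{\rm J}$. Heisenberg invariance together with the classical fact that a meromorphic elliptic function has only finitely many poles per fundamental parallelogram shows that $D/\ZZ^2$ has only finitely many irreducible analytic components $D_1,\ldots,D_r$. Only finitely many of these can coincide with a rational torsion section $z\equiv\alpha\tau+\beta\pmod{\ZZ\tau+\ZZ}$, settling the ``all but finitely many'' statement. For a rational line $L_{\alpha,\beta}$ not contained in $D$, each intersection $L_{\alpha,\beta}\cap D_i$ is a discrete subset of $L_{\alpha,\beta}$. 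Since $\SL{2}(\ZZ)$ permutes the finitely many $D_i$, each $D_i$ is stabilized by some $T^{N}$, so near the cusp $i\infty$ it admits a Puiseux-type expansion in $q^{1/N}$; the intersection $L_{\alpha,\beta}\cap D_i$ then identifies with the zero set of a holomorphic function of $q^{1/N}$ on a punctured neighborhood of~$0$, and as $L_{\alpha,\beta}$ is not a component of $D_i$ this zero set cannot accumulate at $q=0$. Hence the intersection points are bounded in $y$.

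The main obstacle is this last cusp-asymptotic step: one must verify that every $\Gamma^{\rm J}$-invariant irreducible component $D_i$ genuinely extends to a controlled divisor in a suitable compactification at $i\infty$, which is what prevents accumulation of non-polar intersections at the cusp. This relies on the finite-index property of the $\SL{2}(\ZZ)$-stabilizer of $D_i$ and a standard removal-of-singularities argument using the uniformizer $q^{1/N}$; the remainder of the proof is an explicit inspection of known objects.
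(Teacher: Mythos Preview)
Your overall plan is sound and, for the second assertion, lands essentially where the paper does. The main difference is in how you handle the \emph{first} assertion.

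For ``all but finitely many $(\alpha,\beta)$'', the paper bypasses the structural decomposition entirely. It fixes a single $\tau_0\in\HS$ and appeals only to Corollary~\ref{cor:singularities_of_maass_jacobi_forms}: since $\partial_{\ov z}\phi$ is nonsingular, the singularities of $z\mapsto\phi(\tau_0,z)$ are isolated, hence finite modulo $\ZZ\tau_0+\ZZ$. If the line $z=\alpha\tau+\beta$ were a polar divisor of $\phi$, then in particular $\alpha\tau_0+\beta$ would lie among these finitely many points; since $(\alpha,\beta)\bmod\ZZ^2\mapsto\alpha\tau_0+\beta\bmod(\ZZ\tau_0+\ZZ)$ is injective for $\tau_0\in\HS$, only finitely many classes $(\alpha,\beta)\bmod\ZZ^2$ qualify. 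This one-line counting argument replaces your irreducible-component analysis of the polar divisor of $\phi_{\mathrm{mer}}$, which is correct but considerably heavier.

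For the second assertion the paper proceeds as you do: reduce via Theorem~\ref{thm:structure_maass_jacobi_forms} to $\JHkm$, then use the growth-free form of~\eqref{eq:mu_theta_expansion_map} to split off $\sum_l h_l\,\widehat{\mu}_{|m|,l}$ from a meromorphic remainder~$\psi$. Your explicit identification of the pole locus of $\widehat{\mu}_{m,l}$ along a single torsion class is right and is what the paper uses implicitly when it says ``if $\widehat{\mu}_{m,l}(\tau,\alpha\tau+\beta)$ is defined, then it has no singularities''. For the meromorphic piece the paper is far terser than you are: it simply records that $\psi(\tau,\alpha\tau+\beta)$ is meromorphic in~$\tau$ and asserts the conclusion. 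Your Puiseux/compactification sketch (finite $\SL{2}(\ZZ)$-orbit of components, hence $T^N$-invariance, hence extension over $q^{1/N}=0$) is a reasonable way to justify that the poles of this restriction cannot accumulate at the cusp; what you flag as the ``main obstacle'' is exactly the content the paper leaves implicit.
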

\begin{proof}
Fix $\tau \in \HS$.  By Corollary~\ref{cor:singularities_of_maass_jacobi_forms}, the set of singularities of $\phi(\tau, \,\cdot\,)$ is discrete in~$\CC$.  In particular, there are at most finitely many $\alpha, \beta \in \QQ \pmod{\ZZ}$ such that $\phi(\tau, \alpha \tau + \beta)$ is a pole.  This proves the first part.

To establish the second part, it suffices to show the claim for $\phi \in \JHkm$, since $\JJHkm = \holJJkm + \JHkm$ by Theorem~\ref{thm:structure_maass_jacobi_forms}.  We employ the map~\eqref{eq:mu_theta_expansion_map}' defined in the proof of Theorem~\ref{thm:hharmonic_maass_jacobi_forms_thetadecomposition}.  Write $\phi$ as $\sum_{l \pmod{2 m}} h_l\, \widehat{\mu}_{m, l} + \psi$.  Note that if $\alpha, \beta \in \QQ$ such that $\widehat{\mu}_{m, l}(\tau,\, \alpha\tau + \beta)$ is defined, then it has no singularities.  Hence it remains to consider the meromorphic Jacobi form~$\psi$.  Now, since $\psi$ is meromorphic, $\psi(\tau, \alpha \tau + \beta)$ is meromorphic, too.  This implies that for sufficiently large $y$, it has no singularities, proving the proposition.
\end{proof}

We conclude the section with a remark.
\begin{remark}
In Example~\ref{ex:maass_jacobi_spaces}~(\ref{ex:maass_jacobi_spaces:it:mu_function}), we pointed out that Zwegers's \cite{Zwe-thesis} $\widehat{\mu}$-function has a decomposition of the form $\widehat{\mu} = \mu_1 + \widehat{\mu}_2$, where $\widehat{\mu}_2 \in \JH_{\frac{1}{2}, -\frac{1}{2}}$.  Such a decomposition can for example be found by setting
\begin{gather}
\label{eq:mu_onehalf_def}
  \widehat{\mu}_2 (z; \tau)
:=
  \widehat{\mu} \big( z + \tfrac{1 + \tau}{2}, \tfrac{1 + \tau}{2}; \tau \big)
\text{.}
\end{gather}
Up to meromorphic Jacobi forms, $\widehat{\mu}_{2}$ is essentially the only Jacobi form that can be obtained as a ``specialization'' of $\widehat{\mu}$ (see~\cite{Z-Bourbaki}).
Moreover, there is no meromorphic Jacobi form $h$ such that $\widehat{\mu}_2 + h$ has no singularities.  One can see this by considering the residues of the poles of $z \mapsto \widehat{\mu}_2(z; \tau)$.  More precisely, suppose that $g$ is a meromorphic Jacobi form of index~$0$ such that the Jacobi form (on $\Mp{2}(\ZZ) \ltimes (2 \ZZ)^2$)
\begin{gather}
\label{eq:mu_potential_correction}
  \widehat{\mu}_{2} (z; \tau)
- \frac{g(\tau, z)}
       {e^{\pi i z} \theta \big(\tau, z + \frac{1 + \tau}{2}\big)}
\end{gather}
has no singularities. Then $g$ is holomorphic, since the zeros of the denominator of the second term in (\ref{eq:mu_potential_correction}) occur precisely where $\widehat{\mu}_2$ has simple poles.  Thus, $g$ is independent of $z$, i.e., $g$ is a weakly holomorphic modular form.  Suppose that the residues at $\frac{-1 + \tau}{2}$ of the first and second term in (\ref{eq:mu_potential_correction}) are the same.  Then the transformation behavior of $\widehat{\mu}$ and $\theta$ under $z \mapsto z + 1$ shows that the residues of these terms at $\frac{1 + \tau}{2}$ differ by a sign.  In particular, the residues will not cancel, and hence there is no $g$ such that (\ref{eq:mu_potential_correction}) has no singularities.
\end{remark}


\section{H-quasi Maass-Jacobi forms}
\label{sec:quasijacobiforms}

Kaneko and Zagier~\cite{Kan-Z} introduced the space of quasimodular forms, which includes the Eisenstein series~$E_2$.  Quasimodular forms impact various aspects of automorphic forms and physics, and the theory has been extended to the setting of Jacobi forms~(for example,  see \cite{Kaw-Yos, Lib-MSRI11}).  The notion of quasi-Jacobi forms in the literature mimics the definition of quasimodular forms by Kaneko and Zagier somewhat closely, and ``quasimodular behavior'' with respect to the Jacobi variable $z$ has not been considered thus far.  In this section, we fill this gap by introducing completed \Hquasi\ Maass-Jacobi forms (see Definition~\ref{def:quasi_maass_jacobi_forms}).  Note that examples of such forms have recently appeared as generating functions of Gromov Witten invariants in~\cite{Ober}.  The main result of this section (Theorem~\ref{thm:Hquasi_structure}) gives a characterization of completed \Hquasi\ Maass-Jacobi forms in terms of  \Hharmonic\ Maass-Jacobi forms, which implies that there exists no Jacobi form analog of the quasimodular Eisenstein series~$E_2$.

With an abuse of notation we suppress from now on the superscripts and simply write $X_\pm$ and $Y_\pm$ for the operators defined in Section~\ref{sec:differentialoperators}.  Recall that every quasimodular form can be completed to a (real-analytic) modular form $f:=\sum_{d = 0}^{D-1} y^d f_d$ with holomorphic $f_d$. Then $f$ is annihilated by  $X_-^D$.  More generally, if the functions $f_d$ are only harmonic, then $f$ is annihilated by $X_+^D X_-^D$.  This motivates the next definition of completed \Hquasi\ Maass-Jacobi forms, where as before $m \ne 0$.

\begin{definition}
\label{def:quasi_maass_jacobi_forms}
Let $\phi :\, \HS \times \CC \rightarrow \CC$ be a real-analytic function except for possible singularities of type $f g^{-1}$, where $f$ and $g$ are real-analytic, such that the singularities of $\phi(\tau, \,\cdot\,)$ are isolated for every $\tau \in \HS$.  Then $\phi$ is a completed \Hquasi\ Maass-Jacobi form of weight $k$, index $m$, and depth~$D$ if the following conditions are satisfied:
\begin{enumerate}[(1)]
\item For all $A \in \JacF$, we have $\phi \big|_{k, m} A = \phi$.
\vspace{1ex}
\item We have that $\cC_{k,m}(\phi)=0$.
\vspace{1ex}
\item \label{it:quasi_harmonicity_condition} We have that $Y_+^D Y_-^D \big(\phi\big) = 0$.
\vspace{1ex}
\item For every $\alpha, \beta \in \QQ$ such that $\{(\tau, \alpha\tau + \beta) \,:\, \tau \in \HS\}$ is not a polar divisor of $\phi$, we have that $\phi (\tau, \alpha\tau+\beta) = O\big( e^{a y} \big)$ as $y \rightarrow \infty$ for some $a>0$.
\end{enumerate}
\end{definition}

\begin{remark}
\label{rm:hquasi_definition}
\
\begin{enumerate}[(1)]
\item  \Hquasi\ Maass-Jacobi forms of depth~$D=1$ are \Hharmonic\ Maass-Jacobi forms.
\item 
\label{rm:skew_quasi} One can define completed \Hquasi\ skew-Maass-Jacobi forms by replacing $\big|_{k,m}$, $\cC_{k,m}$, and  $Y_{\pm}$ in Definition~\ref{def:quasi_maass_jacobi_forms} with their skew-analogs.
\item  Observe that the commutator $[Y_-, Y_+]=- 2 \pi m$.  Hence the operator $Y_+^D Y_-^D$ can be expressed as a polynomial in the Heisenberg Laplace operator:
\begin{gather}
\label{eq:quasi_differential_operator_factorization}
  Y_+^D Y_-^D
=
  \prod_{d = 0}^{D-1} \big( \lapH_m + 2 \pi m d \big)
\text{.}
\end{gather}
Analogously, in the quasimodular setting $X_+^D X_-^D$ can be expressed as a polynomial in the hyperbolic Laplace operator:
\begin{gather*}
  X_+^D X_-^D
=
  \prod_{d = 0}^{D - 1} \big( \Delta_k + (k - 2 d)_d \big)
\text{,}
\end{gather*}
where $(n)_l := \prod_{i = 0}^{l - 1} (n - i)$ is the Pochhammer symbol ($(n)_0 := 0$).
\end{enumerate}
\end{remark}

The following two results give descriptions of completed \Hquasi\ Maass-Jacobi forms.
\begin{lemma}
\label{lm:semiholomorphic_quasi_maass_jacobi_forms}
Let $\phi$ be a completed \Hquasi\ Maass-Jacobi form of weight~$k$, index~$m$, and depth~$D$ that is annihilated by $Y_-^D$.  Then
\begin{gather*}
  \phi
=
  \sum_{d = 0}^{D - 1} Y_+^d \big( \phi_d \big)
\text{,}
\end{gather*}
where $\phi_d \in\JJ_{k - d, m} $.
\end{lemma}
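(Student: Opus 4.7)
The plan is to prove the lemma by induction on the depth~$D$. The base case $D = 1$ is immediate, since $Y_-(\phi) = 0$ together with the remaining \Hquasi\ axioms of Definition~\ref{def:quasi_maass_jacobi_forms} places $\phi$ directly in $\JJ_{k,m}$; take $\phi_0 := \phi$. For $D \geq 2$, the strategy is to peel off the top-depth piece by setting
$$
  \phi_{D-1} := \frac{(-1)^{D-1}}{(D-1)!\,(2\pi m)^{D-1}}\; Y_-^{D-1}(\phi),
$$
and proving that $\phi_{D-1} \in \JJ_{k-(D-1), m}$ and that $\phi' := \phi - Y_+^{D-1}(\phi_{D-1})$ is a completed \Hquasi\ Maass-Jacobi form of weight~$k$ and index~$m$ of depth at most $D-1$; the induction hypothesis applied to $\phi'$ then produces the required decomposition.

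The central computation is the identity
$$
  Y_-^{D-1}\, Y_+^{D-1}(\psi) = (D-1)!\,(-2\pi m)^{D-1}\, \psi
\quad\text{whenever } Y_-(\psi) = 0,
$$
proved by a short induction on $D$ using the commutator $[Y_-, Y_+] = -2\pi m$ recorded in Remark~\ref{rm:hquasi_definition}. Since $Y_-(\phi_{D-1}) = 0$ by virtue of the hypothesis $Y_-^D(\phi) = 0$, applying this identity to $\phi_{D-1}$ and combining with the chosen normalization yields $Y_-^{D-1}\, Y_+^{D-1}(\phi_{D-1}) = Y_-^{D-1}(\phi)$, so that $Y_-^{D-1}(\phi') = 0$.

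For the remaining axioms on $\phi_{D-1}$ and $\phi'$: the $\JacF$\nbd invariance follows from the equivariance of $Y_\pm$ recorded in Section~\ref{sec:differentialoperators}; the condition $Y_-(\phi_{D-1}) = 0$ translates via~\eqref{eq:xiJH} to $\xiJH_{k-(D-1), m}(\phi_{D-1}) = 0$; and the growth and singularity conditions transfer because $Y_\pm$ are first-order differential operators with polynomial coefficients in $y$ and~$v$. The main obstacle is the Casimir annihilation $\cC_{k-(D-1), m}(\phi_{D-1}) = 0$, which amounts to the commutation $\cC_{k-1, m} \circ Y_-^{k,m} = Y_-^{k,m} \circ \cC_{k, m}$ together with its analogue for $Y_+$. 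These identities express the centrality of the Casimir element in the universal enveloping algebra of the Jacobi Lie algebra and can be verified directly from the formula~\eqref{Casimir} using the Lie-bracket relations among~$X_\pm$ and~$Y_\pm$ (cf.\ \cite{BS, Raum-thesis}); iterating them gives $\cC_{k-(D-1), m}(\phi_{D-1}) = 0$ and $\cC_{k,m}(\phi') = 0$, closing the induction.
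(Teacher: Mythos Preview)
Your proof is correct and follows essentially the same inductive strategy as the paper: peel off the top-depth piece $\phi_{D-1} \propto Y_-^{D-1}(\phi)$, subtract $Y_+^{D-1}(\phi_{D-1})$, and verify that the remainder has depth $D-1$. The only cosmetic difference is that the paper organizes the key computation $Y_-^{D-1}\big(\phi - Y_+^{D-1}(\phi_{D-1})\big) = 0$ via the factorization~\eqref{eq:quasi_differential_operator_factorization} and the relation $[Y_-,\lapH_m] = -2\pi m\, Y_-$, whereas you obtain it directly from the Heisenberg commutator $[Y_-, Y_+] = -2\pi m$; these are equivalent.
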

\begin{proof}
We induct on~$D$.  The case~$D = 1$ is clear by Definition~\ref{def:maassjacobiforms}.  Let $D > 1$, and set $\phi_D := Y_-^{D - 1} (\phi)$.  Then  $\phi_D \in\JJ_{k +1- D, m} $. Consider $\widetilde{\phi} := \phi - \big( (-2 \pi m)^{D - 1} (D - 1)! \big)^{-1} Y_+^{D-1} \big( \phi_D \big)$. With the help of (\ref{eq:quasi_differential_operator_factorization}) and the fact that $[Y_-,\lapH_m]=- 2 \pi mY_-$, we verify that
\begin{align*}
  Y_-^{D - 1} \, \big( \widetilde{\phi} \big)
&=
 Y_-^{D-1} (\phi)
  - \frac{1}{(-2 \pi m)^{D - 1} (D - 1)!} \;
    Y_-^{D-1} \Big( \prod_{d = 0}^{D - 2} \big( \lapH_m + 2 \pi m d \big) \Big) \big( \phi \big)
\\[4pt]
&=
 Y_-^{D-1} (\phi)
  - \frac{1}{(-2 \pi m)^{D - 1} (D - 1)!} \;
    \Big( \prod_{d = 0}^{D - 2} \big( \lapH_m + 2 \pi m d - 2 \pi m (D - 1) \big) \Big) Y_-^{D-1} \big( \phi \big)
\\[4pt]
&=
 \phi_D
  - \frac{1}{(-2 \pi m)^{D - 1} (D - 1)!} \;
    \prod_{d = 1}^{D - 1} \big( -2 \pi m d \big) \big( \phi_{D} \big)
=
  0
\text{.}
\end{align*}
Thus, $\widetilde{\phi}$  is a completed \Hquasi\ Maass-Jacobi form of weight~$k$, index~$m$, and depth~$D - 1$, and the claim follows by induction.
\end{proof}

\begin{theorem}
\label{thm:Hquasi_structure}
Let $\phi$ be a completed \Hquasi\ Maass-Jacobi form of weight $k$, index $m$, and depth~$D$.  Then
\begin{gather*}
  \phi
=
  \sum_{d = 0}^{D - 1} Y_+^d \big( \phi_d \big)
\text{,}
\end{gather*}
where $\phi_d \in \JJH_{k - d, m} $.
\end{theorem}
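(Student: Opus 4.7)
The plan is to exploit the factorization $Y_+^D Y_-^D = \prod_{d=0}^{D-1}(\lapH_m + 2\pi m d)$ from~\eqref{eq:quasi_differential_operator_factorization} in order to split $\phi$ into $\lapH_m$-eigencomponents and then invert $Y_+^d$ on each one. Since $m \ne 0$, the linear factors $x + 2\pi m d$ for $0 \le d \le D-1$ are pairwise coprime, so Lagrange interpolation produces polynomials $p_0, \ldots, p_{D-1}$ with $\sum_d p_d \equiv 1$ and such that each $(x + 2\pi m d)\, p_d(x)$ is a scalar multiple of $\prod_{d'=0}^{D-1}(x + 2\pi m d')$. Setting $\phi^{(d)} := p_d(\lapH_m)(\phi)$, these polynomial identities together with $Y_+^D Y_-^D \phi = 0$ yield
\[
\phi \;=\; \sum_{d=0}^{D-1} \phi^{(d)} \quad\text{and}\quad (\lapH_m + 2\pi m d)\, \phi^{(d)} \;=\; 0.
\]

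Next I would verify that each $\phi^{(d)}$ inherits from $\phi$ the Jacobi transformation behavior, the Casimir annihilation, the growth bound, and the restriction on singularities. Covariance of $Y_\pm$ under $|_{k,m}$ makes $\lapH_m$, hence any polynomial $p_d(\lapH_m)$, commute with the slash action, so $\phi^{(d)}|_{k,m} A = \phi^{(d)}$ for all $A \in \JacF$. Because $\cC_{k,m}$ is central in the universal enveloping algebra of the Jacobi Lie algebra, it commutes with $\lapH_m = Y_+ Y_-$, giving $\cC_{k,m}(\phi^{(d)}) = p_d(\lapH_m)\,\cC_{k,m}(\phi) = 0$. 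The growth bound and the $fg^{-1}$-type of singularities are preserved by the differential operator $p_d(\lapH_m)$.

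The third step is to invert $Y_+^d$ on $\phi^{(d)}$. A short commutator calculation from $[Y_+, Y_-] = 2\pi m$ gives $Y_+ Y_-^2 = Y_- Y_+ Y_- + 2\pi m Y_-$, so on an eigenfunction $\lapH_m \psi = \lambda\psi$ one has $\lapH_m(Y_-\psi) = (\lambda + 2\pi m)\, Y_-\psi$; iterating $d$ times on $\phi^{(d)}$ shows that $Y_-^d \phi^{(d)}$ is H-harmonic. I would therefore set
\[
\phi_d \;:=\; \frac{1}{(-2\pi m)^d\, d!}\, Y_-^d \phi^{(d)},
\]
which lies in $\JJH_{k-d,m}$: the slash invariance transfers via the covariance of $Y_-$; the Casimir equation via the intertwining $\cC_{k-d,m}\circ Y_-^d = Y_-^d\circ \cC_{k,m}$ (another consequence of centrality); and the growth and singularity conditions are preserved by $Y_-^d$. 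Evaluating the factorization \eqref{eq:quasi_differential_operator_factorization} on the eigenfunction $\phi^{(d)}$ now gives
\[
Y_+^d Y_-^d\, \phi^{(d)} \;=\; \prod_{j=0}^{d-1}(-2\pi m d + 2\pi m j)\; \phi^{(d)} \;=\; (-2\pi m)^d\, d!\; \phi^{(d)},
\]
so $Y_+^d \phi_d = \phi^{(d)}$, and summing over $d$ yields the desired decomposition $\phi = \sum_{d=0}^{D-1} Y_+^d \phi_d$.

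The main obstacle is the verification of the centrality statements $[\cC_{k,m}, \lapH_m] = 0$ and $\cC_{k-1, m}\circ Y_- = Y_-\circ \cC_{k, m}$; both reduce to the centrality of the Jacobi Casimir in the universal enveloping algebra, but require careful tracking of weight shifts through the explicit formula~\eqref{Casimir}. Once these are in place, the argument is essentially formal and parallels the descent carried out in the proof of Lemma~\ref{lm:semiholomorphic_quasi_maass_jacobi_forms}, with the simultaneous Lagrange extraction of all generalized eigencomponents replacing the single iterative subtraction used there.
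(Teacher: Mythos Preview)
Your argument is correct and takes a genuinely different route from the paper.  The paper proceeds by applying the $\xi^{\rm H}$-type map to $Y_-^D(\phi)$ to obtain a completed \Hquasi\ skew-Maass-Jacobi form $\widetilde\phi$ of index~$-m$, decomposes $\widetilde\phi$ via the skew analogue of Lemma~\ref{lm:semiholomorphic_quasi_maass_jacobi_forms}, invokes Proposition~\ref{prop:singular_skew_maass_jacobi_forms} to force $m<0$, and then lifts each component back through $\xi^{\rm H}$ using the $\widehat\mu$-preimages constructed in Section~\ref{sec:thetadecompositions}; after subtracting this lift, $Y_-^D$ vanishes and Lemma~\ref{lm:semiholomorphic_quasi_maass_jacobi_forms} finishes the job.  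Your approach sidesteps all of this: because $m\ne 0$ the linear factors in~\eqref{eq:quasi_differential_operator_factorization} are distinct, so Lagrange interpolation splits $\phi$ directly into $\lapH_m$-eigencomponents, and the commutator $[Y_+,Y_-]=2\pi m$ lets you invert $Y_+^d$ on each one by applying $Y_-^d$.  This is cleaner and self-contained --- it needs neither the skew theory, nor Proposition~\ref{prop:singular_skew_maass_jacobi_forms}, nor the existence of $\xi^{\rm H}$-preimages, nor even Lemma~\ref{lm:semiholomorphic_quasi_maass_jacobi_forms}.  The only point requiring care, which you flag correctly, is that the intertwining relations $\cC_{k,m}\circ\lapH_m=\lapH_m\circ\cC_{k,m}$ and $\cC_{k-1,m}\circ Y_-=Y_-\circ\cC_{k,m}$ must be checked; both follow from the fact that $\cC_{k,m}$ arises from a central element of the universal enveloping algebra (cf.\ the references for~\eqref{Casimir}), so this is not a gap.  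The paper's route has the virtue of tying the result back into the $\xi$-operator framework developed earlier, but your argument is the more economical proof of the theorem as stated.
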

\begin{proof}
First note that for any $\phi$ and $m < 0$, we have
\begin{gather}
\label{eq:intertwining_skew_raising}
  \sqrt{-m y}^{\, -1} \exp\big(- 4 \pi m \tfrac{v^2}{y} \big) \; \ov{Y_+ (\phi)}
= 
  Y^\sk_- \Big( \sqrt{-m y}^{-1} \exp\big(- 4 \pi m \tfrac{v^2}{y} \big) \, \ov{\phi} \Big)
\text{.}
\end{gather}
Similar relations hold also for $Y_-$ and $Y_+^\sk$.

Now, set $\widetilde{\phi} :=  \sqrt{-my}^{-1} \exp\big(-4 \pi m \tfrac{v^2}{y}\big)\, \ov{Y_-^D (\phi)}$, and assume that $\widetilde{\phi} \ne 0$. Then $\widetilde{\phi}$ is a completed \Hquasi\ skew-Maass-Jacobi form of weight $k+1-D$,  index $-m$, and depth $D$ (see Remarks~\ref{rm:hquasi_definition}~(\ref{rm:skew_quasi})).  Using Relation~(\ref{eq:intertwining_skew_raising}), we confirm that $\widetilde{\phi}$ vanishes under $\big(Y_-^\sk\big)^D$.  It is easy to extend Lemma~\ref{lm:semiholomorphic_quasi_maass_jacobi_forms} to completed \Hquasi\ skew-Maass-Jacobi forms, and we find that
\begin{gather*}
  \widetilde{\phi}
=
  \sum_{d = 0}^{D - 1} \big( Y_+^\sk \big)^d \big( \phi_d \big)
\text{,}
\end{gather*}
for some $\phi_d \in\JJsk_{k +1 - D + d, -m}$. 
Proposition~\ref{prop:singular_skew_maass_jacobi_forms} implies that $m < 0$.  Let $\phi^{[\mu]}_{d} \in \JJH_{k + 1 - D + d, m}$ denote the preimage of $\phi_d$ under $\xiJH_{k + 1 - D + d,m}$.  If
\begin{gather*}
  \phi^{[\mu]}
:=
  \sum_{d = 0}^{D - 1} \frac{1}{(- 2 \pi m)^{D - d - 1} (D - d - 1)!} \, Y_+^{D - d - 1} \big( \phi^{[\mu]}_d \big)
\text{,}
\end{gather*}
then $\phi - \phi^{[\mu]}$ vanishes under $Y_-^D$.  Indeed, the image of $\phi^{[\mu]}$ under $Y_-^D$ is given by
\begin{gather*}
  \sum_{d = 0}^{D - 1} \frac{1}{(- 2 \pi m)^{D - d - 1} (D - d - 1)!} \;
    Y_{-}^{d + 1} \prod_{d' = 1}^{D - d - 1} \big( \lapH_m - 2 \pi m d' \big) \big( \phi_d^{[\mu]} \big)
=
  \sum_{d = 0}^{D - 1}Y_{-}^{d + 1} \big( \phi_d^{[\mu]} \big)
\text{.}
\end{gather*}

We obtain
\begin{align*}
  \big( Y_- \big)^D (\phi)
&=
  \ov{ \sqrt{-my}\exp\big( 4 \pi m \tfrac{v^2}{y} \big)\, \widetilde{\phi} }
=
  \sqrt{-my}\exp\big( 4 \pi m \tfrac{v^2}{y} \big) \,
   \ov{ \sum_{d = 0}^{D - 1} \big( Y_+^\sk \big)^d \big( \phi_d \big) }
\\[4pt]
&=
  \sum_{d = 0}^{D - 1} Y_-^d \Big( \sqrt{-my}\exp\big( 4 \pi m \tfrac{v^2}{y} \big) \ov{ \phi_d } \Big)
=
  \sum_{d = 0}^{D - 1} Y_-^d \Big( Y_- \big(  \phi_d^{[\mu]} \big) \Big)
\text{,}
\end{align*}
which proves the theorem after applying Lemma~\ref{lm:semiholomorphic_quasi_maass_jacobi_forms}.
\end{proof}
As an immediate consequence of Theorem  \ref{thm:Hquasi_structure} we record:
\begin{corollary}
The space of all completed \Hquasi\ Maass-Jacobi forms of weight $k$ and index $m$ equals
\begin{gather*}
  \bigoplus_{d = 0}^\infty Y_+^d \, \Big( \JJH_{k - d, m} \Big)
\text{.}
\end{gather*}
\end{corollary}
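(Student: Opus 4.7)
The plan is to prove the equality as two subspace inclusions together with directness of the sum. Theorem~\ref{thm:Hquasi_structure} immediately supplies the inclusion $\subseteq$: any completed \Hquasi\ Maass-Jacobi form $\phi$ has some finite depth $D$, and the theorem provides a decomposition $\phi=\sum_{d=0}^{D-1}Y_+^d(\phi_d)$ with $\phi_d\in\JJH_{k-d,m}$, which manifestly lies in $\sum_{d\ge 0}Y_+^d(\JJH_{k-d,m})$.

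For the reverse inclusion, I will take $\phi_d\in\JJH_{k-d,m}$ and verify the four conditions of Definition~\ref{def:quasi_maass_jacobi_forms} for $Y_+^d(\phi_d)$. The slash-invariance follows from the intertwining relation $Y_+(\phi|_{k',m}A)=(Y_+\phi)|_{k'+1,m}A$ of Section~\ref{sec:differentialoperators}. Casimir annihilation follows because the Casimir is central in the universal enveloping algebra of the Jacobi Lie algebra, so that $\cC_{k,m}\circ Y_+^d=Y_+^d\circ\cC_{k-d,m}$, and the right-hand side annihilates $\phi_d$. The growth condition is preserved since $Y_+$ is a first-order differential operator whose coefficient $v/y$ is bounded along any line $z=\alpha\tau+\beta$ as $y\to\infty$, and derivatives of real-analytic functions of at most exponential growth retain exponential growth with the same exponent. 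The key step is the depth condition: the commutator identity $[Y_-,Y_+]=-2\pi m$ gives by induction $Y_-Y_+^d=Y_+^dY_--2\pi m d\,Y_+^{d-1}$, and combining this with the $\JJH$-hypothesis $Y_+Y_-\phi_d=\lapH_m\phi_d=0$ yields
\begin{gather*}
\lapH_m\bigl(Y_+^d(\phi_d)\bigr)
=Y_+^{d+1}Y_-(\phi_d)-2\pi m d\,Y_+^d(\phi_d)
=-2\pi m d\,Y_+^d(\phi_d).
\end{gather*}
The factorization~\eqref{eq:quasi_differential_operator_factorization} then gives $Y_+^D Y_-^D(Y_+^d(\phi_d))=\prod_{j=0}^{D-1}2\pi m(j-d)\cdot Y_+^d(\phi_d)=0$ for every $D\ge d+1$, so $Y_+^d(\phi_d)$ is a completed \Hquasi\ Maass-Jacobi form of depth at most $d+1$.

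The directness of the sum is supplied by the same eigenvalue computation: each summand $Y_+^d(\phi_d)$ lies in the $(-2\pi m d)$-eigenspace of $\lapH_m$, and since $m\ne 0$ these eigenvalues are pairwise distinct for $d=0,1,2,\dots$, so any relation $\sum_{d=0}^{N}Y_+^d(\phi_d)=0$ forces $Y_+^d(\phi_d)=0$ for every $d$. The step I expect to require most care is the depth calculation above; however, once the commutator identity is in hand the computation collapses thanks to $Y_+Y_-\phi_d=0$, and this very eigenvalue calculation doubles as the directness argument, so no additional difficulty is anticipated.
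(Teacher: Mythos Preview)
Your proof is correct and supplies exactly the details the paper leaves implicit: the paper records the corollary as an immediate consequence of Theorem~\ref{thm:Hquasi_structure} without further argument, whereas you verify the reverse inclusion by checking Definition~\ref{def:quasi_maass_jacobi_forms} and establish directness via the $\lapH_m$-eigenvalue computation $\lapH_m\bigl(Y_+^d\phi_d\bigr)=-2\pi m d\,Y_+^d(\phi_d)$. Since both arguments rest on Theorem~\ref{thm:Hquasi_structure}, the approach is essentially the same, with your eigenvalue observation being a tidy way to handle the $\bigoplus$.
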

We end with a final remark.
\begin{remark}
In~\cite{B-F-KacWakimoto}, Folsom and the first author describe a modular completion of characters of $s\ell (m|n)\widehat{\;}$ highest weight modules.  They encounter products of automorphic forms that are in the spirit of Theorem~\ref{thm:Hquasi_structure}.
\end{remark}

{\tit Acknowledgement:} The authors thank Dan Bump and Sander Zwegers for many valuable suggestions.

\bibliographystyle{plain}
\bibliography{Lit}

\end{document}